\newtheorem{thm}{Theorem}[section]
\newtheorem{lem}[thm]{Lemma}
\newtheorem{prp}[thm]{Proposition}
\newtheorem{cor}[thm]{Corollary}
\newtheorem{dfn}[thm]{Definition}
\newtheorem{example}[thm]{Example}
\newtheorem{fact}[thm]{Fact}
\newenvironment{theoremno}
{\begin{trivlist} \item \textbf{Theorem:}~}{\end{trivlist}}
\newtheorem{clm}[thm]{Claim}
\newenvironment{proof}
{\begin{trivlist}  \item \textsc{Proof:}~} {\hfill $\Box$
\end{trivlist}}
\newenvironment{proof of claim}
{\begin{trivlist}  \item \textsc{Proof of Claim:}~} {\hfill $\Box$
\end{trivlist}}
\def \inter{\operatorname{int}}
\def \gr{\operatorname{Graph}}
\def \id{\operatorname{id}}
\def \cl {\operatorname{cl}}
\def \cR{\mathcal R}
\def \cX{\mathcal X}
\def \R{R}
\def \bR{\mathbb{R}}
\def\Ind#1#2{#1\setbox0=\hbox{$#1x$}\kern\wd0\hbox to 0pt{\hss$#1\mid$\hss}
\lower.9\ht0\hbox to 0pt{\hss$#1\smile$\hss}\kern\wd0}
\def\Notind#1#2{#1\setbox0=\hbox{$#1x$}\kern\wd0\hbox to 0pt{\mathchardef
\nn=12854\hss$#1\nn$\kern1.4\wd0\hss}\hbox to
0pt{\hss$#1\mid$\hss}\lower.9\ht0 \hbox to
0pt{\hss$#1\smile$\hss}\kern\wd0}
\def \typ{\operatorname{tp}}
\def \di{disc}
\newcommand{\diam}{Diam}
\def \cl {\mathrm{cl}} \def \gR{\mathbb{R}^{>}} \def \cG{\mathcal G} \def \std{\operatorname{st}}  \def \an{an} \def \ran{\overline{\mathbb{R}}_{\an}} \def \ranex{\overline{\mathbb{R}}_{\an,\exp}}
\begin{document}

\title{On the Topology of Metric Spaces Definable in o-minimal expansions of fields.}
\author{Erik Walsberg}
\maketitle

\begin{abstract}
We study the topology of metric spaces which are definable in o-minimal expansions of ordered fields.
We show that a definable metric space either contains an infinite definable discrete set or is definably homeomorphic to a definable set equipped with its euclidean topology.
This implies that a separable metric space which is definable in an o-minimal expansion of the real field is definably homeomorphic to a definable set equipped with its euclidean topology.
We show that almost every point in a definable metric space has a neighborhood which is definably homeomorphic to an open definable subset of euclidean space.
We also show that over an expansion of the real field, the completion of a definable metric space is definable and characterize the topological dimension of a definable metric space.
\end{abstract}

\section{Introduction}

Throughout this paper $\cR = (R,+, \times, \leqslant,\ldots)$ is an o-minimal expansion of a field.
 By ``definable'' we mean ``$\cR$-definable, possibly with parameters from $R$''.
 We let $e$ be the usual euclidean metric on $R^k$ and refer to $(R^k,e)$ as ``euclidean space''.
The \textbf{euclidean topology} on a definable $Z \subseteq R^k$ is the topology induced by $e$.
A \textbf{definable metric space} is a pair $(X,d)$ consisting of a definable set $X \subseteq R^k$ and a definable $R$-valued metric $d$ on $X$.
That is, $d$ is a definable function $X^2 \rightarrow R^>$ such that for all $x,y,z \in X$:
\begin{enumerate}
\item $d(x,y) = 0$ if and only if $x = y$.
\item $d(x,y) = d(y,x)$.
\item $d(x,z) \leqslant d(x,y) + d(y,z)$.
\end{enumerate}
A \textbf{definable psuedometric} is a pair $(X,d)$ which satisfies (2) and (3) but not necessarily $(1)$.
We associate a definable metric space $(X/\!\sim, d')$ to a definable psuedometric where $x \sim x'$ if and only if $d(x,x') = 0$.
As o-minimal expansions of fields admit elimination of imaginaries the quotient $X/\sim$ may be realized as a definable set.

The main results of this paper are the following basic dichotomy, Theorem~\ref{main} below:
\begin{theoremno}
Let $(X,d)$ be a definable metric space.
Exactly one of the following holds:
\begin{enumerate}
\item There is an infinite definable $A \subseteq X$ such that $(A,d)$ is discrete.
\item There is a definable $Z \subseteq R^k$ and a definable homeomorphism $(X,d) \rightarrow (Z,e)$.
\end{enumerate}
\end{theoremno}
The following genericity result, Theorem~\ref{cor:localhomeo2} below,
\begin{theoremno}
Let $(X,d)$ be a definable metric space.
There is a $d$-open dense subset $U \subseteq X$ such that every $p \in U$ has a $d$-open neighborhood $U$ such that $(U,d)$ is definably homeomorphic to an open subset of euclidean space.
\end{theoremno}
and the following characterization of topological dimension, Theorem~\ref{cor:lasttopdim} below.
\begin{theoremno}
Suppose that $\cR$ expands the real field and that $(X,d)$ is a definable metric space.
The topological dimension of $(X,d)$ is the maximal $l$ for which there is a definable continuous injection $([0,1]^l,e) \rightarrow (X,d)$.
\end{theoremno}
Note that the previous theorem implies that the topological dimension of a definable metric space is no greater then the o-minimal dimension of the underlying definable set.
The next two sections are devoted to examples of definable metric spaces.
We give two kinds of examples.
In Section~\ref{section:families} we give examples of definable metric spaces associated to definable families of sets and functions.
In Section~\ref{section:noeu} we give examples of definable metric spaces which are topologically unlike definable sets.

The present paper is a somewhat modified portion of the authors thesis.
The author thanks his advisor Matthias Aschenbrenner for support and for greatly improving the author's writing.
The mistakes and deficiencies which remain are of course the authors.

\section{Notation, Tame Expansions, and Technical Lemmas}
\subsection{Basic Definitions and Notation}
We denote the o-minimal dimension of a definable set $X$ by $\dim(X)$.
We say that a property holds for all $0 < t \ll 1$ if there is a $\delta > 0$ such that the property holds for all $0 < t < \delta$.
Given a function $f:A \to B$ we let $\gr(f) \subseteq A \times B$ be the graph of $f$.
Throughout, $R^>$ is the set of positive elements of $R$ and $R^\geqslant$ is the set of nonnegative elements of $R$.
Let $X$ be a definable set.
We say that a definable $A \subseteq X$ is \textbf{almost all} of $X$ if $\dim(X \setminus A) < \dim(X)$.
We say that a property holds \textbf{almost everywhere} on $X$ if the property holds on a definable subset of $X$ which is almost all of $X$.

Let $X$ be a topological space.
Given a set $A \subseteq X$ we let $\cl(A)$ be the closure of $A$, let $\inter(A)$ be the interior of $A$, and $\partial(A) := \cl(A) \setminus A$ be the frontier of $A$.
Let $(X,d)$ and $(X',d')$ be definable metric spaces.
Given a definable $A \subseteq X$ we let
$$ \diam_d(A) := \sup\{ d(x,y) : x,y \in A \}. $$
Let $f: X \to X'$ be a map.
We say that $f$ is
\textbf{uniformly continuous} if for some function $g: R^{>}\rightarrow R^{>}$ such that $g(t)\rightarrow 0$ as $t\rightarrow 0^+$ we have:
 $$d'(f(x),f(y))\leqslant g(d(x,y)) \quad\quad \text{for all } x,y \in X.$$  
The map $f$ is a \textbf{uniform equivalence} if it is surjective and it satisfies
$$g_1(d(x,y))\leqslant d'(f(x),f(y))\leqslant g_1(d(x,y)) \quad \text{for all } x,y \in X. $$
for two functions $g_1,g_2 : R^> \rightarrow R^>$ such that $g_1(t),g_2(t) \rightarrow 0$ as $t \rightarrow 0^+$.
Uniform equivalences are homeomorphisms.
We say that $f$ is \textbf{distance decreasing}
if
$$ d'(f(x),f(y) \leqslant d(x,y) \quad \text{for all } x,y \in X. $$
A \textbf{path} in $X$ is a definable function $\gamma: R^> \rightarrow X$.
Note that a path in $X$ need not be continuous with respect to the $d$-topology on $X$.
The path $\gamma$ in $X$ \textbf{converges} to $x \in X$ if for every $\epsilon>0$ there is a $\delta>0$ such that if
$0<t<\delta$ then $d(\gamma(t),x)<\epsilon$.
We also say that $\gamma$ has limit $x$ as $t \rightarrow 0^+$.
The limit of $\gamma$ is unique, provided that it exists.
A path in $X$ converges if it converges to some $x\in X$.
\subsection{Lemmas}
We give a few easy technical lemmas which will prove useful.
\setcounter{thm}{0}\begin{lem}\label{lem:c}
Let $X$ be a definable set and let $f: X \rightarrow R^{>}$ be a definable function.
There is an open $V \subseteq X$ and a $\delta > 0$ such that $f(x) > \delta$ for all $x \in V$.
\end{lem}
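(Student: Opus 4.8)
The plan is to use o-minimal cell decomposition together with a dimension-drop argument. First I would apply cell decomposition to $X$ compatibly with $f$, so that $X$ is partitioned into finitely many cells on each of which $f$ is continuous. Since these cells are finite in number, at least one cell $C$ satisfies $\dim(C) = \dim(X)$, and among those, pick one that is open in $X$ if $X$ has an open cell of full dimension; in general, what I really want is a cell $C$ that is \emph{open} in $R^k$ after possibly passing to a suitable coordinate projection, i.e.\ I want an honest open subset of euclidean space sitting inside $X$. The cleanest route: take any cell $C$ in the decomposition with $\dim C = \dim X =: n$; then $C$ is definably homeomorphic (via a coordinate projection $\pi$) to an open cell $C' \subseteq R^n$, and $f \circ \pi^{-1}$ is continuous on $C'$.

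Next, on the open cell $C' \subseteq R^n$ I would pick any point $p \in C'$; since $f \circ \pi^{-1}$ is continuous and positive at $p$, there is a euclidean ball $B \subseteq C'$ around $p$ and a $\delta > 0$ with $f \circ \pi^{-1}(y) > \delta$ for all $y \in B$. Pulling back, $V := \pi^{-1}(B)$ is open in $C$ — but I need it open in $X$. Here is where one must be slightly careful: an open subset of a cell need not be open in $X$. However, if the decomposition is taken so that the top-dimensional cells that are open in $R^k$ exist precisely when $\dim X = k$, the general case ($\dim X < k$) still works because "open" in the statement should be read as open in $X$ with its euclidean topology, and a full-dimensional cell $C$ \emph{is} open in $X$ when $C$ is one of the pieces and all other pieces have dimension $\le \dim X$ — wait, that is not automatic either. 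The correct statement: among the cells, choose $C$ with $\dim C = \dim X$; then $C$ is open in $\cl(C) \cap X$ minus lower-dimensional pieces, and more to the point, $X \setminus C$ has cells, finitely many, and the union $Y$ of all cells of $X$ other than $C$ is a definable set; $C$ need not avoid $\cl(Y)$. So instead I would argue: the set of points of $X$ that have a euclidean-open-in-$X$ neighborhood is itself definable and, I claim, dense and of full dimension, because locally near a generic point of a top-dimensional cell $X$ coincides with that cell. Concretely, by the cell decomposition theorem one may arrange that some cell $C$ of dimension $n = \dim X$ is \emph{open in $X$}: indeed, take the decomposition, let $C$ be a cell of dimension $n$, and note $X \setminus \bigcup(\text{other cells of dim} < n)$; after removing the frontier of the lower cells — itself of dimension $< n$ — what remains of $C$ is open in $X$. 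Shrinking $C$ to this open-in-$X$ piece and keeping $f$ continuous there, I then proceed as above.

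Then the remaining step is routine: on an open-in-$X$ cell $C$ where $f$ is continuous, fix $p \in C$, set $\delta := f(p)/2 > 0$, and use continuity of $f$ at $p$ together with positivity to find an open-in-$C$ (hence open-in-$X$) neighborhood $V$ of $p$ with $f > \delta$ on $V$. This gives the conclusion.

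The main obstacle is the bookkeeping in the previous paragraph: ensuring that the open set $V$ one produces inside a cell is genuinely open in $X$ (and not merely open in the cell), which requires choosing the cell decomposition so that a full-dimensional cell is open in $X$ after deleting the lower-dimensional frontier material. Everything else — continuity of $f$ on cells, positivity giving the threshold $\delta$ — is immediate from o-minimality. An alternative that sidesteps the issue entirely: replace "$f$ is continuous on cells" by "$f$ is continuous almost everywhere on $X$" (a standard consequence of cell decomposition) and observe that the set $\{x \in X : f(x) > 1/n\}$ is definable for each $n \in \N$; since these sets increase to $X$, one of them, say $\{f > 1/n_0\}$, has dimension $\dim X$, hence has interior in $X$ (as any full-dimensional definable set does, after intersecting with a suitable cell), and that interior together with $\delta = 1/n_0$ works.
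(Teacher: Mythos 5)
Your main argument is correct and is essentially the paper's proof: the paper simply takes a point $p$ at which $f$ is continuous (generic continuity, which your cell-decomposition bookkeeping — deleting the closure of the other cells from a top-dimensional cell to get a set open in $X$ — justifies) and lets $V$ be a neighborhood on which $f > \frac{1}{2}f(p)$. One caveat on your final ``alternative'': the sets $\{x \in X : f(x) > 1/n\}$ for $n \in \N$ need not exhaust $X$ when $R$ is non-archimedean (where $f$ may take infinitesimal positive values), so there you should instead use $\{f > t\}$ for $t \in R^>$ and note that its dimension equals $\dim X$ for all sufficiently small $t > 0$.
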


\begin{proof}
Let $p \in X$ be a point at which $f$ is continuous.
Let $V$ be a definable neighborhood of $p$ such that $f(x) > \frac{1}{2}f(p)$ for all $x \in V$.
\end{proof}
\begin{lem}\label{lem:e}
Let $X$ be a definable set.
Let $A \subseteq X \times R^{>}$ be a definable set such that $X \times \{0\} \subseteq \cl(A)$.
There is an open $V \subseteq X$ and a $\delta > 0$ such that $V \times (0, \delta) \subseteq A$.
\end{lem}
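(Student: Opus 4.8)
The plan is to reduce the statement to Lemma~\ref{lem:c} by a fibrewise argument combined with definable choice. First I would consider, for each $x \in X$, the fibre $A_x := \{t \in R^> : (x,t) \in A\}$. The hypothesis $X \times \{0\} \subseteq \cl(A)$ says precisely that $0 \in \cl(A_x)$ for every $x \in X$; since $\cR$ is o-minimal, $A_x \subseteq R^>$ is a finite union of points and intervals, so $0 \in \cl(A_x)$ forces $A_x$ to contain an interval $(0,\epsilon_x)$ for some $\epsilon_x > 0$. Define $f : X \to R^>$ by letting $f(x)$ be the supremum of all $\epsilon \leqslant 1$ such that $(0,\epsilon) \subseteq A_x$ (truncating at $1$ just to keep the value finite); this $f$ is definable and everywhere positive.

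Next I would apply Lemma~\ref{lem:c} to $f$ to obtain an open $V \subseteq X$ and a $\delta > 0$ with $f(x) > \delta$ for all $x \in V$. By construction $f(x) > \delta$ implies $(0,\delta) \subseteq (0,f(x)) \subseteq A_x$, i.e. $\{x\} \times (0,\delta) \subseteq A$. Shrinking $\delta$ if necessary so that $\delta < 1$ is harmless. Taking the union over $x \in V$ gives $V \times (0,\delta) \subseteq A$, which is exactly the conclusion.

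The only point requiring care — and the step I expect to be the main obstacle — is verifying that the fibrewise description really does yield the desired inclusion uniformly, i.e. that ``$(0,\epsilon) \subseteq A_x$'' assembles into a definable condition and that the supremum defining $f(x)$ is attained as an honest positive value rather than, say, being $0$ for some $x$. The first is immediate from definability of $A$ and uniform finiteness in o-minimal structures; the second follows from the o-minimal cell decomposition of $A_x$: a finite union of points and open intervals whose closure contains $0$ must include an interval with left endpoint $0$. Everything else is a routine application of Lemma~\ref{lem:c}.
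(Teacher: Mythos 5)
There is a genuine gap, and it sits at the very first step. You assert that the hypothesis $X \times \{0\} \subseteq \cl(A)$ ``says precisely that $0 \in \cl(A_x)$ for every $x \in X$.'' This is false: the closure is taken in the ambient product, not fibrewise, so $(x,0)$ may be approximated by points $(x',t') \in A$ with $x' \neq x$, and the fibre $A_x$ itself can fail to accumulate at $0$ — it can even be empty. For instance, with $X = R$ and $A = (R \setminus \{0\}) \times R^{>}$ we have $X \times \{0\} \subseteq \cl(A)$ but $A_0 = \emptyset$; with $X = R^2$ and $A = (R \times (R \setminus \{0\})) \times R^{>}$ the set of bad fibres is a whole line. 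Consequently your function $f$ is not everywhere positive, so it is not a function $X \to R^{>}$ and Lemma~\ref{lem:c} cannot be applied to it on all of $X$. This is not a cosmetic issue: the possibility that the fibrewise condition fails at some points is exactly the content of the lemma beyond Lemma~\ref{lem:c} — if the hypothesis really did localize to every fibre, the lemma would be the trivial corollary you describe.

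The argument can be repaired, and the repair is essentially the paper's proof. Define $g(x) = \sup\{t \in (0,1) : \{x\} \times (0,t) \subseteq A\}$ (your $f$, with $g(x)=0$ when no such $t$ exists). One must show that $\{g = 0\}$ cannot contain a nonempty open subset $U$ of $X$: on such a $U$, o-minimality of each fibre gives $0 \notin \cl(A_x)$, i.e. $\inf A_x > 0$ for all $x \in U$ (your cell-decomposition observation, used in the contrapositive), and applying Lemma~\ref{lem:c} to $x \mapsto \inf A_x$ on $U$ produces an open $V \subseteq U$ and $\delta > 0$ with $A \cap (V \times (0,\delta)) = \emptyset$, contradicting $V \times \{0\} \subseteq \cl(A)$. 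By o-minimal dimension theory it follows that $\{g > 0\}$ contains a nonempty open subset of $X$, and only there may one apply Lemma~\ref{lem:c} to $g$ to conclude. Note also that even after this fix the conclusion is obtained on \emph{some} open $V \subseteq X$, not on an open set of your choosing — which is all the lemma claims.
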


\begin{proof}
Let $g: X \rightarrow R$ be given by
$$ g(x) = \sup\{ t \in (0,1) : \{x\} \times (0,t) \subseteq A\} \quad \text{for all } x \in X. $$
If $U \subseteq X$ is open and $g(x) = 0$ for all $x \in U$ then $U \times \{0\}$ does not lie in the closure of $A$.
Thus there is an open $U \subseteq X$ such that $g(x) > 0$ for all $x \in U$.
The lemma follows by applying Lemma~\ref{lem:c}.
\end{proof}
Let $A, B \subseteq R^k$ be definable.
The \textbf{Hausdorff distance} $d_H$ between $A$  and $B$ is the infimum of all $\delta \in R^> \cup\{ \infty\}$ such that for every $a \in A$ there is a $b \in B$ satisfying $\| a - b \| \leqslant \delta$ and for all $b \in B$ there is a $a \in A$ such that $\| a - b \| \leqslant \delta$.
\begin{lem}\label{lem:dez}
Let $\{ A_t  : t \in R^> \}$ be a definable family of subsets of $[0,1]^k$, let $A$ be the set of $(x,t) \in [0,1]^k \times R^\geqslant$ such that $x \in A_t$ and let
$$ A_0 = \{ x \in [0,1]^k : (x,0) \in \cl(A)\}.$$
Then $A_t$ converges in the Hausdorff metric to $A_0$ as $t \rightarrow 0^+$. 
\end{lem}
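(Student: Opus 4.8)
The plan is to show the two required Hausdorff-distance conditions separately, exploiting o-minimal compactness/definability to turn ``for all small $t$'' statements into uniform ones. Fix $\varepsilon > 0$; we must show $d_H(A_t, A_0) \leqslant \varepsilon$ for all $0 < t \ll 1$. There are two directions: (a) every point of $A_t$ is within $\varepsilon$ of $A_0$, and (b) every point of $A_0$ is within $\varepsilon$ of $A_t$.

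For direction (a), consider the definable set $B$ of pairs $(x,t) \in [0,1]^k \times R^>$ such that $x \in A_t$ but $\operatorname{dist}(x, A_0) > \varepsilon$, where $\operatorname{dist}$ is euclidean distance to the (closed, by definition) set $A_0$. I claim $B$ does not accumulate on $[0,1]^k \times \{0\}$: if $(x_0,0) \in \cl(B)$, then since $B \subseteq A$ we would have $(x_0,0) \in \cl(A)$, so $x_0 \in A_0$, contradicting that $\operatorname{dist}(\cdot,A_0) > \varepsilon$ is an open condition stable under limits away from $A_0$ — more carefully, $\operatorname{dist}(x_0,A_0)\geqslant\varepsilon$ would follow by continuity of $\operatorname{dist}(\cdot,A_0)$, contradicting $x_0\in A_0$. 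Hence $\cl(B) \cap ([0,1]^k \times \{0\}) = \varnothing$. Now project: the set $\{ t \in R^> : (\exists x)\,(x,t) \in B\}$ is definable, and if $0$ were in its closure then by definable choice (o-minimal curve selection) there would be a path $t \mapsto (x(t),t) \in B$ with $t \to 0^+$; by compactness of $[0,1]^k$ and o-minimality $x(t)$ converges to some $x_0 \in [0,1]^k$, giving $(x_0,0) \in \cl(B)$, a contradiction. So there is $\delta_1 > 0$ with $B \cap ([0,1]^k \times (0,\delta_1)) = \varnothing$, which is exactly direction (a) for $t < \delta_1$.

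For direction (b), I want: for all small $t$, every $x \in A_0$ has a point of $A_t$ within $\varepsilon$. Cover $A_0$ by finitely many $\varepsilon/2$-balls $B(c_1,\varepsilon/2),\dots,B(c_N,\varepsilon/2)$ centered at points $c_i \in A_0$ — possible since $A_0$ is a closed, hence compact, definable subset of $[0,1]^k$ (one can extract a finite subcover by o-minimality, e.g. via the fact that a definable family of balls covering a definable compact set has a finite subcover, or by a direct cell-decomposition argument). For each $i$, since $c_i \in A_0$ we have $(c_i,0) \in \cl(A)$, so the definable set $A^{(i)} := \{ t \in R^> : A_t \cap B(c_i,\varepsilon/2) \neq \varnothing \}$ has $0$ in its closure; being a definable subset of $R^>$, it therefore contains an interval $(0,\delta_i)$. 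Put $\delta_2 = \min_i \delta_i > 0$. Then for $0 < t < \delta_2$: given $x \in A_0$, pick $i$ with $\|x - c_i\| < \varepsilon/2$ and pick $y \in A_t \cap B(c_i,\varepsilon/2)$; then $\|x - y\| < \varepsilon$, giving direction (b). Taking $\delta = \min(\delta_1,\delta_2)$ completes the argument, and letting $\varepsilon \to 0$ gives convergence.

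The main obstacle is the quantifier-over-$x$ issue in direction (a): reducing ``no $x \in A_t$ is $\varepsilon$-far from $A_0$, for all small $t$'' to a statement about a definable subset of $R^>$ requires ruling out a ``bad'' path escaping to the boundary, and this is where compactness of $[0,1]^k$ together with o-minimal curve selection (definable choice) is essential; without the compactness hypothesis the cube was built into the statement, a path in $B$ could fail to converge. I expect direction (b) to be routine once the finite subcover of the compact definable set $A_0$ is in hand. All the ``definable subset of $R^>$ with $0$ in its closure contains an interval'' steps are immediate from o-minimality.
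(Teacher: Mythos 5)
Your direction (a) is sound and is essentially the paper's own argument for its case (2), repackaged: you rule out a ``bad'' definable set accumulating on $[0,1]^k\times\{0\}$ via definable choice, boundedness of the cube, and the $1$-Lipschitz continuity of $x\mapsto \operatorname{dist}(x,A_0)$. (The degenerate case $A_0=\emptyset$ should be dispatched first — then the same curve-selection argument shows $A_t=\emptyset$ for all small $t$ — but that is cosmetic.)

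Direction (b), however, has a genuine gap. The lemma is stated for an arbitrary o-minimal expansion of a field $R$ (and is used in that generality: it feeds into Lemma~\ref{lem:properhau} and hence into the proof of Theorem~\ref{main}), so $\varepsilon$ may be an infinitesimal element of a non-archimedean $R$. In that situation the finite $\varepsilon/2$-net of $A_0$ you invoke simply does not exist: already $A_0=[0,1]$ cannot be covered by finitely many intervals of infinitesimal length, since $n+1$ points spaced $1/n$ apart can meet $n$ such intervals in at most $n$ points. The ``fact'' that a definable family of balls covering a closed bounded definable set admits a finite subcover is false in general o-minimal structures — this failure is precisely why definable compactness is formulated via curves rather than covers. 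The repair is to run direction (b) in the same style as your direction (a), which is what the paper does in its case (1): if for arbitrarily small $t$ some point of $A_0$ were $\varepsilon$-far from $A_t$, definable choice yields a path $\gamma$ in $A_0$ with $\operatorname{dist}(\gamma(t),A_t)>\varepsilon$ for all small $t$; since $A_0$ is closed and bounded, $\gamma$ converges to some $\gamma_0\in A_0$, whence $\operatorname{dist}(\gamma_0,A_t)>\varepsilon/2$ for all small $t$; but $(\gamma_0,0)\in\cl(A)$ produces a point of some $A_s$ with $s$ small within $\varepsilon/2$ of $\gamma_0$, a contradiction. With that substitution your proof goes through over any $R$; as written it is only valid when $\cR$ expands the real field.
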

\begin{proof}
We show that $d_H(A_t, A_0) \rightarrow 0$ as $t \rightarrow 0^+$.
Suppose otherwise.
Then there is a $\delta \in R^>$ such that $d_H(A_t, A_0) > \delta$ when $t$ is sufficiently small.
Then one of the following must hold:
\begin{enumerate}
\item[(1)] If $0 < t \ll 1$ then there is a $x \in A_0$ such that $\| x - y\| > \delta$ for all $y \in A_t$.
\item[(2)] If $0 < t \ll 1$ then there is a $y \in A_t$ such that $\| y - x \| > \delta$ for all $x \in A_0$.
\end{enumerate}
Suppose that $(1)$ holds.
After replacing $\delta$ with a smaller element of $R^>$ if necessary and applying definable choice we let $\gamma$ be a path in $A_0$ such that:
$$ \| \gamma(t) - y \| > \delta \quad \text{for all } 0 < t < \delta, y \in A_t.$$
As $A_0$ is a closed subset of $[0,1]^k$, $\gamma$ must have a limit $\gamma_0$ in $A_0$ as $t \rightarrow 0^+$.
If $\| \gamma(t) - \gamma_0 \| < \frac{\delta}{2}$ then the triangle inequality implies that if $0 < t < \delta$ then $\| \gamma_0 - y \| > \frac{\delta}{2}$ holds for all $y \in A_t$.
As $\gamma_0$ lies in the closure of $A$ there is a $y \in A$ such that $\| y - \gamma_0\| < \frac{\delta}{2}$.
Then $y \in A_t$ for $0 < t < \frac{\delta}{2}$.
This gives a contradiction.
A similar argument produces a contradiction from $(2)$.
\end{proof}

\subsection{Tame Expansions}
We make use of the theory of tame pairs of o-minimal structures.
We recall the necessary ingredients here.
We assume that the language of $\cR$ contains a symbol for every definable function $R \rightarrow R$.
This ensures that $\cR$ admits quantifier elimination.
Let $\mathbb{K} = (K,+,\times,\ldots)$ be an elementary extension of $\cR$.
If for every $b \in K$ the set $\{ a \in R: a \leqslant b \}$ has a supremum in $R \cup \{ -\infty, \infty \}$ then we say that $\mathbb{K}$ is \textbf{tame extension} of $\cR$ and we say that $\cR$ is a \textbf{tame substructure} of $\mathbb{K}$.
We assume that all tame extension are nontrivial extensions, so as not to consider $\cR$ to be a tame elementary extension of itself.
If $\mathbb{K}$ is a tame elementary extension of $\cR$ then we define a standard part map $\std: \mathbb{K} \cup \{-\infty,\infty\} \rightarrow \cR \cup \{-\infty, \infty\}$ by
$$ \std(b) = \sup\{ a \in R : a < b\} .$$
The study of tame extensions began with the following theorem \cite{Marker-Steinhorn}:
\begin{thm}[Marker-Steinhorn]
The following are equivalent:
\begin{enumerate}
\item $\mathbb{K}$ is a tame extension of $\cR$,
\item Every type over $\cR$ realized in $\mathbb{K}$ is definable over $\cR$.
\end{enumerate}
\end{thm}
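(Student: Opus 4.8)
We sketch the argument; direction (2)$\Rightarrow$(1) is short, so we treat it first. Let $b\in K$. Since $\typ(b/R)$ is definable over $R$, the set $\{a\in R:b\leqslant a\}$ is definable over $R$, and being upward closed it is, by o-minimality, one of $\emptyset$, $R$, $(c,\infty)$, $[c,\infty)$ for some $c\in R$. Its complement $\{a\in R:a\leqslant b\}$ is therefore one of $R$, $\emptyset$, $(-\infty,c]$, $(-\infty,c)$, so $\sup\{a\in R:a\leqslant b\}$ exists in $R\cup\{-\infty,\infty\}$. Hence $\mathbb{K}$ is tame.

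For (1)$\Rightarrow$(2), assume $\mathbb{K}$ is tame, fix $\bar b\in K^n$, and fix an $R$-formula $\varphi(\bar x;\bar y)$; we must show $D:=\{\bar a\in R^{|\bar y|}:\mathbb{K}\models\varphi(\bar b;\bar a)\}$ is definable over $R$. The engine is o-minimal cell decomposition: a cell decomposition of $R^{n+|\bar y|}$ adapted to $\varphi$ uses only parameters from $R$ and, interpreted in $\mathbb{K}$, still partitions $K^{n+|\bar y|}$ into cells on each of which $\varphi$ has constant truth value. Take first $|\bar y|=1$, and let $C_1,\ldots,C_M$ be the ``true'' cells of $R^{n+1}$, each lying over a base cell $B_j\subseteq R^n$, so that $D=\bigcup_j\{a\in R:(\bar b,a)\in C_j^{\mathbb{K}}\}$. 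Whether $\bar b\in B_j^{\mathbb{K}}$ is a fixed truth value not involving $a$. If $C_j$ is the graph of an $R$-definable $g_j$ over $B_j$, then $\{a\in R:(\bar b,a)\in C_j^{\mathbb{K}}\}$ is $\{g_j^{\mathbb{K}}(\bar b)\}$ or $\emptyset$ according as $g_j^{\mathbb{K}}(\bar b)\in R$ or not, hence definable over $R$. If $C_j$ is the band between $R$-definable walls $\alpha_j<\beta_j$ over $B_j$, apply tameness to the element $\alpha_j^{\mathbb{K}}(\bar b)\in K$: the downward closed set $\{a\in R:a\leqslant\alpha_j^{\mathbb{K}}(\bar b)\}$ has a supremum $c$ in $R\cup\{-\infty,\infty\}$, hence equals $\emptyset$, $R$, $(-\infty,c)$, or $(-\infty,c]$, and is thus definable over $R$; so $\{a\in R:\alpha_j^{\mathbb{K}}(\bar b)<a\}$ is definable over $R$, and likewise $\{a\in R:a<\beta_j^{\mathbb{K}}(\bar b)\}$. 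Intersecting these and taking the union over $j$ shows $D$ is definable over $R$.

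For $|\bar y|=m>1$ one would like to peel off $y_m$, apply the case $m=1$ with object tuple $(\bar b,a_1,\ldots,a_{m-1})$, and glue over $(a_1,\ldots,a_{m-1})\in R^{m-1}$; but this needs maps of the form $(a_1,\ldots,a_{m-1})\mapsto\std\bigl(h^{\mathbb{K}}(\bar b,a_1,\ldots,a_{m-1})\bigr)$, for $R$-definable $h$, to be definable over $R$, which is again an $m$-variable statement, so a bare induction on $m$ does not close. The fix is to prove, by induction on $m$ and \emph{simultaneously} with definability of $m$-types, the stronger assertion that for every $R$-definable $h\colon R^{n+m}\to R$ and every $\bar b\in K^n$ the map $\bar a\mapsto\std(h^{\mathbb{K}}(\bar b,\bar a))$, $R^m\to R\cup\{-\infty,\infty\}$, is definable over $R$. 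In the inductive step one decomposes $h$ into finitely many pieces, separated by $R$-definable walls $\theta$ in its first $n+m-1$ coordinates, on which $h$ is continuous and monotone in its last coordinate; by the case $m=1$ (and tameness) the membership of $a_m$ in a given piece is a condition on $(\bar a',a_m)$ definable over $R$, its endpoints $\std(\theta^{\mathbb{K}}(\bar b,\bar a'))$ being definable in $\bar a'$ by the $(m-1)$-case, and on each piece $\std(h^{\mathbb{K}}(\bar b,\bar a',a_m))$ is pinned down by the one-sided limits of $h$ along its last coordinate, again functions handled by the $(m-1)$-case. Running this over all formulas $\varphi$ yields that $\typ(\bar b/R)$ is definable over $R$.

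The crux --- and the part I expect to be genuinely delicate --- is this simultaneous induction: everything rests on showing that the standard part of an o-minimal definable function, evaluated at a tuple mixing the nonstandard $\bar b$ with standard arguments, is itself a definable function of those standard arguments, i.e.\ that $\std$ respects the monotone-cell structure of definable functions. This is precisely where tameness is used in an essential rather than bookkeeping way, and carrying it out --- handling infinite values, the several monotonicity patterns, and keeping the two inductions synchronized --- is where the real work lies.
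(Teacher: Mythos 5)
The paper does not prove this theorem; it is imported verbatim from the cited Marker--Steinhorn reference, so there is no in-paper argument to compare yours against. Judged on its own terms, your direction (2)$\Rightarrow$(1) is complete and correct, and your treatment of (1)$\Rightarrow$(2) in the case $|\bar y|=1$ is also correct: decomposing into graphs and bands over $R$-definable walls and using tameness to cut each wall value $\alpha_j^{\mathbb{K}}(\bar b)$ against $R$ is exactly the right move, and it does yield $R$-definability of the fiber.

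The gap is that for $|\bar y|=m>1$ you describe the proof rather than give it. You correctly diagnose that a bare induction on $m$ does not close and that one must strengthen the inductive hypothesis to: for every $R$-definable $h\colon R^{n+m}\to R$ and every $\bar b\in K^n$, the map $\bar a\mapsto\std\bigl(h^{\mathbb{K}}(\bar b,\bar a)\bigr)$ is definable over $R$. That is indeed the crux of Marker--Steinhorn, and your proposed mechanism (monotone/cell decomposition of $h$ in its last coordinate, with the walls and one-sided limits handled by the $(m-1)$-case) is the right architecture. But the step you defer --- verifying that the one-sided limits of $\std(h^{\mathbb{K}}(\bar b,\bar a',\cdot))$ coincide with $\std$ of the corresponding $R$-definable limit functions, uniformly and definably in $\bar a'$, including the cases where values or limits are infinite or where $\std$ collapses an interval to a point --- is precisely where the theorem's content lives, and it is not routine: one must check, for instance, that $\std$ and $\lim$ commute along the monotone pieces, which again uses tameness and is not a formal consequence of the $(m-1)$-case as stated. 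As written, the proposal is a correct and well-organized plan of attack with the central lemma asserted but unproved; you acknowledge this yourself, so I flag it as an honest but genuine incompleteness rather than an error of conception.
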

There are two particularly important kinds of tame extensions with which we are concerned.
First, if $\cR$ expands the ordered field of real numbers than every elementary extension of $\cR$ is tame.
Second, let $\zeta$ be a positive element of an elementary extension of $\cR$ which is less then every positive element of $\cR$.
Then $\cR(\zeta)$, the prime model over $\zeta$, is a tame elementary extension of $\cR$.
Indeed, as $\cR$ admits definable Skolem functions, every element of $\cR(\zeta)$ is of the form $f^*(\zeta)$ for some $\cR$-definable function $f: R^> \rightarrow R$ and in this case:
$$ \std f^*(\zeta) = \lim_{ t \rightarrow 0^+} f(t). $$
A \textbf{tame pair} is the expansion of $\mathbb{K}$ by a predicate defining $R$, denoted by $(\mathbb{K}, R)$.
The theory of tame pairs is complete:

\begin{thm}[van den Dries - Lewenberg]
Let $\mathbb{K},\mathbb{K}'$ be tame extensions of $\cR$.
Then $(\mathbb{K},R)$ and $(\mathbb{K}',R)$ are elementarily equivalent.
\end{thm}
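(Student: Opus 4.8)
The plan is to derive this completeness statement from a back-and-forth argument whose engine is the Marker--Steinhorn theorem stated above. To show $(\mathbb{K},R) \equiv (\mathbb{K}',R)$ it suffices to produce an isomorphism after passing to saturated elementary extensions, so first I would replace each pair by a $\kappa$-saturated elementary extension $(\widehat{\mathbb{K}}, \widehat R)$ and $(\widehat{\mathbb{K}}', \widehat R')$ of the same large cardinality $\kappa$. Since the pair axioms force the predicate to name an elementary substructure which is itself a model of $\operatorname{Th}(\cR)$, the small parts satisfy $R \preceq \widehat R$ and $R \preceq \widehat R'$; in particular $\widehat R \equiv \widehat R' \equiv \cR$. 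A type over a small subset of $\widehat R$ in the language of $\cR$ can be read as a partial type in the pair language asserting that its realization lies in the predicate, so $\kappa$-saturation of the pairs makes $\widehat R$ and $\widehat R'$ $\kappa$-saturated $\cR$-structures of cardinality $\kappa$; being saturated models of a complete theory of the same cardinality, they are isomorphic. Fix an isomorphism $\iota : \widehat R \to \widehat R'$ to serve as the base of the back-and-forth.

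I would then build a nonempty system $\mathcal{I}$ of partial maps with the back-and-forth property, each member an $\cR$-elementary partial map $\sigma$ from $\widehat{\mathbb{K}}$ to $\widehat{\mathbb{K}}'$ that extends $\iota$, that preserves the predicate (an element of its domain lies in $\widehat R$ if and only if its image lies in $\widehat R'$), and that commutes with the standard part map. The base map $\iota$ lies in $\mathcal{I}$ because $\widehat R$ and $\widehat R'$ are elementary substructures of the respective big models. For the forth step, given $\sigma \in \mathcal{I}$ whose domain contains $\widehat R$ together with finitely many further elements, and a new $b \in \widehat{\mathbb{K}}$, the Marker--Steinhorn theorem guarantees that the type of $b$ over $\widehat R$, after absorbing the finitely many big-model parameters already present, is definable over $\widehat R$. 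Transporting this definable type across $\iota$ yields a consistent type over $\widehat R'$, realized in the $\kappa$-saturated $\widehat{\mathbb{K}}'$ by some $b'$; this $b'$ is the required image of $b$.

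The crux, and the step I expect to be the main obstacle, is verifying that $\sigma \cup \{(b,b')\}$ again preserves the predicate and the standard part, i.e. that matching the pure $\cR$-type together with standard-part data already matches every formula of the pair language. This amounts to a relative quantifier elimination: every formula of the language expanded by the unary predicate for $R$ should be equivalent, modulo the theory of tame pairs, to a Boolean combination of $\cR$-formulas and formulas of the form ``$f(\bar x) \in R$'' for $\cR$-definable $f$. The mechanism is the standard part map: for an $\cR$-definable $f$, whether $f(\bar x)$ is finite and what cut its standard part fills is controlled by the $\cR$-type of $\bar x$, and Marker--Steinhorn makes the trace of the predicate on any definable family definable over $\widehat R$; thus the conditions ``$f(\bar x) \in R$'' are already decided by the transported definable type. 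Carrying out this reduction rigorously is the technical heart of the argument.

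Finally, the back step is symmetric, so $\mathcal{I}$ has the back-and-forth property; a union along an enumeration of both big models produces an isomorphism of pairs, giving $(\widehat{\mathbb{K}}, \widehat R) \cong (\widehat{\mathbb{K}}', \widehat R')$ and hence $(\mathbb{K},R) \equiv (\mathbb{K}',R)$. An alternative route, which supplies exactly the relative quantifier elimination needed above, is to recognize tame pairs as the T-convex valuation pairs whose residue field is the small model and to invoke van den Dries--Lewenberg's quantifier elimination relative to the residue field; I would use whichever of the two is more convenient for the surrounding development.
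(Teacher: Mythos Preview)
The paper does not prove this theorem: it is stated without proof and attributed to van den Dries and Lewenberg, to be used as a black box (alongside the companion result, Theorem~\ref{thm:tconbassss}, on quantifier elimination for $T$-convex structures). So there is no ``paper's own proof'' to compare your attempt against.

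That said, your sketch is a reasonable outline of how the original result is obtained. Van den Dries and Lewenberg work in the $T$-convex setting and prove quantifier elimination (and hence completeness) by a substructure-completeness/back-and-forth argument whose key ingredient is exactly the Marker--Steinhorn definability of types over the small model; the tame-pair formulation is then interdefinable with the $T$-convex one once one identifies the small model with the residue field via the standard part map. Your ``alternative route'' at the end is thus not really an alternative but the same argument in its natural home. One caution about your main line: the step where you claim that matching the definable $\cR$-type over $\widehat R$ already decides all formulas of the form ``$f(\bar x)\in R$'' is precisely the content of the relative quantifier elimination you defer, and it does not follow automatically from Marker--Steinhorn alone; in the original proof this is handled by carefully controlling the valuation/standard-part data during the back-and-forth, not as a corollary after the fact. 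If you intend to supply a self-contained proof here rather than cite the result, that step needs to be made explicit.
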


Recall that the convex hull of a subfield of an ordered field is a subring, and that any convex subring of an ordered field is a valuation subring.
We say that a convex subring $\mathcal{O} \subseteq K$ is \textbf{T-convex} if it is the convex hull of a tame substructure of $\mathbb{K}$.
For the remainder of this section $\mathcal{O}$ is the convex hull of $R$ in $K$.
The expansion of $\mathbb{K}$ by a predicate defining $\mathcal{O}$ is called a \textbf{T-convex structure} and denoted $(\mathbb{K},\mathcal{O})$.
Let $\mathfrak{m}$ be the maximal ideal of $\mathcal{O}$ and let $\Gamma$ be the value group of the associated valuation.
It follows from the definition of a tame expansion that $b - \std(b) \in \mathfrak{m}$ for all $b \in \mathcal{O}$.
We therefore identify the residue field of $(\mathbb{K}, \mathcal{O})$ with $R$ and the residue map with $\std$.

\begin{thm}[van den Dries - Lewenberg]\label{thm:tconbassss}
The structure $(\mathbb{K},\mathcal{O})$ admits elimination of quantifiers and is universally axiomatizable.
\end{thm}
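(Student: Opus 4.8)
This is the theorem of van den Dries and Lewenberg, so the plan is to recall the shape of their argument. Write $T$ for the $\mathcal{L}$-theory of $\cR$ (complete, quantifier-eliminating, and universally axiomatized by our standing assumptions) and write $T_{\mathrm{convex}}$ for the $\mathcal{L}\cup\{\mathcal{O}\}$-theory axiomatized by $T$ together with the statements that $\mathcal{O}$ is a convex subring containing $1$ and is closed under every continuous $\emptyset$-definable function; by the fact recalled above (a convex subring of an ordered field is a valuation ring), the models of $T_{\mathrm{convex}}$ are precisely the $T$-convex valued fields. Universal axiomatizability is the easy half: $T$ is universal, ``$\mathcal{O}$ is a subring'' and ``$\mathcal{O}$ is convex'' are plainly universal, and closure under a continuous $\emptyset$-definable $f$ is the universal sentence $\forall x\,(x\in\mathcal{O}\rightarrow f(x)\in\mathcal{O})$ --- here one uses quantifier elimination for $T$ to see that such an $f$ is piecewise an $\mathcal{L}$-term, so this is a genuine $\mathcal{L}\cup\{\mathcal{O}\}$-sentence.

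For quantifier elimination I would use the standard embedding test: it suffices to show that if $(\mathbb{K}_1,\mathcal{O}_1)$ and $(\mathbb{K}_2,\mathcal{O}_2)$ are models of $T_{\mathrm{convex}}$ with $(\mathbb{K}_2,\mathcal{O}_2)$ sufficiently saturated, $A$ is a common substructure, and $\iota\colon A\hookrightarrow(\mathbb{K}_2,\mathcal{O}_2)$ is the inclusion, then $\iota$ extends to an embedding $(\mathbb{K}_1,\mathcal{O}_1)\hookrightarrow(\mathbb{K}_2,\mathcal{O}_2)$. First I would enlarge $A$ to be a model of $T$: replacing $A$ by $\dcl_{\mathcal{L}}(A)$ computed inside $\mathbb{K}_1$, one checks that $\mathcal{O}_1\cap\dcl_{\mathcal{L}}(A)$ is again $T$-convex in the model $\dcl_{\mathcal{L}}(A)$ of $T$ and that $\iota$ extends uniquely to it --- for this last point one uses quantifier elimination for $T$ together with the van den Dries--Lewenberg fact that the residue map of a $T$-convex valued field commutes with $\emptyset$-definable continuous functions, so that the value and residue, hence the $\mathcal{O}_1$-status, of any element of $\dcl_{\mathcal{L}}(A)$ is pinned down by the data already present in $A$. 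Since a model of $T$ is an elementary $\mathcal{L}$-substructure of every model of $T$ containing it, we may then assume $A\models T$, and by a Zorn/saturation argument it is enough to show: for each $b\in\mathbb{K}_1\setminus A$ there is $b'\in\mathbb{K}_2$ making $b\mapsto b'$ an $\mathcal{L}\cup\{\mathcal{O}\}$-isomorphism $A\langle b\rangle\to A\langle b'\rangle$ over $A$.

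The construction of $b'$ splits into the valuation-theoretic cases familiar from Ax--Kochen--Ershov arguments, now carried out in the $T$-convex setting. If $\{v(b-a):a\in A\}$ has a maximum then, after translating $b$ by an element of $A$ and rescaling, either $v(b)\notin v(A^\times)$, and one takes $b'$ with $v(b')$ realizing the cut of $v(b)$ in the (divisible ordered abelian) value group of $A$; or $b$ is a unit of $\mathcal{O}_1$ whose residue lies outside the residue field of $A$ --- which, that residue field being a model of $T$ and hence definably closed, makes the residue ``transcendental'' over it --- and one takes a unit $b'$ of $\mathcal{O}_2$ whose residue realizes the corresponding residual cut, using saturation together with the van den Dries--Lewenberg theorem that the residue field of a $T$-convex valued field is a model of $T$. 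The remaining, immediate case is that $\{v(b-a):a\in A\}$ has no maximum, so $b$ is a pseudolimit of a pseudo-Cauchy family from $A$ with no pseudolimit in $A$; here one takes $b'\in\mathbb{K}_2$ realizing the same quantifier-free $\mathcal{L}\cup\{\mathcal{O}\}$-type over $A$ (supplied by saturation) and argues that this determines the $\mathcal{L}\cup\{\mathcal{O}\}$-isomorphism type of $A\langle b\rangle$. In each case the resulting map respects $\mathcal{L}$-structure by quantifier elimination for $T$ (over a model of $T$, a $1$-type is just a cut) and respects $\mathcal{O}$ because the data matched determines the valuation on $A\langle b\rangle$. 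I expect the immediate case to be the main obstacle: making it work --- like the reduction step above --- genuinely uses the o-minimality of $T$, not merely that it is a model-complete theory of fields, via the structure of cuts over models of o-minimal theories and van den Dries and Lewenberg's analysis of immediate extensions of $T$-convex valued fields, which is where the real content sits.
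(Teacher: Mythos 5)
The paper does not prove this statement at all: it is quoted as a black-box result of van den Dries and Lewenberg (from \emph{$T$-convexity and tame extensions}), so there is no in-paper argument to compare against. Your reconstruction is a faithful outline of the original proof: universal axiomatizability from the universal axiomatization of $T$ (available here because the language has a symbol for every definable unary function) plus convexity and closure of $\mathcal{O}$ under continuous $\emptyset$-definable functions; and quantifier elimination by the embedding test, with the reduction to a substructure that is a model of $T$ via the lemma that the standard part map commutes with continuous definable functions, followed by the Ax--Kochen--Ershov-style trichotomy (value-transcendental, residue-transcendental, immediate). Two caveats on the sketch itself. First, in the reduction step you also need that the value group of a model of $T$ sitting inside $(\mathbb{K}_1,\mathcal{O}_1)$ embeds compatibly into that of $(\mathbb{K}_2,\mathcal{O}_2)$; this is routine but is part of what ``extends uniquely to $\mathrm{dcl}_{\mathcal{L}}(A)$'' must deliver. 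Second, and more seriously, in the immediate case the phrase ``take $b'$ realizing the same quantifier-free $\mathcal{L}\cup\{\mathcal{O}\}$-type over $A$'' is close to assuming what a QE proof must establish: one has to show that this type is finitely satisfiable in the saturated model (which uses that the pseudo-Cauchy family has a pseudolimit there) and that matching it on the single element $b$ forces an isomorphism on all of $A\langle b\rangle$, which is exactly van den Dries and Lewenberg's analysis of immediate extensions of $T$-convex valued fields. You correctly flag this as where the content lies, but as written it is a pointer to the literature rather than an argument.
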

The following corollary is proven in \cite{vddii}.
The proof uses the Marker-Steinhorn theorem.
\begin{cor}
Every $(\mathbb{K},\mathcal{O})$-definable subset of $R^n$ is $\cR$-definable.
\end{cor}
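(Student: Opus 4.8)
The plan is to reduce the statement, via quantifier elimination for $(\mathbb{K},\mathcal{O})$, to two kinds of defining conditions and then to dispatch both using definability of types over tame substructures. So let $D\subseteq R^n$ be $(\mathbb{K},\mathcal{O})$-definable, say $D=\{\bar a\in R^n:(\mathbb{K},\mathcal{O})\models\phi(\bar a,\bar b)\}$ for some parameter tuple $\bar b\in K^m$. By Theorem~\ref{thm:tconbassss} I may take $\phi$ to be quantifier-free in the language $L\cup\{\mathcal{O}\}$, where $L$ denotes the language of $\cR$; such a $\phi(\bar x,\bar b)$ is then a boolean combination of $L$-atomic formulas $\theta(\bar x,\bar b)$ and of formulas $\mathcal{O}(t(\bar x,\bar b))$ with $t$ an $L$-term. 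Since the $\cR$-definable subsets of $R^n$ form a boolean algebra, it suffices to treat each such literal separately and show that the set of $\bar a\in R^n$ at which it holds is $\cR$-definable.

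The key input is the Marker--Steinhorn theorem: since $\mathbb{K}$ is a tame extension of $\cR$, the type $p:=\typ^{\mathbb{K}}(\bar b/R)$ is definable over $\cR$. Thus for every $L$-formula $\psi(\bar y;\bar z)$ there is an $\cR$-formula $d_\psi(\bar z)$ (with parameters from $R$) such that $\mathbb{K}\models\psi(\bar b;\bar c)$ if and only if $\cR\models d_\psi(\bar c)$, for all $\bar c$ in $R$ of the appropriate length.

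With this in hand the two cases are quick. For an $L$-atomic literal $\theta(\bar x,\bar b)$, applying the previous paragraph to $\psi(\bar y;\bar x):=\theta(\bar x,\bar y)$ gives $\{\bar a\in R^n:\mathbb{K}\models\theta(\bar a,\bar b)\}=\{\bar a\in R^n:\cR\models d_\psi(\bar a)\}$, which is $\cR$-definable. For a literal $\mathcal{O}(t(\bar x,\bar b))$, I would use that for $\bar a\in R^n$ the element $t(\bar a,\bar b)$ lies in $\mathcal{O}$, the convex hull of $R$ in $K$, exactly when there is $c\in R^{>}$ with $-c\leqslant t(\bar a,\bar b)\leqslant c$. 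Applying the previous paragraph to $\chi(\bar y;\bar x,z):=(-z\leqslant t(\bar x,\bar y)\leqslant z)$ produces an $\cR$-formula $d_\chi(\bar x,z)$ with $\mathbb{K}\models\chi(\bar b;\bar a,c)\iff\cR\models d_\chi(\bar a,c)$ for all $\bar a\in R^n$ and $c\in R$, whence
$$\{\bar a\in R^n:t(\bar a,\bar b)\in\mathcal{O}\}=\{\bar a\in R^n:\cR\models\exists z\,(z>0\wedge d_\chi(\bar a,z))\},$$
and this is $\cR$-definable because $\cR$-definable sets are closed under projection. Forming the corresponding boolean combination exhibits $D$ as $\cR$-definable.

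I expect no serious obstacle here; the $L$-atomic case is immediate from elementarity together with definability of types. The one point deserving a little care is the $\mathcal{O}$-literal: one must observe that membership in the convex hull $\mathcal{O}$ is an existential condition whose quantifier ranges over $R$, so that the Marker--Steinhorn translation survives once that quantifier is pulled to the outside on the $\cR$-side. Equivalently, one can phrase this through the standard part map, using that $t(\bar a,\bar b)\in\mathcal{O}$ if and only if $\std\,\lvert t(\bar a,\bar b)\rvert$ is finite.
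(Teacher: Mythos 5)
Your argument is correct and is essentially the proof the paper is pointing to: the paper itself defers to the cited reference rather than giving a proof, but it flags that the argument uses Marker--Steinhorn, and your reduction via the quantifier elimination of Theorem~\ref{thm:tconbassss} followed by definability of $\typ(\bar b/R)$ applied separately to the $L$-literals and the $\mathcal{O}$-literals is exactly that argument. The one point of substance --- rewriting $t(\bar a,\bar b)\in\mathcal{O}$ as an existential condition with the bound ranging over $R$, so that the Marker--Steinhorn translation applies and the quantifier can be re-imposed on the $\cR$-side --- is handled correctly.
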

In model-theoretic terminology, $\cR$ is \textbf{stably embedded} in $(\mathbb{K},\mathcal{O})$.
In particular, if $A \subseteq \mathcal{O}^n$ is $(\mathbb{K},\mathcal{O})$-definable then $\std(A) \subseteq R^n$ is $\cR$-definable.

\subsection{Uniform Limits}
Following \cite{vdDLimit} we apply stable embeddedness to show definibility of pointwise and uniform limits of definable families of functions.
We use this to construct the definable completion of a definable metric space in Section~\ref{section:completion}.

\begin{lem}\label{lem:familyofuniform}
Let $A \subseteq R^k$ be definable and let $\mathcal{F}=\{f_x : x\in R^l\}$ be a definable family of functions $A\rightarrow R$.
There is a definable family $\mathcal{G}$ of functions $A \rightarrow R$ such that a function $g: A \rightarrow R$ is an element of $\mathcal{G}$ if and only if there is a path $\gamma$ in $R^l$ such that $g$ is the pointwise limit of $f_{ \gamma(t) }$ as $t \rightarrow 0^+$.

If $\cR$ expands the real field then $\cG$ is the set of pointwise limits of sequences of elements of $\mathcal{F}$.
\end{lem}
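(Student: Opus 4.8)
The plan is to pass to the prime tame extension generated by a positive infinitesimal, realize the pointwise limits there as standard parts, and push the resulting description back down using stable embeddedness. Write $f\colon R^l\times A\to R$ for the definable function with $f(x,a)=f_x(a)$, let $f^*$ be its interpretation in an extension, and abbreviate $f^*_b(a)=f^*(b,a)$. Fix a positive $\zeta$ infinitesimal over $R$, let $\mathbb K=\cR(\zeta)$ be the prime model over $\zeta$ --- a tame extension of $\cR$, as recalled above --- and let $\mathcal O$ be the convex hull of $R$ in $K$, with maximal ideal $\mathfrak m$ and standard part $\std$. First I would establish: for a definable $g\colon A\to R$, there is a path $\gamma$ in $R^l$ with $f_{\gamma(t)}\to g$ pointwise as $t\to 0^+$ if and only if there is $b\in K^l$ with $f^*_b(a)\in\mathcal O$ and $\std f^*_b(a)=g(a)$ for all $a\in A$. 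For ``$\Rightarrow$'' take $b=\gamma^*(\zeta)$: for each $a\in A$ the function $t\mapsto f_{\gamma(t)}(a)$ is definable, $f^*_b(a)$ is its value at $\zeta$, so $\std f^*_b(a)=\lim_{t\to0^+}f_{\gamma(t)}(a)=g(a)\in R$. For ``$\Leftarrow$'' write $b=h^*(\zeta)$ with $h\colon R^>\to R^l$ definable (using definable Skolem functions) and put $\gamma=h$; then $\lim_{t\to0^+}f_{h(t)}(a)=\std f^*_b(a)=g(a)$ for each $a$.

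Next set $B=\{b\in K^l : f^*_b(a)\in\mathcal O \text{ for all } a\in A\}$, which is $(\mathbb K,\mathcal O)$-definable, and for $b\in B$ let $g_b=\std\circ f^*_b|_A\colon A\to R$; by the first step $\mathcal G=\{g_b:b\in B\}$. For $b\in B$ the graph $\gr(g_b)$ equals $\std(\gr(f^*_b|_A))$, the standard part of a $(\mathbb K,\mathcal O)$-definable subset of $\mathcal O^{k+1}$, hence is $\cR$-definable by the stable-embeddedness results recalled above. To get a single definable family I would note that $T=\{(b,a,r)\in K^l\times R^k\times R: b\in B,\ a\in A,\ f^*_b(a)-r\in\mathfrak m\}$ is $(\mathbb K,\mathcal O)$-definable with $T_b=\gr(g_b)$ for $b\in B$, and invoke stable embeddedness in its uniform form (via elimination of imaginaries for $\cR$, as in \cite{vdDLimit}) to obtain an $\cR$-definable family $\{D_c:c\in C\}$, $C\subseteq R^m$, among whose members every $\gr(g_b)$ occurs. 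Then $C'=\{c\in C: D_c=\gr(g_b)\text{ for some }b\in B\}$ is a $(\mathbb K,\mathcal O)$-definable, hence $\cR$-definable, subset of $R^m$, and $\mathcal G$ is the $\cR$-definable family $\{D_c:c\in C'\}$, with the stated characterization.

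For the last assertion, assume $\cR$ expands the real field. If $g=\lim_{t\to0^+}f_{\gamma(t)}$ pointwise then $g=\lim_n f_{\gamma(1/n)}$ pointwise, so $g$ is a pointwise limit of a sequence from $\mathcal F$. Conversely, suppose $g=\lim_n f_{x_n}$ pointwise. For every finite $F\subseteq A$ and $\epsilon\in R^>$ some $f_{x_n}$ is within $\epsilon$ of $g$ on $F$, so the partial type $q_g(y)=\{y\in R^l\}\cup\{|f_y(a)-g(a)|<\epsilon: a\in A,\ \epsilon\in R^>\}$ is finitely satisfiable in $\cR$, hence realized by some $b$ in an $|R|^+$-saturated elementary extension; since $\cR$ expands the real field that extension is tame, so $\std f^*_b(a)=g(a)$ for all $a\in A$, and by the Marker--Steinhorn theorem $\typ(b/R)$ is definable over $R$. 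As in \cite{vdDLimit} this yields a realization of $q_g$ of the form $h^*(\zeta)$ with $h\colon R^>\to R^l$ definable, and then the first step gives $g\in\mathcal G$.

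The only non-routine point is the uniformity step --- passing from ``each $\gr(g_b)$ is $\cR$-definable'' to a single $\cR$-definable family --- which relies on the uniform form of the stable embeddedness of $\cR$ in the tame pair, i.e.\ on elimination of imaginaries together with definability of types, exactly the mechanism exploited in \cite{vdDLimit}. A secondary subtlety in the real-field addendum is that the sequence $(x_n)$ need not be definable, so the path cannot be built from it directly; the compactness argument is what circumvents this.
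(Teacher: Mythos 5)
Your proposal is correct and follows essentially the same route as the paper: pass to a tame extension, describe $\mathcal G$ as the family of standard parts $\std f^*_b$ and use stable embeddedness of $\cR$ in the tame pair (plus completeness of the theory of tame pairs) to get $\cR$-definability and independence of the choice of extension, then specialize to the prime model $\cR(\zeta)$ to recover limits along definable paths. Your only real divergence is handling the real-field converse by realizing the finitely satisfiable partial type $q_g$ in a saturated (hence tame) extension rather than in an explicit ultrapower of $\cR$; this is an equivalent device, though the final transfer of the witness down to $\cR(\zeta)$ is more naturally justified by the van den Dries--Lewenberg completeness theorem than by Marker--Steinhorn.
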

This is essentially proven by van den Dries in \cite{vdDLimit}, however, he only worked over the reals.
We only sketch the proof.
\begin{proof}[Sketch]
Let $\cR^*=(R^*,+,\times,\leqslant,\ldots)$ be a tame elementary extension of $\cR$. 
We consider the tame pair $(\cR^*, R)$.
Stable embeddedness implies that there is an $\cR$-definable family $\cG=\{g_x:x\in B\}$ of functions $A\rightarrow R$ such that a function $g: A \rightarrow R$ is an element of $\cG$ if and only if for some $\alpha \in (R^*)^l$ we have:
$$g(a) = \std f^*_{\alpha}(a) \quad \text{for all } a \in A. $$
The completeness of the theory of tame pairs implies that $\cG$ does not depend on the choice of $\cR^*$.
Suppose $\cR^*=\cR(\zeta)$ is the prime model over a positive infinitesimal element $\zeta$ of an elementary extension of $\cR$.
Then the elements of $\cG$ are functions of the form $\std f^*_{\gamma^*(\zeta)}: A \rightarrow R$ for $\cR$-definable paths $\gamma$ in $R^l$.
For any path $\gamma$ in $R^l$:
$$\std f^*_{\gamma^*(\zeta)}(a) = \lim_{t\rightarrow 0^+} f_{\gamma(t)}(a) \quad \text{for all } a \in A. $$
Suppose that $\cR$ expands the real field.
Then every element of $\cG$ is a pointwise limit of a sequence of the form 
$$\{ f_{\gamma(\frac{1}{i})} \}_{ i \in \mathbb{N}} \quad \text{for a path } \gamma \text{ in } \bR^l.$$
We show that every pointwise limit of a sequence of elements of $\mathcal{F}$ is an element of $\cG$.
Let $\mathcal{U}$ be a nonprincipal ultrafilter on $\mathbb{N}$ and let $\cR^\mathcal{U} = (\bR^\mathcal{U},+, \times, \ldots)$ be the ultrapower of $\cR$ with respect to $\mathcal{U}$.
Let $\{ \alpha(i)\}_{ i \in \mathbb{N} }$ be a sequence of elements of $\bR^l$ such that $\{ f_{\alpha(i)} \}_{ i \in \mathbb{N} }$ pointwise converges as $i \rightarrow \infty$.
We let $\alpha$ be the element of $(\bR^\mathcal{U})^l$ corresponding to the sequence $\{ \alpha_i \}_{ i \in \mathbb{N} }$.
Then
$$ \std f^*_{\alpha}(a) = \lim_{ i \rightarrow \infty } f_{\alpha(i)}(a) \quad \text{for all } a \in A.$$
\end{proof}

In Proposition~\ref{prp:defcomplenion} we use the following corollary to construct the definable completion of a definable metric space.

\begin{cor}\label{cor:defuniformlimits}
There is a definable family $\cG'$ of functions $A\rightarrow R$ such that a function $g$ is an element of $\cG'$ if and only if it is a uniform limit of a family $\{f_{\gamma(t)}\}_{ t \in R^>}$ as $t\rightarrow 0^+$ for some definable $\gamma:R^{>}\rightarrow R^l$.
If $\cR$ expands the ordered field of reals then $\cG'$ is the set of uniform limits of sequences of elements of $\mathcal{F}$.
\end{cor}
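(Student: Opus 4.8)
The plan is to carve $\cG'$ out of the family $\cG$ furnished by Lemma~\ref{lem:familyofuniform}. Write $\cG = \{g_x : x \in B\}$, so that $\cG$ consists exactly of the pointwise limits $\lim_{t\to 0^+} f_{\gamma(t)}$ over definable paths $\gamma$ in $R^l$. The first observation is that a uniform limit of a definable family $\{f_{\gamma(t)}\}_{t \in R^{>}}$ is in particular its pointwise limit, so every function which is a uniform limit of such a family already appears as some $g_x$ with $x \in B$. Hence it is enough to isolate, by a first-order condition on $x$, those indices for which $g_x$ is a uniform limit; this is what makes $\cG'$ definable, and is the step that genuinely relies on Lemma~\ref{lem:familyofuniform}.

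Next I would introduce the definable function measuring sup-distance from $g_x$ to the family $\mathcal{F}$: for $x \in B$ put
$$\rho(x) := \inf_{y \in R^l}\;\sup_{a \in A} |f_y(a) - g_x(a)| \in R^{\geqslant} \cup \{\infty\},$$
noting that the inner supremum is a definable function of $(x,y)$ (with value $\infty$ allowed) and the infimum a definable function of $x$. I would then set $B' := \{x \in B : \rho(x) = 0\}$ and $\cG' := \{g_x : x \in B'\}$, a definable family of functions $A \to R$, and claim this is the required family. The equivalence to be checked is that $g_x$ is a uniform limit of some definable family $\{f_{\gamma(t)}\}$ if and only if $\rho(x) = 0$. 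The forward direction is immediate, since if $f_{\gamma(t)} \to g_x$ uniformly then $\sup_{a\in A}|f_{\gamma(t)}(a) - g_x(a)| \to 0$. For the converse, from $\rho(x) = 0$ the definable set $\{(t,y) \in R^{>}\times R^l : \sup_{a\in A}|f_y(a) - g_x(a)| < t\}$ has nonempty fibre over every $t$, so definable choice yields a definable $\gamma : R^{>} \to R^l$ with $\sup_{a\in A}|f_{\gamma(t)}(a)-g_x(a)| < t$ for all $t$, and then $f_{\gamma(t)} \to g_x$ uniformly as $t \to 0^+$. Combining with the first observation gives the characterization of $\cG'$.

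Finally, for the case where $\cR$ expands the real field I would argue that $\cG'$ coincides with the set of uniform limits of sequences from $\mathcal{F}$. If $g = g_x \in \cG'$, the path $\gamma$ produced above restricts to a sequence $\{f_{\gamma(1/i)}\}_{i\in\N}$ converging uniformly to $g$. Conversely, a sequence $\{f_{\alpha(i)}\}_{i\in\N}$ converging uniformly to $g$ converges pointwise to $g$, so by the final clause of Lemma~\ref{lem:familyofuniform} we have $g = g_x$ for some $x \in B$; and $\sup_{a\in A}|f_{\alpha(i)}(a)-g(a)| \to 0$ forces $\rho(x) = 0$, hence $x \in B'$ and $g \in \cG'$.

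I do not expect a serious obstacle here: the real content is that Lemma~\ref{lem:familyofuniform} provides an ambient definable family containing every pointwise --- and therefore every uniform --- limit, after which the only genuine manoeuvre is the use of definable choice to replace ``approximable in the sup-distance by members of $\mathcal{F}$'' with ``uniform limit along a single definable path''. The one point to keep an eye on is that the inner supremum defining $\rho$ must be allowed to take the value $\infty$, which is harmless since such indices automatically have $\rho(x) \neq 0$.
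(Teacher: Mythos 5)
Your proposal is correct and follows essentially the same route as the paper: the paper also defines $\cG'$ as the set of $g \in \cG$ such that for every $\epsilon \in R^{>}$ there is an $x \in R^l$ with $\|g - f_x\|_\infty \leqslant \epsilon$ (your condition $\rho(x)=0$ is just a restatement of this), and leaves the verification as "easy to check." Your write-up simply fills in those omitted details — the observation that uniform limits are pointwise limits and hence lie in $\cG$, the definable-choice construction of the witnessing path, and the sequence argument over the reals — all of which are sound.
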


\begin{proof}[Sketch]
We let $\cG'$ be the set of $g\in \cG$ such that for every $\epsilon\in\R^{>}$ there is an $x\in R^l$ such that 
$\|g-f_x \|_{\infty} \leqslant \epsilon$.
It is easy to check that this works.
\end{proof}

\section{Examples: Metrics Associated To Definable Families}\label{section:families}

In this section we describe definable metric spaces associated to definable families of functions and sets.

\subsection{Hausdorff Metrics}
Let $\mathcal{A} = \{ A_x  : x \in R^l \}$ be a definable family of subsets of $R^k$ such that $d_{H}(A_x, A_y) < \infty$ for all $x,y \in R^l$.
We put a definable pseudometric $d$ on $R^l$ by declaring
$$ d(x,y) = d_{H}(A_x, A_y)\quad\text{for all }x,y \in R^l.$$

 \subsection{Metrics Associated To Definable Families of Functions}

Let $\mathcal{F} = \{ f_x : x \in R^l \}$ be a definable family of functions $R^k \rightarrow R$.
If each element of $\mathcal{F}$ is bounded then we put a uniform pseudometric $d_\infty$ on $R^l$ by declaring:
$$ d_\infty(x,y) = \| f_x - f_y\|_\infty = \sup \{ | f_x(a) - f_y(a) | : a \in R^k \}\quad\text{for all }x,y\in R^l.$$
If every element of $\mathcal{F}$ is $C^r$ with bounded $r$th derivative then
$$ d(x,y) = \max_{ 0 \leqslant i \leqslant r } \{ \| f^{(i)}_x - f^{(i)}_y \|_{\infty} \} $$
is a definable pseudometric on $R^l$.
Thomas considered definable metric spaces of this form in \cite{Thomas}.

\subsection{Examples constructed using the CLR-volume Theorem}

Let $\mu_m$ be the $m$-dimensional Lebesgue measure on $\bR^k$.
Comte, Lion and Rolin proved the following proposition in \cite{CLR}.

\begin{prp}
Let $\{ A_x : x \in \bR^l\}$ be an $\ran$-definable family of subsets of $\bR^k$.
Then $\mu_m(A_x)$ is an $\ranex$-definable function $\bR^l \rightarrow \bR\cup\{\infty\}$.
\end{prp}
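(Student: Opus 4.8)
I would prove the proposition by reducing it to a statement about parametrized integration of $\ran$-definable functions. First I would apply $C^1$ cell decomposition in $\ran$ to the set $\{(x,y) : y \in A_x\} \subseteq \bR^l \times \bR^k$, obtaining a single $\ran$-definable partition into $C^1$ cells whose fibers over $x$ give a decomposition $A_x = \bigsqcup_j C_x^{(j)}$ with the number and combinatorial type of cells independent of $x$. Only cells of dimension exactly $m$ contribute to $\mu_m$; cells of dimension $<m$ are $\mu_m$-null, and if some cell has dimension $>m$ then $\mu_m(A_x)=\infty$ on the $\ran$-definable set of such $x$. After a fixed permutation of coordinates (one choice per cell), each $m$-dimensional cell is the graph of an $\ran$-definable $C^1$ map $\phi_x^{(j)}$ over an open cell $U_x^{(j)} \subseteq \bR^m$, and the area formula gives
$$ \mu_m(C_x^{(j)}) = \int_{U_x^{(j)}} \sqrt{\det\big(I + D\phi_x^{(j)}(y)^{T} D\phi_x^{(j)}(y)\big)}\, dy, $$
whose integrand is $\ran$-definable in $(x,y)$. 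Hence it suffices to show: for $\ran$-definable $f : \bR^l \times \bR^m \to \bR^{\geqslant}\cup\{\infty\}$, the function $x \mapsto \int_{\bR^m} f(x,y)\, dy \in \bR\cup\{\infty\}$ is $\ranex$-definable.

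The second step is this parametrized integration claim, proved by induction on $m$ via Fubini, $\int_{\bR^m} f(x,y)\,dy = \int_{\bR^{m-1}}\big(\int_{\bR} f(x,y',y_m)\, dy_m\big)dy'$, so that one only needs the case $m=1$ together with closure of the output class under one further integration. For $m=1$ I would invoke the preparation theorem for subanalytic ($\ran$-definable) functions of Lion--Rolin: after a further cell decomposition in the last variable, on each cell $f(x,y)$ has the prepared form $|y - \theta(x)|^{q}\, U(x,y)$ (or a monomial in $y$) with $q \in \Q$ and $U$ an $\ran$-definable unit comparable to a constant. Integrating such a piece over an interval with $\ran$-definable endpoints yields an explicit antiderivative: a power function of the endpoints when $q \neq -1$ (still $\ran$-definable after substitution), and a logarithm of a positive $\ran$-definable function when $q = -1$. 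Thus each integral lies in the ring of \emph{constructible functions}, the $\bR$-algebra generated by $\ran$-definable functions and logarithms of positive $\ran$-definable functions; the Lion--Rolin theorem asserts precisely that this ring is stable under parametrized integration, which closes the induction, and since $\log$ is $\ranex$-definable every constructible function is $\ranex$-definable.

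Finally I would treat the value $\infty$ and convergence near cell boundaries: the integrals above are improper at the endpoints of the cells, and the control on the exponents $q$ and on the units furnished by preparation determines exactly when $\int f(x,\cdot)$ diverges (some $q \leqslant -1$ with nonvanishing leading coefficient, or an unbounded domain with insufficient decay); this set of $x$ is $\ran$-definable since all the data — exponents, coefficients, cell boundaries — are $\ran$-definable, and one sets the function to $\infty$ there.

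The main obstacle is the parametrized integration step: the preparation theorem for subanalytic functions and the closure of the class of constructible (log-analytic) functions under integration — the substantive Lion--Rolin inputs. The reductions by cell decomposition, the area formula, and Fubini are routine once those are available. A secondary point requiring care is the uniformity of the cell decomposition and of the preparation data in the parameter $x$, i.e.\ ensuring that the number of cells and the shape of each prepared piece do not depend on $x$.
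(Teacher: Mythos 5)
The paper does not prove this proposition at all: it is imported verbatim from Comte--Lion--Rolin, so there is no internal argument to compare yours against. Your sketch is, in substance, a faithful outline of the strategy of that cited work and of Lion--Rolin's integration theorem: uniform $C^1$ cell decomposition of $\{(x,y):y\in A_x\}$, discarding fibers of dimension $\neq m$, the area formula to reduce $\mu_m$ of a graph cell to a parametrized integral of an $\ran$-definable integrand, Fubini induction, and the preparation theorem to show that one-variable integration lands in (and preserves) the ring of constructible functions, hence $\ranex$-definable functions. The one caveat worth making explicit is that the two ingredients you flag as ``the substantive Lion--Rolin inputs'' --- the preparation theorem and the stability of constructible functions under parametrized integration --- are essentially the entire content of the result; as written your argument is a correct reduction to those black boxes rather than an independent proof, which is appropriate here since the paper itself treats the proposition as a citation. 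The uniformity-in-$x$ issue you raise at the end is handled automatically by performing a single cell decomposition (and a single preparation) of the total space over $\bR^l\times\bR^k$ rather than fiber by fiber, so it is not a genuine gap.
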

This implies that if $f: \bR^l \times \bR^k \rightarrow \bR$ is $\ran$-definable then
$$ \int_{ \bR^k } f(a, x) \; dx $$
is an $\ranex$-definable, $\bR\cup\{\infty\}$-valued function of $a \in \bR^l$.
Let $\mathcal{F} = \{ f_x  : x \in \bR^l \}$ be an $\ran$-definable family of functions $\bR^k \rightarrow \bR$.
If every element of $\mathcal{F}$ is $L^p$ integrable then
$$ d_p(x,y) = \left[ \int_{ \bR^k } | f_x(a) - f_y(a) |^p \; d\mu_k \right]^{ \frac{1}{p} } $$ 
is an $\ranex$-definable pseudometric on $\bR^l$.
If $\{ A_x : x \in \bR^l \}$ is an $\ran$-definable family of bounded $m$-dimensional subsets of $\bR^k$ then 
$$ d_\mu(x,y) = \mu_m [ A_x \Delta A_y ] $$
is an $\ranex$-definable pseudometric on $\bR^l$.

\section{Definable Metrics with Noneuclidean Topology}\label{section:noeu}
\textit{In this section $\mathcal R$ is an expansion of the real field.}
We give examples of definable metric spaces which are not homeomorphic to any definable set equipped with its euclidean topology.
The trivial example is any infinite definable set $X$ equipped with the discrete metric $d_{disc}$ given by declaring $d_{disc}(x,x') = 1$ whenever $x \neq x'$.
We now give an example of a definable metric space which is not locally contractible.
Let $d$ be the metric on $[0,1]$ given by $d(x,y)=0$ when $x=y$ and $d(x,y)=\max\{x,y\}$ otherwise.
It is clear that $d$ is symmetric and reflective.
The ultrametric triangle inequality holds as:
$$ \max\{ x,y \} \leqslant \max\{ \max\{ x,z\}, \max\{y,z\} \} = \max\{ d(x,z), d(y,z) \} \quad \text{for all } x,y,z \in X. $$ 
Every point in $[0,1]$ is isolated except for $0$, which is in the closure of $(0,1)$.
This metric space is not locally contractible at $0$.

\subsection{Metric Spaces Associated to Definable Simplicial Complexes}\label{subsection:graph}
We assume that the reader is familiar with simplicial complexes.
Our source is the classic \cite{Spanier}.
An \textbf{abstract simplicial complex} $\mathcal{V} = (V, \mathcal{C})$ is a set $V$ of vertices together with a collection $\mathcal{C}$ of finite subsets of $V$ such that:
\begin{enumerate}
\item Every singleton subset of $V$ is an element of $\mathcal{C}$,
\item Every subset of an element of $\mathcal{C}$ is an element of $\mathcal{C}$.
\end{enumerate}
We say that $\mathcal{V}$ is \textbf{finite-dimensional} if there is an $N \in \mathbb{N}$ such that every element of $\mathcal{C}$ has cardinality at most $N$.
We only consider finite-dimensional simplicial complices.
We say that $\mathcal{V}$ is \textbf{locally finite} if every element of $\mathcal{C}$ intersects only finitely many elements of $\mathcal{C}$.
A \textbf{definable abstract simplicial complex} is a definable set $V$ together with a definable family $\mathcal{C} = \{ C_x : x \in \bR^l \}$ of finite subsets of $V$ such that $(V,\mathcal{C})$ is an abstract simplicial complex.
We associate a topological space $|\mathcal{V}|$ to an abstract simplicial complex $\mathcal{V}$ called the \textbf{geometric realization} of $\mathcal{V}$.
If $\mathcal{V}$ is locally finite then the construction we give is equivalent to any other construction of the geometric realization that the reader may have seen.
We let $|\mathcal{V}|$ be the set of functions $\alpha:V\rightarrow [0,1]$ such that:
$$\{v\in V:\alpha(v)\neq 0 \}\in\mathcal{C} \quad \text{and} \quad \sum_{v\in V} \alpha(v)=1. $$
We put a metric $d_\mathcal{V}$ on $|\mathcal{V}|$ by declaring:
$$d_\mathcal{V}(\alpha,\beta)= \sqrt{\sum_{v\in V}|\alpha(v)-\beta(v)|^2}.  $$
The following fact is clear:
\begin{fact}
If $\mathcal{V}$ is a definable abstract simplicial complex then $(|\mathcal{V}|,d_\mathcal{V})$ is a definable metric space.
\end{fact}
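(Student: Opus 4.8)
The plan is to realize $|\mathcal{V}|$ as a definable subset of some $R^n$, after which checking that $d_\mathcal{V}$ is a definable metric is routine. The key point is finite-dimensionality: fix $N \in \N$ with $|C_x| \leqslant N$ for all $x$. Then every $\alpha \in |\mathcal{V}|$ has support $\supp(\alpha) = \{ v \in V : \alpha(v) \neq 0 \} \in \mathcal{C}$ of cardinality between $1$ and $N$, the lower bound because $\sum_{v \in V} \alpha(v) = 1$. Fixing an embedding $V \subseteq R^m$ we obtain a definable linear order on $V$, namely the lexicographic order inherited from $R^m$. I would encode each $\alpha$ by the tuple $(v_1, \dots, v_j, t_1, \dots, t_j)$ where $\supp(\alpha) = \{ v_1 < \dots < v_j \}$, listed in increasing order, and $t_i = \alpha(v_i)$.

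First I would observe that, for each fixed $j$, the set of such tuples --- those with $v_1 < \dots < v_j$ in $V$, $\{v_1, \dots, v_j\} \in \mathcal{C}$, each $t_i > 0$, and $\sum_i t_i = 1$ --- is a definable subset of $R^{(m+1)j}$, and that $\alpha \mapsto (v_1, \dots, v_j, t_1, \dots, t_j)$ is a bijection from $|\mathcal{V}|$ onto the union of these sets over $j \in \{1, \dots, N\}$. Forming the disjoint union over $j$ and using elimination of imaginaries for $\cR$ --- or, concretely, padding each tuple to fixed length with zero-weight slots --- realizes $|\mathcal{V}|$ as a single definable set $Z \subseteq R^n$. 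Transporting $d_\mathcal{V}$ along this bijection: given encodings of $\alpha$ and $\beta$, the combined support $S := \supp(\alpha) \cup \supp(\beta)$ has at most $2N$ elements, and
$$ d_\mathcal{V}(\alpha,\beta)^2 = \sum_{v \in S} |\alpha(v) - \beta(v)|^2, $$
where $\alpha(v) = t_i$ if $v = v_i$ and $\alpha(v) = 0$ otherwise (similarly for $\beta$); this is a definable function of the two encoding tuples, obtained by a definable case distinction of bounded size over which of the listed vertices of $\alpha$ and $\beta$ coincide.

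Finally I would verify the metric axioms, which is immediate because $d_\mathcal{V}$ is literally a euclidean distance: for any $\alpha, \beta, \gamma \in |\mathcal{V}|$ the union of their supports has at most $3N$ elements, and on $R^S$ with its euclidean metric symmetry, the triangle inequality, and the implication $d_\mathcal{V}(\alpha,\beta) = 0 \Rightarrow \alpha = \beta$ all hold. Hence $(|\mathcal{V}|, d_\mathcal{V})$ satisfies conditions (1)--(3) in the definition of a definable metric space. The only genuinely delicate point is the first step --- passing from the description of $|\mathcal{V}|$ as a space of functions $V \to [0,1]$ to an honest definable subset of Cartesian space while handling the variable, unordered support --- and finite-dimensionality together with the definable order on $V$ and elimination of imaginaries is exactly what makes this work; everything afterwards is bookkeeping.
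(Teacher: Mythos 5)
Your proposal is correct; the paper states this Fact without proof (declaring it ``clear''), and your argument supplies exactly the routine verification intended: finite-dimensionality plus a definable linear order on $V\subseteq R^m$ lets you encode each $\alpha\in|\mathcal{V}|$ by its ordered support and weight vector, realizing $|\mathcal{V}|$ as a definable set on which $d_\mathcal{V}$ becomes a definable (finite, euclidean) expression. No gaps; this is the same (and essentially the only) approach.
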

This construction produces interesting definable metric spaces.

\subsection{Cayley Graphs}
Let $G$ be a finitely generated group.
We say that $G$ is \textbf{definably representable} if there is a free action of $G$ by definable functions on a definable set $A$.
Any finitely generated subgroup of $Gl_n(\bR)$ is definably representable.
Suppose that $G$ is a finitely generated group with a fixed symmetric set of generators $S = \{g_1, \ldots, g_n\}$.
Let $G \curvearrowright A$ be a free action of $G$ on a definable set $A$.
We define the \textbf{Cayley graph} of this action.
This depends on the choice of $S$.
We consider the elements of $S$ as functions $A\rightarrow A$.
The Cayley graph of $G \curvearrowright A$ is the graph $\mathcal{G}$ with vertex set $A$ and edge set
$$E=\{(a,b)\in A:(\exists 1 \leqslant  i \leqslant n) g_i(a)=b\}.$$
Then $\mathcal{G}$ is a definable graph, and the geometric realization of $\mathcal{G}$ is a definable metric space.
As a graph, $\mathcal{G}$ is the disjoint union of continuum many copies of the usual Cayley graph of $G$.
In this manner we construct definable metric spaces with unexpected properties.

\begin{example}[Sketch]
We give an example of two homeomorphic definable metric spaces $(X,d),(X',d')$ such that if $\tau: (X,d) \rightarrow (X',d')$ is a homeomorphism then $\tau$ induces a non Lebesgue measurable map.
We let:
$$ (X,d) = (\mathbb{R},e)\times (\mathbb{R},d_{\di}). $$
Let $\lambda \in (0,1)$ be an irrational number.
Let $\theta: [0,1] \rightarrow [0,1]$ be given by $\theta(t) = t + \lambda$ when $t + \lambda \in [0,1]$ and $g(t) = t + \lambda - 1$ otherwise.
Consider the action of $(\mathbb{Z}, +)$ on $[0,1]$ generated by $\theta$.
Let $\mathcal{G}$ be the Cayley graph of this action and let $(X',d')$ be the geometric realization of $\mathcal{G}$.
As a topological space, $(X',d')$ is the disjoint union of continuum many copies of $(\bR,e)$, so $(X',d')$ is homeomorphic to $(X,d)$.
We put an equivalence relation $\sim$ on $X'$ by declaring $x \sim y$ when $x$ and $y$ lie in the same connected component of $(X',d')$.
Recall that $[0,1]$ is the set of vertices of $\mathcal{G}$.
Two elements of $[0,1]$ are $\sim$-equivalent if and only if they lie in the same orbit of the action of $(\mathbb{Z},+)$.
The restriction of $\sim$ is essentially the Vitali equivalence relation.
Suppose that $h: (X',d') \rightarrow (X,d)$ is a homeomorphism.
Two elements $x,y \in X$ are in the same connected component of $(X,d)$ if and only if $h(x)$ and $h(y)$ lie in the same connected component of $(X,d)$.
Let $\tau: [0,1] \rightarrow \bR$ be the restriction of $\pi \circ h$ to $[0,1]$.
If $x,y \in [0,1]$ then $x \sim y$ holds if and only if $\tau(x)=\tau(y)$.
It is well-known that such a map $\tau: [0,1] \rightarrow \bR$ cannot be Lebesgue measurable.
\end{example}

\begin{example}
A definable metric space $(Y,d)$ is \textbf{definably connected} if $\emptyset$ and $Y$ are the only definable clopen subsets of $Y$.
Let $(X',d')$ be the definable metric space constructed in the previous example.
We show that $(X',d')$ is definably connected.
This gives an example of a definably connected metric space which is not connected.
Suppose that $A_1,A_2 \subseteq X'$ are disjoint nonempty clopen definable subsets of $X$.
Fixing $p_1 \in A_1, p_2 \in A_2$ we let $B_1,B_2 \subseteq [0,1]$ be the set of vertices which lie in the connected component of $p_1,p_2$, respectively.
As $B_1$ and $B_2$ are orbits of the action of $(\mathbb{Z}, +)$ on $[0,1]$, both $B_1$ and $B_2$ are dense in $([0,1],e)$.
As $B_1$ and $B_2$ are definable this implies $B_1 \cap B_2 \neq \emptyset$, contradiction.
\end{example}
In the next example we use a simple topological fact.
We leave the proof to the reader.
\begin{fact}\label{fact:graph}
Let $\mathcal{G}_1, \mathcal{G}_2$ be graphs where every vertex has degree at least $3$.
If the geometric realizations $|\mathcal{G}_1|$ and $|\mathcal{G}_2|$ are homeomorphic then $\mathcal{G}_1$ and $\mathcal{G}_2$ are isomorphic as graphs.
\end{fact}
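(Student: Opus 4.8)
The plan is to recover each $\mathcal{G}_i$ from the topological space $|\mathcal{G}_i|$ in a manner that is manifestly invariant under homeomorphism. The crucial point is local: for $x \in |\mathcal{G}|$ consider, for small $\epsilon > 0$, the number $b(x,\epsilon)$ of connected components of $B_{d_{\mathcal{G}}}(x,\epsilon) \setminus \{x\}$. Because distinct $1$-simplices of $\mathcal{G}$ meet only in a common vertex, a sufficiently small metric ball about a vertex $v$ is a union of half-open segments, one for each edge incident to $v$, glued at $v$; hence $b(v,\epsilon) = \deg(v)$ for all small $\epsilon$. A sufficiently small ball about a point in the relative interior of an edge is an open arc, so there $b(x,\epsilon) = 2$ for all small $\epsilon$. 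Thus $b(x) := \lim_{\epsilon \to 0^+} b(x,\epsilon)$ is well defined, and since every vertex has degree at least $3$, the vertex set is exactly
$$ V = \{\, x \in |\mathcal{G}| : b(x) \neq 2 \,\}, $$
which depends only on the topology of $|\mathcal{G}|$; moreover $b$ records the degree at each vertex.

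First I would check the two local claims above carefully: that small metric balls form a neighborhood basis with the stated component counts, and that these counts are eventually constant in $\epsilon$. Granting this, if $h : |\mathcal{G}_1| \to |\mathcal{G}_2|$ is a homeomorphism then $h$ carries components of punctured neighborhoods to components of punctured neighborhoods, so $b(h(x)) = b(x)$ for all $x$; in particular $h$ restricts to a bijection $V_1 \to V_2$ with $\deg_{\mathcal{G}_2}(h(v)) = \deg_{\mathcal{G}_1}(v)$ for every vertex $v$.

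Next, since $h(V_1) = V_2$, the homeomorphism $h$ maps the open set $|\mathcal{G}_1| \setminus V_1$ onto $|\mathcal{G}_2| \setminus V_2$ and hence induces a bijection between their sets of connected components. Each such component is the relative interior of a (closed) edge, so this is a bijection $E_1 \to E_2$. Finally, a homeomorphism preserves frontiers: if $C$ is the component corresponding to an edge $\sigma$, then $\partial(h(C)) = h(\partial(C))$, so the two vertices incident to $\sigma$ are sent to the two vertices incident to the edge $h(C)$. Therefore the pair of bijections $V_1 \to V_2$ and $E_1 \to E_2$ is incidence preserving, i.e.\ $\mathcal{G}_1 \cong \mathcal{G}_2$ as graphs.

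The step I expect to be the only real work — and it is bookkeeping rather than conceptual — is establishing the local model near a vertex of the metric realization $(|\mathcal{G}|, d_{\mathcal{G}})$: namely that a small punctured ball about $v$ has exactly $\deg(v)$ components, neither more nor fewer (the lower bound uses that the realizations of distinct edges are disjoint away from the common vertex, the upper bound that each edge contributes a single half-open segment), and that this number stabilizes as the radius shrinks. Once the local picture is pinned down, the rest is a short point-set argument, and it is for this reason that the hypothesis ``every vertex has degree at least $3$'' is exactly what is needed: a vertex of degree $2$ would be topologically indistinguishable from an interior edge point.
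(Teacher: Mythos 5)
Your proof is correct; the paper itself offers no argument here (it explicitly leaves the fact to the reader), and what you give is the standard one. The only point worth tightening is that $b(x)$ must be packaged as a genuinely topological invariant rather than a metric one --- e.g.\ as the stabilized (inverse-limit) number of connected components of $U \setminus \{x\}$ over the neighborhood filter of $x$ --- since a homeomorphism need not carry metric balls to metric balls; your observation that small balls form a neighborhood basis and that the count stabilizes is exactly what makes this work. Note also that since edges of a simplicial complex are $2$-element subsets of the vertex set, there are no loops or multiple edges, so the frontier of each component of $|\mathcal{G}| \setminus V$ really is a $2$-point set and the vertex bijection together with incidence preservation already determines the graph isomorphism.
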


\begin{example}[Sketch]\label{example:a}
We show that the Trivialization Theorem does not hold for definable families of metric spaces.
We construct a definable family of metric spaces which contains infinitely many elements up to homeomorphism.
Let $S = \{ x \in \bR^2 : \| x \| = 1 \}$, let $U(1)$ be the group of rotations of $S$ and let $\tau$ be the reflection across the $x$-axis.
If $\sigma\in U(1)$ then $\{\sigma,\tau\}$ generate a definable action of a dihedral group $D_\sigma$ on $S$.
The group $D_\sigma$ is finite if and only if $\sigma$ is a rational rotation.
Note that the Cayley graph of this action is $3$-regular.
Given $\sigma \in U(1)$ we let $X_\sigma$ be the geometric realization of the Cayley graph of this action.
Then $\{ X_\sigma : \sigma \in U(1) \}$ is a semialgebraic family of metric spaces.
Every connected component of $X_\sigma$ is homeomorphic to the geometric realization of the Cayley graph of $D_\sigma$.
Suppose that $\sigma,\eta \in U(1)$ and that $h: X_{\sigma}\rightarrow X_{\eta}$ is a homeomorphism.
Then $h$ maps connected components to connected components and thus induces a homeomorpism between the geometric realizations of the Cayley graphs of $D_\sigma$ and $D_\eta$.
Fact~\ref{fact:graph} implies that the Calyey graphs of $D_\sigma$ and $D_\eta$ are isomorphic as graphs.
Thus, if $\sigma$ and $\eta$ are rational rotations such that $|D_\sigma| \neq |D_\eta|$ then $X_\sigma$ is not homeomorphic to $X_\eta$.
So the family $\{ X_\sigma : \sigma \in U(1) \}$ contains infinitely many pairwise non-homeomorphic elements.
\end{example}

\section{Definable Completeness And Compactness}\label{section:completeness}
In this section we construct the definable completion of a definable metric space.
We show that definable completeness is equivalent to completeness over expansions of the real field.
Over expansions of the real field the definable completion agrees with the usual completion.
The notion of definable completeness was first studied in the context of definable metrics on definable families of functions by Thomas~\cite{Thomas}.
We also introduce a notion of definable compactness and show that over an expansion of the real field, definable compactness is equivalent to compactness.
\textit{Throughout this chapter $(X,d)$ and $(X',d')$ are definable metric spaces.}
A path $\gamma$ in $X$ is \textbf{Cauchy} if for every $\epsilon>0$ there is a $\delta>0$ such that if $0<t,t'<\delta$
then $d(\gamma(t),\gamma(t'))<\epsilon$.
An application of the triangle inequality proves that converging paths are Cauchy.
A path $\gamma$ in $X$ is said to be bounded if the image of $R^>$ under $\gamma$ is a bounded subset of $(X,d)$.
If the metric is not clear from context we will say that $\gamma$ $d$-converges.
A definable metric space is \textbf{definably complete} if every Cauchy path in it converges.

A definable metric space is \textbf{definably compact} if every path in it converges.
A definable metric space is \textbf{definably proper} if every bounded path in it converges.
A definably proper metric space is definably compact if and only if it is bounded.
A closed subset of a definably proper definable metric space endowed with the induced metric is definably proper and a definably proper space is definably complete.

\subsection{Basic Facts}
\begin{lem}\label{lem:continuity}
Let $h:(X,d)\rightarrow (X',d')$ be definable.
The following are equivalent:
\begin{enumerate}
\item $h$ is continuous. 
\item If $\gamma$ is a path in $(X,d)$ which converges to $x\in X$ then $h\circ \gamma$ converges to $h(x)$.
\end{enumerate}
\end{lem}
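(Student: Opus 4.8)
The plan is to prove $(1) \Rightarrow (2)$ by directly unwinding the definitions and to prove $(2) \Rightarrow (1)$ contrapositively, using definable choice to manufacture a convergent path witnessing any failure of continuity.

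First I would handle $(1) \Rightarrow (2)$. Suppose $h$ is continuous and $\gamma$ is a path in $(X,d)$ converging to $x \in X$. Fix $\epsilon > 0$. Continuity of $h$ at $x$ yields $\delta > 0$ such that $d(x,y) < \delta$ implies $d'(h(x),h(y)) < \epsilon$ for all $y \in X$. Since $\gamma$ converges to $x$ there is $\delta' > 0$ such that $d(\gamma(t),x) < \delta$ whenever $0 < t < \delta'$; for such $t$ we get $d'(h(\gamma(t)),h(x)) < \epsilon$. Hence $h \circ \gamma$ converges to $h(x)$, which is $(2)$.

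The content of the lemma is $(2) \Rightarrow (1)$, which I would prove by contraposition. Suppose $h$ is not continuous; then it fails to be continuous at some $x \in X$, so there is $\epsilon > 0$ such that for every $\delta > 0$ there is a $y \in X$ with $d(x,y) < \delta$ and $d'(h(x),h(y)) \geqslant \epsilon$. The set
$$ \{ (\delta, y) \in R^> \times X : d(x,y) < \delta \text{ and } d'(h(x),h(y)) \geqslant \epsilon \} $$
is definable and, by the previous sentence, its projection to $R^>$ is all of $R^>$. Applying definable choice I obtain a definable function $R^> \to X$, written $\delta \mapsto y_\delta$, with $d(x,y_\delta) < \delta$ and $d'(h(x),h(y_\delta)) \geqslant \epsilon$ for all $\delta \in R^>$. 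Setting $\gamma(t) = y_t$ gives a path in $X$. Since $d(\gamma(t),x) < t$ for all $t$, the path $\gamma$ converges to $x$ as $t \to 0^+$; but $d'(h(\gamma(t)),h(x)) \geqslant \epsilon$ for every $t$, so $h \circ \gamma$ does not converge to $h(x)$, contradicting $(2)$.

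There is no real obstacle here: the only ingredient beyond the definitions is the availability of definable choice (equivalently, definable Skolem functions) in an o-minimal expansion of a field, which is exactly what lets us collapse the quantifier alternation ``for every $\delta$ there is a witness $y$'' into a single definable path. One small point worth noting is that the case where $x$ is $d$-isolated causes no trouble in the contrapositive argument: if some $d$-ball about $x$ contains only $x$, then $h$ is trivially continuous at $x$, so when $h$ is discontinuous at $x$ the witnessing set genuinely projects onto all of $R^>$.
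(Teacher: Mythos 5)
Your proof is correct and follows essentially the same route as the paper: the forward direction is a direct unwinding of definitions, and the converse is obtained by using definable choice to turn a witness of discontinuity at a point $x$ into a definable path converging to $x$ whose image under $h$ stays bounded away from $h(x)$. The extra remark about $d$-isolated points is harmless but unnecessary, since the contrapositive hypothesis already guarantees witnesses at every scale.
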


\begin{proof}
It is quite obvious that $(i)$ implies $(ii)$.
We prove the other implication.
Suppose that $h$ is not continuous at $x\in X$ and let $h(x) = x'$.
There exists a $\delta>0$ such that for every $\epsilon>0$ there is a $y\in X$ satisfying
$d(x,y)<\epsilon$ and $d'(h(y),x')>\delta$.
Applying definable choice there is a path $\gamma:\gR\rightarrow (X,d)$ such that $d(\gamma(t),x)<t$ and
$d'(h(\gamma(t)),x')>\delta$ holds for all $t > 0$.
\end{proof}

\begin{lem}\label{lem:closureincomplete}
Let $A\subseteq X$ be definable.
\begin{enumerate}
\item If $(X,d)$ is definably complete then $(A,d)$ is definably complete if and only if $A$ is closed.
\item If $(X,d)$ is definably compact then $(A,d)$ is definably compact if and only if $A$ is closed.
\item If $(X,d)$ is definable compact then $(A,d)$ is locally definably compact if and only if $A$ is locally closed.
\end{enumerate}
\end{lem}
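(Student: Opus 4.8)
The plan is to reduce everything to the behaviour of converging and Cauchy paths, using the uniqueness of path limits noted in Section~2 together with repeated applications of definable choice. Parts (1) and (2) are proved by essentially the same argument, and the ``only if'' direction of (2) is in fact immediate from (1): a definably compact space is definably complete, since every path — in particular every Cauchy path — converges; so if $(A,d)$ is definably compact it is definably complete, hence closed by (1).

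For the ``if'' directions of (1) and (2): assume $A$ is closed in $(X,d)$ and let $\gamma$ be a Cauchy path (resp.\ an arbitrary path) in $(A,d)$. Then $\gamma$ is a Cauchy path (resp.\ a path) in $(X,d)$, so by definable completeness (resp.\ definable compactness) of $(X,d)$ it converges to some $x \in X$; since every $\gamma(t)$ lies in $A$ and $A$ is $d$-closed, the limit $x$ lies in $A$, so $\gamma$ converges in $(A,d)$. For the ``only if'' direction of (1): if $A$ is not closed, pick $x \in \cl(A) \setminus A$ and use definable choice to get a path $\gamma$ in $A$ with $d(\gamma(t), x) < t$ for all $t > 0$; this path converges to $x$ in $(X,d)$, hence is Cauchy, hence by definable completeness of $(A,d)$ converges to some $a \in A$; uniqueness of limits forces $a = x \notin A$, a contradiction.

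Part (3) is where the real work lies. I will use the characterization that $A$ is locally closed if and only if every $a \in A$ has an open neighborhood $U$ in $(X,d)$ with $A \cap U$ closed in $U$, equivalently that $A$ is open in $\cl(A)$. The first observation is that by part (2), since $(X,d)$ is definably compact, a definable $B \subseteq X$ is definably compact if and only if it is closed in $(X,d)$; in particular every closed ball $\{x \in X : d(a,x) \leqslant \epsilon\}$ is definably compact, and a closed subset of one of these is again definably compact. For the direction ``$A$ locally closed $\Rightarrow$ $(A,d)$ locally definably compact'': given $a \in A$, choose $U$ open with $A \cap U$ closed in $U$, then choose $\epsilon > 0$ with the closed ball $\bar B := \{x \in X : d(a,x) \leqslant \epsilon\}$ contained in $U$; a short computation shows $A \cap \bar B = (A \cap U) \cap \bar B$ is closed in $X$, hence definably compact, and it contains the $(A,d)$-neighborhood $\{y \in A : d(a,y) < \epsilon\}$ of $a$. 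For the converse: if $(A,d)$ is locally definably compact then, shrinking a given definably compact neighborhood to a closed ball inside it (a closed subset of a definably compact set is definably compact), each $a \in A$ has an $\epsilon > 0$ with $\{y \in A : d(a,y) \leqslant \epsilon\}$ definably compact, hence closed in $(X,d)$. I then claim the open ball $V := \{x \in X : d(a,x) < \epsilon/2\}$ witnesses $a \in \inter_{\cl(A)}(A)$: given $y \in V \cap \cl(A)$, use definable choice to get a path $\gamma$ in $A$ converging to $y$; for small $t$ we have $d(\gamma(t), a) < \epsilon$, so $\gamma(t) \in \{y \in A : d(a,y) \leqslant \epsilon\}$, and since this set is closed in $X$ and $\gamma(t) \to y$, we get $y \in A$. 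Hence $V \cap \cl(A) \subseteq A$, so $A$ is open in $\cl(A)$.

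The main obstacle is the topological bookkeeping in part (3): keeping straight which sets are closed in $X$, closed in $U$, closed in $\cl(A)$, or closed in $A$, and verifying that intersecting $A \cap U$ with a closed ball contained in $U$ genuinely produces a set closed in $X$. Everything else is a routine combination of definable choice, the triangle inequality, and uniqueness of limits; the only genuinely new input in (3) beyond (1)--(2) is the equivalence ``definably compact subset $\Leftrightarrow$ closed subset'' supplied by part (2).
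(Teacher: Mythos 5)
Your proof is correct and, for part (1), follows exactly the argument the paper gives: the ``if'' direction via closedness catching the limit of a Cauchy path, and the ``only if'' direction via a definable-choice path converging to a frontier point. The paper omits the proofs of (2) and (3) as ``very similar,'' and your arguments for those parts --- in particular reducing (3) to the equivalence ``definably compact $\Leftrightarrow$ closed'' supplied by (2), applied to definable closed balls --- correctly fill in the omitted details, so there is nothing to object to.
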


\begin{proof}
We only prove the first claim as the proofs of the other two claims are very similar and the idea is familiar from metric space topology.
Suppose that $(X,d)$ is definably complete and that $A$ is closed.
Every Cauchy path in $A$ has a limit in $X$, and as $A$ is closed the limit must be an element of $A$.
Conversely, suppose that $A$ is not closed.
Let $p$ be a frontier point of $A$.
Applying definable choice we let $\gamma$ be a path in $A$ such that $d(\gamma(t),p)<t$ holds for all $t > 0$.
Then $\gamma$ is Cauchy with limit $p\notin A$.
So $(A,d)$ is not definably complete.
\end{proof}

\begin{prp}\label{prp:compactmax}
Suppose that $(X,d)$ is definably compact, and let $f: (X,d) \rightarrow (R,e)$ be definable and continuous.
Then $f$ has a maximum and a minimum.
\end{prp}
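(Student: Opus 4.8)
The plan is to mimic the classical proof of the extreme value theorem in metric space topology, with definable compactness playing the role of sequential compactness and definable choice replacing the extraction of convergent subsequences. It suffices to produce a maximum, since the minimum of $f$ is the maximum of $-f$, which is again definable and continuous.

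First I would show that $f(X)$ is bounded above. Suppose not; then for every $t \in R^>$ there is an $x \in X$ with $f(x) > 1/t$, so by definable choice there is a path $\gamma$ in $X$ with $f(\gamma(t)) > 1/t$ for all $t \in R^>$. Since $(X,d)$ is definably compact, $\gamma$ converges to some $x \in X$, and then Lemma~\ref{lem:continuity} forces $f \circ \gamma$ to converge to $f(x)$ in $(R,e)$; but $f(\gamma(t)) > 1/t$ makes this impossible, a contradiction. Hence $f(X)$ is bounded above. By o-minimality $f(X) \subseteq R$ is a finite union of points and intervals, so it has a supremum $M \in R$.

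Next I would show that $M$ is attained. If not, then for every $t \in R^>$ there is an $x \in X$ with $M - t < f(x) \leqslant M$, and definable choice yields a path $\gamma$ in $X$ with $M - t < f(\gamma(t)) \leqslant M$ for all $t \in R^>$; in particular $f \circ \gamma$ converges to $M$ in $(R,e)$. Again $\gamma$ converges to some $x \in X$ by definable compactness, so $f \circ \gamma$ converges to $f(x)$ by Lemma~\ref{lem:continuity}, whence $f(x) = M$, contradicting our assumption. Therefore $f$ attains the value $M$, and $M$ is its maximum.

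I do not anticipate a serious obstacle. The only points requiring a little care are that paths are permitted to be discontinuous, so definable choice applies with no regularity constraint on $\gamma$, and that the "convergence" of $f \circ \gamma$ appearing in Lemma~\ref{lem:continuity} is convergence with respect to the euclidean metric $e$ on $R$ — which is exactly what the two path arguments above exploit.
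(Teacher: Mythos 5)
Your proposal is correct and follows essentially the same route as the paper: use definable choice to build a path along which $f$ approaches its supremum, apply definable compactness to make the path converge, and use the path characterization of continuity (Lemma~\ref{lem:continuity}) to see that the limit point attains the supremum, which also rules out an infinite supremum. The only cosmetic difference is that you split the unbounded and bounded cases into two separate path arguments, whereas the paper handles both with a single path; the substance is identical.
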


\begin{proof}
We prove that $f$ has a maximum.
Let $r=\sup\{(f(x):x\in X\}\in R_\infty$.
Let $\gamma:R^{>}\rightarrow X$ be a path in $X$ such that $0<r-(f\circ \gamma)(t)<r-t$ holds for all $t \in R^>$ if $r<\infty$ and $(f\circ \gamma)(t)>t$ holds for all $t \in R^>$ if $r=\infty$.
As $(X,d)$ is definably compact $\gamma$ must converge to some point $z$.
Continuity of $f$ implies $f(z)=r$.
This implies that $r<\infty$.
\end{proof}

\begin{lem}\label{lem:compactproduct}
A product of two definably compact metric spaces is definably compact.
\end{lem}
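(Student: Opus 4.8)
The plan is to reduce convergence of a path in the product to convergence of its two coordinate paths. Fix the product metric $d''$ on $X \times X'$, say
$d''\bigl((a,b),(a',b')\bigr) = \max\{d(a,a'),\, d'(b,b')\}$
(any of the usual choices — sup, sum, or euclidean combination — induces the product topology and the argument below is identical); note $d''$ is definable because $d$ and $d'$ are, so $(X\times X', d'')$ is indeed a definable metric space. Let $\gamma$ be an arbitrary path in $(X\times X', d'')$, i.e.\ a definable function $R^> \to X \times X'$, and write $\gamma(t) = (\gamma_1(t), \gamma_2(t))$. Since the coordinate projections are definable, $\gamma_1 = \pi_X \circ \gamma$ is a path in $(X,d)$ and $\gamma_2 = \pi_{X'}\circ\gamma$ is a path in $(X',d')$.

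Now invoke definable compactness of the factors: $\gamma_1$ converges to some $x \in X$ and $\gamma_2$ converges to some $x' \in X'$. Given $\epsilon > 0$, choose $\delta_1 > 0$ with $d(\gamma_1(t),x) < \epsilon$ for $0 < t < \delta_1$ and $\delta_2 > 0$ with $d'(\gamma_2(t),x') < \epsilon$ for $0 < t < \delta_2$; then for $0 < t < \min\{\delta_1,\delta_2\}$ we get $d''\bigl(\gamma(t),(x,x')\bigr) < \epsilon$. Hence $\gamma$ converges to $(x,x')$. As $\gamma$ was an arbitrary path, $(X\times X', d'')$ is definably compact, and an obvious induction extends this to any finite product.

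I do not expect a genuine obstacle here; the only point worth flagging is a conceptual one. In ordinary point-set topology the analogous assertion — that a product of two countably (or sequentially) compact spaces is again countably (sequentially) compact — can fail. What rescues the definable setting is precisely that a path is a definable map out of the \emph{fixed} linearly ordered domain $R^>$, so its coordinate projections are again paths on the very same domain; definable compactness is thus stable under finite products for the same formal reason that makes the coordinatewise convergence argument above go through without any choice of subpath.
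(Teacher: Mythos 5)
Your proof is correct and follows exactly the paper's argument: project the path to the two factors, apply definable compactness to each coordinate path, and observe that coordinatewise convergence gives convergence in the product metric. The paper's proof is just a terser version of the same reasoning (it does not even bother to spell out the $\epsilon$--$\delta$ step or the choice of product metric).
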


\begin{proof}
Suppose $(X,d)$ and $(X',d')$ are definably compact.
Let $\pi: X\times X' \rightarrow X$ and $\pi': X \times X' \rightarrow X'$ be the coordinate projections.
Let $\gamma$ be  a path in $X \times X'$.
Let $\xi = \pi \circ \gamma$ and $\xi' = \pi' \circ \gamma$.
By definable compactness, $\xi$ converges to some $x \in X$ and $\xi'$ converges to some $x' \in X$.
It follows immediately that $\gamma = (\xi,\xi')$ converges to $(x,x')$.
\end{proof}

\begin{lem}\label{lem:compactnodiscrete}
Suppose that $(X,d)$ is definably compact and infinite.
If $0 < t \ll 1$ then there are $x,y \in X$ such that $d(x,y) = t$.
\end{lem}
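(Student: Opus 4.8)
The plan is to analyze the definable set $I := \{\, d(x,y) : x,y \in X,\ x \neq y \,\} \subseteq R^{>}$ and to show that it contains an interval $(0,\delta)$. Since every $t$ in such an interval is positive, a witness $d(x,y) = t$ automatically has $x \neq y$, so this is exactly the assertion that $d(x,y) = t$ is solvable for all $0 < t \ll 1$. Note $I \neq \emptyset$ because $X$ is infinite. In parallel, since $X$ is infinite we have $\dim X \geqslant 1$, so $X$ admits a definable injective path $\gamma : R^{>} \rightarrow X$ (for instance the restriction of a definable injective map from an open interval onto a one-dimensional cell of $X$, precomposed with a definable injection of $R^{>}$ into that interval); this $\gamma$ is the object I will play against definable compactness.

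Suppose, toward a contradiction, that $(0,\delta) \not\subseteq I$ for every $\delta > 0$. By o-minimality $I$ is a finite union of points and open intervals. If $\inf I = 0$, then since the finitely many isolated points of $I$ are bounded away from $0$, one of the finitely many open intervals comprising $I$ must have infimum $0$ and hence contain some $(0,b)$ with $b > 0$, contrary to our assumption. Therefore $\delta' := \inf I > 0$, and consequently $d(x,y) \geqslant \delta'$ for all $x \neq y$ in $X$.

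It remains to contradict definable compactness. By definable compactness the path $\gamma$ converges to some $z \in X$, so there is $\delta'' > 0$ with $d(\gamma(t),z) < \delta'/2$ for all $0 < t < \delta''$. Choose any distinct $t_1, t_2 \in (0,\delta'')$. Injectivity of $\gamma$ gives $\gamma(t_1) \neq \gamma(t_2)$, so $d(\gamma(t_1),\gamma(t_2)) \geqslant \delta'$ by the previous paragraph, while the triangle inequality gives
$$ d(\gamma(t_1),\gamma(t_2)) \leqslant d(\gamma(t_1),z) + d(z,\gamma(t_2)) < \delta', $$
a contradiction. Hence $(0,\delta) \subseteq I$ for some $\delta > 0$, which is the claim.

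The one genuinely essential ingredient is the o-minimality step that upgrades ``$(0,\delta) \not\subseteq I$ for every $\delta$'' to the uniform separation $d(x,y) \geqslant \delta'$; everything else is a routine convergence-plus-triangle-inequality argument, the only minor technicality being the existence of an injective definable path, which is immediate from $\dim X \geqslant 1$. I do not anticipate any real obstacle here.
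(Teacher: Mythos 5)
Your proof is correct and follows essentially the same route as the paper: both arguments use o-minimality to show that if the conclusion fails then distinct points of $X$ are uniformly separated (so $(X,d)$ is discrete), and then derive a contradiction from the convergence of an injective definable path, which exists because $X$ is infinite. The only cosmetic difference is that you analyze the definable image set $I$ of attained distances explicitly, whereas the paper states the corresponding dichotomy for small $t$ directly.
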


\begin{proof}
One of the following holds:
\begin{enumerate}
\item If $0 < t \ll 1$ there there are $x,y \in X$ such that $d(x,y) = t$.
\item If $0 < t \ll 1$ then $d(x,y) \neq t$ for all $x,y \in X$.
\end{enumerate}
We assume that $(ii)$ holds and derive a contradiction.
Suppose $\epsilon > 0$ is such that if $0 < t < \epsilon$ then there do not exist $x,y \in X$ such that $d(x,y) = t$.
Then if $x,y \in X$ and $d(x,y) < \epsilon$ then $x = y$, so $(X,d)$ is discrete.
As $X$ is infinite there is an injective path $\gamma : R^> \rightarrow X$.
As $(X,d)$ is discrete, $\gamma$ does not have a limit as $t \rightarrow 0^+$, contradiction.
\end{proof}

\begin{prp}\label{prp:compactuniformcontinuity}
Let $f: (X,d) \rightarrow (X',d')$ be a definable continuous function.
If $(X,d)$ is definably compact then $f$ is uniformly continuous.
If $(X,d)$ is definably compact and $f$ is bijective then $f$ is a uniform equivalence and hence a homeomorphism.
\end{prp}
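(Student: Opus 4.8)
The plan is to write down explicit definable moduli of continuity and verify the two properties they must have: that they take values in $R^>$, and that they tend to $0$ as the argument tends to $0^+$. For the first assertion I would set
$$ g_2(t) = t + \sup\{\, d'(f(x),f(y)) : x,y\in X,\ d(x,y)\leqslant t \,\} $$
for $t\in R^>$. This is definable and manifestly positive, and by construction $d'(f(x),f(y))\leqslant g_2(d(x,y))$ for all $x,y$. To see it is finite-valued I would first note that $(X,d)$, being definably compact, is $d$-bounded: apply Proposition~\ref{prp:compactmax} to the continuous map $(x,y)\mapsto d(x,y)$ on the definably compact product $X\times X$ (Lemma~\ref{lem:compactproduct}). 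Next, $(f(X),d')$ is again definably compact, since any path in $f(X)$ lifts by definable choice to a path in $X$, which converges, so its image converges by Lemma~\ref{lem:continuity}; running the boundedness argument for $f(X)$ then bounds the supremum appearing in $g_2$.

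The next step is to show $g_2(t)\to 0$ as $t\to 0^+$. Since $g_2$ is definable and monotone its right limit at $0$ exists, so suppose toward a contradiction it equals $c>0$. Then for every $t>0$ there are $x,y$ with $d(x,y)\leqslant t$ and $d'(f(x),f(y))>c/2$, so by definable choice there are paths $\gamma_1,\gamma_2$ in $X$ with $d(\gamma_1(t),\gamma_2(t))\leqslant t$ and $d'(f(\gamma_1(t)),f(\gamma_2(t)))>c/2$ for all $t$. By definable compactness $\gamma_1$ converges to some $x_0$, and then $d(\gamma_2(t),x_0)\leqslant d(\gamma_2(t),\gamma_1(t))+d(\gamma_1(t),x_0)\leqslant t+d(\gamma_1(t),x_0)\to 0$, so $\gamma_2$ converges to $x_0$ as well. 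Lemma~\ref{lem:continuity} gives $f\circ\gamma_1\to f(x_0)$ and $f\circ\gamma_2\to f(x_0)$, whence $d'(f(\gamma_1(t)),f(\gamma_2(t)))\to 0$ by the triangle inequality, contradicting the lower bound $c/2$. Hence $f$ is uniformly continuous.

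For the bijective case, $f$ is surjective and $g_2$ already gives the upper bound, so I only need a definable $g_1:R^>\to R^>$ with $g_1(t)\to 0$ as $t\to 0^+$ and $g_1(d(x,y))\leqslant d'(f(x),f(y))$. I would take
$$ g_1(t) = \min\bigl(t,\ \inf\{\, d'(f(x),f(y)) : x,y\in X,\ d(x,y)\geqslant t \,\}\bigr), $$
with the convention $\inf\emptyset = +\infty$. Then $g_1$ is definable, $g_1(t)\leqslant t\to 0$, and the required inequality is immediate. The only point to check is $g_1(t)>0$: if the constraint set is empty then $g_1(t)=t>0$, and otherwise it is a closed, hence definably compact (Lemma~\ref{lem:closureincomplete}, Lemma~\ref{lem:compactproduct}), subset of $X\times X$ on which $(x,y)\mapsto d'(f(x),f(y))$ is continuous, so by Proposition~\ref{prp:compactmax} the infimum is attained at some $(x_0,y_0)$ with $d(x_0,y_0)\geqslant t>0$; injectivity of $f$ then gives $f(x_0)\neq f(y_0)$ and so $d'(f(x_0),f(y_0))>0$. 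This produces $g_1(d(x,y))\leqslant d'(f(x),f(y))\leqslant g_2(d(x,y))$ with both moduli tending to $0$, so $f$ is a uniform equivalence, and therefore, as recorded earlier, a homeomorphism.

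I expect the main obstacle to be precisely the well-definedness of these moduli as $R^>$-valued functions: the finiteness of $g_2$ genuinely rests on the definable compactness of the image $f(X)$ together with Proposition~\ref{prp:compactmax}, and the positivity of $g_1$ genuinely rests on the definable compactness of the set $\{d\geqslant t\}$ together with injectivity of $f$. The remaining non-formal step is extracting a common limit of $\gamma_1$ and $\gamma_2$ from the bound $d(\gamma_1(t),\gamma_2(t))\leqslant t$, which is where definable compactness of $X$ itself is used; everything else is bookkeeping with the triangle inequality and Lemma~\ref{lem:continuity}.
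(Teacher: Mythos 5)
Your proposal is correct and follows essentially the same route as the paper: a definable modulus of continuity defined as an extremum over a definably compact constraint set in $X^2$, made well-defined by Proposition~\ref{prp:compactmax}, with the limit at $0^+$ established by a definable-choice path argument, definable compactness of $X$, and Lemma~\ref{lem:continuity}, and with injectivity supplying positivity of the lower modulus. The only difference is bookkeeping: the paper extremizes over the level sets $\{(x,y): d(x,y)=t\}$, which forces a separate finite case and an appeal to Lemma~\ref{lem:compactnodiscrete} to ensure these are nonempty for small $t$, whereas your sublevel/superlevel sets together with the $t+(\cdot)$ and $\min(t,\cdot)$ corrections sidestep both issues.
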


\begin{proof}
Let $f:(X,d)\rightarrow (X',d')$ be a definable continuous map and suppose that $(X,d)$ is definably compact.
The proposition follows immediately in the case that $X$ is finite, so we suppose that $X$ is infinite.
For $t \in R^>$ let
$$ A_t = \{(x,y) \in X^2 : d(x,y) = t\}. $$
Applying Lemma~\ref{lem:compactnodiscrete} let $\epsilon > 0$ be such that if $t < \epsilon$ then $A_t \neq \emptyset$.
Each $A_t$ is closed, so it follows from the previous lemma that each $A_t$ is a definably compact subset of $X^2$. 
Define $g:(0,\epsilon)\rightarrow R^{\geqslant}$ by
$$g(t)=\max\{d'(f(x),f(y)): (x,y) \in A_t \}.$$
 Proposition~\ref{prp:compactmax} shows that $g$ is defined.
We have $d'(f(x),f(y)) \leqslant g(d(x,y))$ so to show that $f$ is uniformly continuous it suffices to show that $\lim_{t\rightarrow 0^+}g(t)=0$.
Suppose otherwise towards a contradiction.
There are paths $\gamma_1,\gamma_2:(0,\epsilon)\rightarrow X$ such that $d(\gamma_1(t),\gamma_2(t)) = t$ and $$d((f(\gamma_1(t)),f(\gamma_2(t)))\geqslant\delta \quad \text{ for some } \delta > 0.$$
Let $x_1,x_2$ be the limits of $\gamma_1$ and $\gamma_2$, respectively.
Then $d(x_1,x_2)=0$ so $x_1=x_2$, but $d(f(x_1),f(x_2))\geqslant\delta$, contradiction.

We now suppose in addition that $f$ is a bijection and show that $f$ is a uniform equivalence.
Let $h: (0,\epsilon) \rightarrow R$ be the definable function given by
$$ h(t) = \min\{d'(f(x),f(y)):(x,y) \in A_t\}. $$
As $A_t$ is definably compact $h(t)$ is defined.
As $f$ is injective for all $t > 0$ we have $h(t) > 0$.
As $h(t) \leqslant g(t)$, $\lim_{t\rightarrow 0^+}h(t) = 0$.
We have $h(d(x,y)) \leqslant d'(f(x),f(y))$, so $f$ is a uniform equivalence.
\end{proof}
The next lemma is used in a crucial way to prove Theorem~\ref{main}.
\begin{lem}\label{lem:coveringdefcompacts}
Let $(Y,d)$ be a definably compact metric space.
Let $X$ be a definably compact subset of euclidean space which admits a definable continuous surjection $(X,e) \rightarrow (Y,d)$.
Then there is a definable set $X'$ and a definable homeomorphism $(X',e) \rightarrow (Y,d)$.
\end{lem}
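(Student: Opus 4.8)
The plan is to reduce, via Proposition~\ref{prp:compactuniformcontinuity}, to the single task of producing a definable continuous injection $\iota\colon (Y,d)\to(R^N,e)$ for some $N$. Indeed, if such an $\iota$ exists then, viewed as a bijection onto its image, it is a definable continuous bijection from the definably compact space $(Y,d)$ to the definable metric space $(\iota(Y),e)$, hence a homeomorphism by Proposition~\ref{prp:compactuniformcontinuity}; one then takes $X'=\iota(Y)$ and $\iota^{-1}$ for the required homeomorphism. The hypotheses on $X$ and $f$ are what keep the induction below inside the stated setting: restricting $f$ over a $d$-closed $Y'\subseteq Y$ yields $f^{-1}(Y')$, which is euclidean-closed hence definably compact in $X$ by Lemma~\ref{lem:closureincomplete}, and still continuously surjects onto $(Y',d)$. (A short argument with definable choice and definable compactness of $X$ also shows $f$ is definably closed, hence a definable quotient map, so that a definable $h\colon Y\to R^m$ is $d$-continuous as soon as $h\circ f$ is euclidean-continuous; this widens the pool of usable functions should distance functions alone turn out to be insufficient.)

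To build $\iota$ I would use distance functions. For each $y\in Y$ the map $d(y,\cdot)\colon(Y,d)\to(R,e)$ is $1$-Lipschitz, hence continuous, so for any tuple $y_1,\dots,y_N\in Y$ the definable map $\iota=(d(y_1,\cdot),\dots,d(y_N,\cdot))$ is continuous, and it is injective precisely when the $y_i$ are chosen so that no pair of distinct points $z,z'\in Y$ satisfies $d(y_i,z)=d(y_i,z')$ for every $i$. Since distinct $z\neq z'$ are separated by $z$ itself ($d(z,z)=0\neq d(z,z')$), every pair is separated by \emph{some} point of $Y$; the real content is that \emph{finitely many} points separate \emph{all} pairs simultaneously.

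I would establish the existence of such a finite set of ``centres'' by induction on $\dim Y$, in a form uniform over definable families of definably compact metric spaces. The case $\dim Y=0$ is immediate: $Y$ is finite of size bounded across the family, and taking the $y_i$ to enumerate $Y$ works. For $\dim Y=n\geq 1$, pick $y_1\in Y$ and consider $g=d(y_1,\cdot)$ with level sets $Y_a=g^{-1}(a)$, each $d$-closed hence definably compact; a dimension count on $\gr(g)$ (which has dimension $n$) forces $\dim Y_a<n$ for all but finitely many $a$, and the inductive hypothesis applied uniformly in $a$ supplies finitely many centres separating points of each such $Y_a$. Assembling these with centres for the finitely many exceptional level sets and with $y_1$ itself gives the desired finite set, and Proposition~\ref{prp:compactuniformcontinuity} then finishes the proof as in the first paragraph. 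The step I expect to be the main obstacle is precisely the exceptional level sets of full dimension $n$ — ``distance spheres of full dimension'', which genuinely occur (for instance in metric cones) — where naive recursion on dimension stalls: one must either argue that $y_1$ can be chosen so that $g$ has no $n$-dimensional level set, or handle the conical behaviour by a secondary induction bounded by the complexity of the formula defining $d$.
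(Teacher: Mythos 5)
Your reduction to Proposition~\ref{prp:compactuniformcontinuity} is fine, but the heart of your argument --- that finitely many distance functions $d(y_1,\cdot),\dots,d(y_N,\cdot)$ separate the points of a definably compact $(Y,d)$ --- is exactly the part you do not prove, and neither of the two repairs you sketch for the full-dimensional level sets works. First, you cannot in general choose $y_1$ so that $d(y_1,\cdot)$ has no level set of full dimension: take $Y=[0,1]$ with the truncated metric $d(x,y)=\min\{|x-y|,c\}$ for a fixed $c\in(0,1)$. This is a definable metric inducing the euclidean topology, so $(Y,d)$ is definably compact, yet for \emph{every} centre $p$ the sphere of radius $c$ is $[0,1]\setminus(p-c,p+c)$, which is one-dimensional. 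Second, an ``induction on the complexity of the formula defining $d$'' is not an available induction principle here --- o-minimality gives you induction on dimension, not on syntax --- and you give no argument that the recursion into full-dimensional spheres (spheres inside spheres, with no drop in dimension) terminates. So the finite separating family, and hence the continuous injection $\iota$, is not established. Note also that your construction makes essentially no use of the hypothesis that $(Y,d)$ is a continuous image of a definably compact euclidean set; that hypothesis is precisely the leverage the lemma is designed to exploit, and dropping it leaves you trying to prove a statement whose difficulty is comparable to that of Theorem~\ref{main} itself.

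For contrast, the paper's proof takes the given surjection $h\colon (X,e)\to(Y,d)$, observes that its kernel $E=\{(x,y)\in X^2:h(x)=h(y)\}$ is a closed, hence definably compact, equivalence relation on $X$, and invokes the o-minimal definable quotient theorem (2.15 of \cite{vdD}) to realize $X/E$ as a definable set $X'$ carrying the quotient topology as its euclidean topology; the induced map $(X',e)\to(Y,d)$ is then a continuous bijection between definably compact spaces, and Proposition~\ref{prp:compactuniformcontinuity} --- the one step you and the paper share --- finishes the argument.
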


\begin{proof}
Let $h : (X,e) \rightarrow (Y,d)$ be a definable continuous surjection.
Let $E\subseteq X^2$ be the kernel of $h$, i.e., the equivalence relation 
$$ E= \{(x,y)\in X^2:h(x)=h(y)\}.$$
We endow the set-theoretic quotient $X/E$ with the quotient topology.
Continuity of $h$ implies that $E$ is a closed, and hence definably compact, subset of $X^2$.
We apply 2.15 of \cite{vdD} to obtain a definable set $X'$ and a definable continuous map
$p:X\rightarrow X'$ such that the kernel of $p$ is $E$ and the induced bijection $p_E: X/E\rightarrow (X',e)$ is a homeomorphism.
As $p$ is continuous, $(X',e)$ is definably compact.
Let $h_E: (X',e) \rightarrow (Y,d)$ be the map induced by $h$.
Then $h_E$ is a continuous bijection between definably compact metric spaces.
Proposition~\ref{prp:compactuniformcontinuity} implies that $h_E$ is a homeomorphism.
\end{proof}
We now prove two lemmas about extending definable functions:
\begin{lem}\label{lem:firstextend}
Suppose $(X',d')$ is definably complete.
Let $A \subseteq X$ be definable and dense and let $f: (A,d) \rightarrow (X',d')$ be a uniformly continuous definable function.
Then $f$ admits a unique extension to a uniformly continuous definable function $(X,d) \rightarrow (X',d')$.
\end{lem}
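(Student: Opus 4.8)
The plan is to mimic the classical proof that a uniformly continuous map into a complete space extends to the closure, taking care that all objects remain definable. First I would handle existence. Given $x \in X$, since $A$ is dense I apply definable choice to pick a path $\gamma$ in $A$ with $d(\gamma(t), x) < t$ for all $t > 0$; then $\gamma$ converges to $x$, hence is Cauchy. Uniform continuity of $f$ gives a modulus $g$ with $g(t) \to 0$ as $t \to 0^+$, and the estimate $d'(f(\gamma(t)), f(\gamma(t'))) \leqslant g(d(\gamma(t),\gamma(t')))$ shows that $f \circ \gamma$ is a Cauchy path in $X'$. Since $(X',d')$ is definably complete, $f \circ \gamma$ converges to some point, which I declare to be $\bar f(x)$. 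To make this a legitimate definition I would not literally use the chosen path but instead set $\bar f(x)$ to be the unique limit point; concretely, $\bar f(x)$ is the unique $x' \in X'$ such that for every $\epsilon > 0$ there is $\delta > 0$ with $d(a,x) < \delta \Rightarrow d'(f(a), x') < \epsilon$ for $a \in A$. This is a first-order definable condition in $x$ and $x'$, and the path argument above shows such an $x'$ exists; uniqueness is immediate since $X'$ is Hausdorff. On $A$ itself, continuity of $f$ forces $\bar f|_A = f$, so $\bar f$ extends $f$.

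Next I would check that $\bar f$ is uniformly continuous on $X$, with the same modulus $g$ (or a small modification of it). Given $x, y \in X$, pick paths $\gamma_x, \gamma_y$ in $A$ converging to $x$ and $y$ respectively; then $d(\gamma_x(t), \gamma_y(t)) \to d(x,y)$, so for $0 < t \ll 1$ we have $d(\gamma_x(t), \gamma_y(t)) \leqslant d(x,y) + t$, whence $d'(f(\gamma_x(t)), f(\gamma_y(t))) \leqslant g(d(x,y) + t)$. Letting $t \to 0^+$ and using that $f \circ \gamma_x \to \bar f(x)$, $f \circ \gamma_y \to \bar f(y)$, together with continuity of $d'$, gives $d'(\bar f(x), \bar f(y)) \leqslant \limsup_{t \to 0^+} g(d(x,y)+t)$. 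A mild monotonization of $g$ — replacing $g$ by $\tilde g(s) := \sup\{g(s') : 0 < s' \leqslant 2s\}$, which is definable, nondecreasing, and still tends to $0$ at $0^+$ — makes this bound exactly $\tilde g(d(x,y))$, establishing uniform continuity of $\bar f$ on all of $X$.

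Finally, uniqueness of the extension: if $\bar f_1, \bar f_2$ are two uniformly continuous (in fact merely continuous) definable extensions of $f$, then for any $x \in X$ take a path $\gamma$ in $A$ converging to $x$; by Lemma~\ref{lem:continuity}, $\bar f_i \circ \gamma$ converges to $\bar f_i(x)$, but $\bar f_i \circ \gamma = f \circ \gamma$ since $\gamma$ lands in $A$, so both limits coincide and $\bar f_1(x) = \bar f_2(x)$.

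The main obstacle is the definability of $\bar f$: one must resist defining $\bar f(x)$ via an arbitrarily chosen path (which would make $\bar f$ depend on the choice) and instead phrase the definition purely in terms of the $\epsilon$–$\delta$ condition relating $x$ and the candidate value $x'$, then invoke the Cauchy-path argument only to prove that this first-order-defined relation is total and single-valued. Everything else — the modulus estimates, the monotonization trick, uniqueness — is routine metric-space bookkeeping adapted to paths in place of sequences, using Lemma~\ref{lem:continuity} and definable choice throughout.
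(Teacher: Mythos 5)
Your proof is correct and follows essentially the same route as the paper: extend $f$ by taking limits along paths in $A$ converging to points of $X$, using the modulus of uniform continuity to see these image paths are Cauchy and then invoking definable completeness. The only difference is cosmetic: the paper secures definability by choosing the approximating paths uniformly via a single definable function $\psi : X \times R^{>} \rightarrow A$ with $d(\psi_x(t),x)<t$ (the limit being independent of the choice anyway), rather than via your $\epsilon$--$\delta$ characterization of the limit point; both devices work.
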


\begin{proof}
We construct an extension; it is clear from the construction that the extension is unique.
Let $g:R^> \rightarrow R^>$ be a definable continuous function such that $\lim_{t\rightarrow 0^+} g(t) = 0$ and $d'(f(x),f(y)) \leqslant g(d(x,y))$ whenever $x,y \in A$.
Applying definable choice let $\psi: X \times R^{>} \rightarrow A$ be a definable function such for each $(x,t) \in X \times R^{>}$ we have $d(\psi_x(t),x) < t$.
Fix $x \in X$ for the moment.
For any $t,t' \in R^>$ we have
$$ d'(f(\psi_x(t)),f(\psi_x(t'))) \leqslant g(d(\psi_x(t)),\psi_x(t'))).$$
As $\psi_x$ is Cauchy this inequality implies that $f \circ \psi_x$ is Cauchy.
Definable completeness of $(X',d')$ implies that $f \circ \psi_x$ converges.
Therefore, for any $x \in X$ we may let $\hat{f}(x)$ be the limit of $f \circ \psi_x$ in $(X',d')$.
A computation shows that $\hat{f}$ is uniformly continuous:
$$ d'(\hat{f}(x),\hat{f}(y)) = \lim_{t\rightarrow 0^+} d'(f(\psi_x(t)),f(\psi_y(t))) \leqslant \lim_{t\rightarrow 0^+} g(d(\psi_x(t),\psi_y(t))) = g(d(x,y)). $$
\end{proof}
The proof of Lemma~\ref{lem:firstextend} also shows the following:

\begin{lem}\label{lem:whatuneed}
Suppose $(X,d)$ and $(X',d')$ are definably complete.
Let $A \subseteq X$ be definable and dense.
Suppose that $f: A \rightarrow X'$ is a distance preserving function.
Then $f$ admits a unique extension to a distance preserving definable function $(X,d) \rightarrow (X',d')$.
\end{lem}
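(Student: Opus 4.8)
The plan is to reduce directly to Lemma~\ref{lem:firstextend}. Every distance preserving function is uniformly continuous: it satisfies $d'(f(x),f(y)) \leqslant g(d(x,y))$ with $g = \id$, and $g(t) \rightarrow 0$ as $t \rightarrow 0^+$. So Lemma~\ref{lem:firstextend} applies (only definable completeness of $(X',d')$ is used for this) and yields a unique uniformly continuous definable extension $\hat f : (X,d) \rightarrow (X',d')$. It then remains only to verify that this $\hat f$ is in fact distance preserving.

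For that I would reuse the computation at the end of the proof of Lemma~\ref{lem:firstextend}. Recall that there one fixes a definable choice function $\psi : X \times R^{>} \rightarrow A$ with $d(\psi_x(t),x) < t$ for all $t$, and defines $\hat f(x)$ as the limit of the Cauchy path $f \circ \psi_x$ in $(X',d')$. The triangle inequality gives $|d(\psi_x(t),\psi_y(t)) - d(x,y)| < 2t$, hence $d(\psi_x(t),\psi_y(t)) \rightarrow d(x,y)$ as $t \rightarrow 0^+$; since $f$ is distance preserving on $A$,
$$ d'(\hat f(x),\hat f(y)) = \lim_{t \rightarrow 0^+} d'(f(\psi_x(t)),f(\psi_y(t))) = \lim_{t \rightarrow 0^+} d(\psi_x(t),\psi_y(t)) = d(x,y), $$
where the first equality uses continuity of $d'$ along the converging paths $f \circ \psi_x$ and $f \circ \psi_y$. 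Thus $\hat f$ is distance preserving.

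For uniqueness: if $g_1,g_2 : (X,d) \rightarrow (X',d')$ are distance preserving --- in particular continuous --- extensions of $f$, then they agree on the dense set $A$. Given $x \in X$, definable choice produces a path $\gamma$ in $A$ with $d(\gamma(t),x) < t$, so $\gamma$ converges to $x$; by Lemma~\ref{lem:continuity} the path $f \circ \gamma = g_1 \circ \gamma$ converges to $g_1(x)$ and $f \circ \gamma = g_2 \circ \gamma$ converges to $g_2(x)$, and limits are unique, so $g_1(x) = g_2(x)$. I do not expect a genuine obstacle here: the one point meriting a word of care is the interchange of limit and metric in the display above, which is exactly the routine triangle-inequality estimate already implicit in the proof of Lemma~\ref{lem:firstextend}.
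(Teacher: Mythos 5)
Your proposal is correct and is essentially the paper's own argument: the paper simply remarks that the proof of Lemma~\ref{lem:firstextend} also establishes this lemma, and you have spelled out exactly why --- the final display of that proof, with $g=\id$ and $f$ distance preserving on $A$, yields equality rather than just an inequality. The uniqueness argument via convergence of paths in the dense set $A$ is likewise the intended one.
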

Immediately from this we have:

\begin{lem}\label{lem:whatuneed1}
Let $(X,d)$ and $(X',d')$ be definably complete metric spaces.
Let $A \subseteq X$ and $A' \subseteq X'$ be definable and dense.
A definable isometry $(A,d) \rightarrow (A',d')$ admits a unique extension to a definable isometry $(X,d) \rightarrow (X',d')$.
\end{lem}

\section{The Definable Completion}\label{section:completion}
We construct the definable completion of a definable metric space.
Our construction is an application of the following fact:

\begin{fact}\label{fact:uniformcompletefamily}
Let $A \subseteq R^k$ be a definable set.
Let $\mathcal{F} = \{ f_x : x \in R^l \}$ be a definable family of functions $A \rightarrow R$ and $\mathcal{G} = \{ g_x  : x\in B \}$ be the definable family of functions $A \rightarrow R$ given by Corollary~\ref{cor:defuniformlimits}.
Let $d_\infty$ be the pseudometric on $B$ given by $d_\infty(x,y) = \| f_x - f_y \|_\infty$.
Then the metric space associated to $(B, d_\infty)$ is definably complete.
If $\cR$ expands the ordered field of reals then the metric space associated to $(B, d_\infty)$ is complete.
\end{fact}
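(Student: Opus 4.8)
The plan is to establish definable completeness directly. I would take an arbitrary Cauchy path in $(B,d_\infty)$ — here $d_\infty(x,y)=\|g_x-g_y\|_\infty$ — produce a candidate limit function, check that the convergence to it is uniform, and then recognise that limit function as an element of $\mathcal{G}$; since $\mathcal{G}$ is parametrised by $B$, this exhibits a point of $B$ to which the path $d_\infty$-converges. Passing from the pseudometric $(B,d_\infty)$ to its associated metric space is harmless: a Cauchy path in the quotient lifts, by definable choice, to a Cauchy path in $B$, and a $d_\infty$-limit of the lift projects to a limit of the original path. So it suffices to show that every Cauchy path $\beta\colon R^{>}\to B$ converges in $(B,d_\infty)$.

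Fix such a $\beta$. For each $a\in A$ the function $t\mapsto g_{\beta(t)}(a)$ is definable on $R^{>}$, so by o-minimality it has a limit in $R\cup\{-\infty,\infty\}$ as $t\to 0^{+}$; the Cauchy condition, via $|g_{\beta(t)}(a)-g_{\beta(t')}(a)|\leqslant\|g_{\beta(t)}-g_{\beta(t')}\|_\infty$, keeps this function bounded near $0$, so the limit is an element of $R$, which I call $h(a)$. As the limit of a definable function of $(a,t)$ as $t\to 0^{+}$ is a definable function of $a$, the function $h\colon A\to R$ is definable. Next I would run the standard argument that a Cauchy family with a pointwise limit converges to it uniformly: given $\epsilon>0$, choose $\delta>0$ with $\|g_{\beta(t)}-g_{\beta(t')}\|_\infty<\epsilon$ for all $t,t'<\delta$; fixing $t<\delta$ and letting $t'\to 0^{+}$ in $|g_{\beta(t)}(a)-g_{\beta(t')}(a)|<\epsilon$ yields $|g_{\beta(t)}(a)-h(a)|\leqslant\epsilon$ for every $a$, hence $\|g_{\beta(t)}-h\|_\infty\leqslant\epsilon$. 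Thus $g_{\beta(t)}\to h$ uniformly as $t\to 0^{+}$.

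It remains to show $h\in\mathcal{G}$. Each $g_{\beta(t)}$ lies in $\mathcal{G}$, so by Corollary~\ref{cor:defuniformlimits} it is a uniform limit of a definable path in $\mathcal{F}$; in particular it is approximable in $\|\cdot\|_\infty$ by members of $\mathcal{F}$ to within any positive error, so the definable set $\{x\in R^{l}:\|g_{\beta(t)}-f_x\|_\infty\leqslant t\}$ is nonempty for every $t>0$. Definable choice then produces a definable $\gamma\colon R^{>}\to R^{l}$ with $\|g_{\beta(t)}-f_{\gamma(t)}\|_\infty\leqslant t$, whence
$$\|f_{\gamma(t)}-h\|_\infty\leqslant\|f_{\gamma(t)}-g_{\beta(t)}\|_\infty+\|g_{\beta(t)}-h\|_\infty\leqslant t+\|g_{\beta(t)}-h\|_\infty\longrightarrow 0$$
as $t\to 0^{+}$. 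So $h$ is the uniform limit of the definable path $f_{\gamma(t)}$ in $\mathcal{F}$, and Corollary~\ref{cor:defuniformlimits} gives $h\in\mathcal{G}$, say $h=g_{x_0}$ with $x_0\in B$. Then $d_\infty(\beta(t),x_0)=\|g_{\beta(t)}-h\|_\infty\to 0$, so $\beta$ converges to $x_0$, proving definable completeness. When $\cR$ expands the real field one repeats the argument verbatim with sequences in place of paths and the sequential form of $\mathcal{G}$ in Corollary~\ref{cor:defuniformlimits} (taking a Cauchy sequence $(x_i)$ and $\|g_{x_i}-f_{\gamma(i)}\|_\infty\leqslant 1/i$ in the diagonalisation), yielding completeness. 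I expect the only delicate point to be the construction of $h$ and its definability — resting on definable completeness of $R$ and on definability of limits of definable families — while the rest is the classical completeness-of-bounded-functions argument carried out definably, with definable choice playing the role of countable choice.
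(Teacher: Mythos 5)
Your proof is correct and follows essentially the same route as the paper: extract the uniform limit $g_0$ of $g_{\beta(t)}$, use definable choice to approximate each $g_{\beta(t)}$ by some $f_{\gamma(t)}$ within $t$, and conclude via the triangle inequality that $g_0$ is a uniform limit along a path in $\mathcal{F}$, hence lies in $\mathcal{G}$. You are in fact slightly more careful than the paper, which simply asserts the existence of the uniform limit, whereas you justify it via o-minimality (eventual monotonicity plus boundedness from the Cauchy condition) and the standard pointwise-to-uniform upgrade.
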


Recall that $\mathcal{G}$ consists of those functions $A \rightarrow R$ which are uniform limits of families of the form $\{ f_{\gamma(t)}\}_{ t \in R^> }$ as $t \rightarrow 0^+$ for paths $\gamma$ in $R^l$.

\begin{proof}
We show that the metric space associated to $(B, d_\infty)$ is definably complete.
Fix a Cauchy path $\gamma : R^> \rightarrow (B, d_\infty)$.
As $\gamma$ is Cauchy, $f_{ \gamma(t) }$ uniformly converges as $t \rightarrow 0^+$.
Let $g_0: A \rightarrow R$ be the uniform limit of $g_{\gamma(t)}$ as $t \rightarrow 0^+$.
Applying definable choice we let $\psi : B \times R^> \rightarrow R^l$ be a definable function such that:
$$ \| f_{\psi(x,t)} - g_x \|_\infty \leqslant t \quad \text{for all } x \in B, t \in R^>. $$
The triangle inequality implies:
$$ \| f_{ \psi( \gamma(t),t)} - g_0 \|_{\infty} \leqslant \| f_{ \psi( \gamma(t),t)} - g_{\gamma(t)} \|_{\infty} + \| g_{\gamma(t)} - g_0 \|_{\infty} \leqslant t + \| g_{\gamma(t)} - g_0 \|_{\infty}. $$
Thus $f_\psi(\gamma(t),t)$ uniformly converges to $g_0$ as $t \rightarrow 0^+$.
Thus $g_0$ is an element of $\mathcal{G}$.
If $\cR$ is an expansion of the ordered field of real numbers then a similar argument with sequences in place of paths shows that the metric space associated to $(B,d_\infty)$ is complete.
\end{proof}
For our definable metric space $(X,d)$ we construct a definably complete metric space $(\widetilde{X},\widetilde{d})$ and a distance preserving definable map $(X,d) \rightarrow (\widetilde{X},\widetilde{d})$ with dense image.
It follows from Lemma~\ref{lem:firstextend} that the definable completion is  left adjoint to the forgetful functor from the category of definably complete metric spaces and uniformly continuous definable maps to the category of all definable metric spaces and and uniformly continuous definable maps.
It follows from Lemma~\ref{lem:whatuneed1} that the definable completion is unique up to definable isometry.

\begin{prp}\label{prp:defcomplenion}
There is a definably complete metric space $(\widetilde{X},\widetilde{d})$ and a definable isometry $(X,d)\rightarrow(\widetilde{X},\widetilde{d})$ with dense image.
Any uniformly continuous definable map $(X,d)\rightarrow (X,d')$ admits a unique extension to a uniformly continuous definable map between the definable completions $(\widetilde{X},\widetilde{d})\rightarrow (\widetilde{X'},\widetilde{d'})$.
\end{prp}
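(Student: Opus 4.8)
The plan is to realize $(X,d)$ as a definable family of bounded functions and then appeal directly to Fact~\ref{fact:uniformcompletefamily}. The standard Kuratowski embedding almost does this: for each $x \in X$ we would like to associate the function $a \mapsto d(x,a)$ on $X$, since $\|d(x,\cdot) - d(y,\cdot)\|_\infty = d(x,y)$ by the triangle inequality, so $x \mapsto d(x,\cdot)$ is a definable isometry of $(X,d)$ into the space of real-valued functions on $X$ with the sup-metric. The obstacle is that these functions need not be bounded, so Fact~\ref{fact:uniformcompletefamily} does not apply verbatim. The fix is the usual one: fix a basepoint $x_0 \in X$ and replace $d(x,\cdot)$ by $f_x(a) := d(x,a) - d(x_0,a)$, which is bounded by $d(x,x_0)$ via the triangle inequality, and still satisfies $\|f_x - f_y\|_\infty = d(x,y)$. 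So first I would set $A := X$, let $\mathcal{F} := \{ f_x : x \in X\}$ with $f_x(a) = d(x,a) - d(x_0,a)$, and observe this is a definable family of bounded functions $X \to R$.

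Next I would invoke Corollary~\ref{cor:defuniformlimits} to obtain the definable family $\mathcal{G} = \{ g_x : x \in B\}$ of uniform limits of paths $\{f_{\gamma(t)}\}_{t \in R^>}$, together with the pseudometric $d_\infty(x,y) = \|g_x - g_y\|_\infty$ on $B$; passing to the quotient by the relation $g_x = g_y$ gives a genuine definable metric space, which I would call $(\widetilde X, \widetilde d)$. Fact~\ref{fact:uniformcompletefamily} says exactly that $(\widetilde X, \widetilde d)$ is definably complete (and complete when $\cR$ expands $\bR$). The map $x \mapsf_x$ composed with the quotient map is a definable isometry $(X,d) \to (\widetilde X, \widetilde d)$: it is distance preserving because $\|f_x - f_y\|_\infty = d(x,y)$, and in particular injective. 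To see the image is dense, note that every $g \in \mathcal{G}$ is by definition the uniform limit of $f_{\gamma(t)}$ for a path $\gamma$ in $X$, so the image of $\gamma$ under the embedding converges to the class of $g$ in $(\widetilde X, \widetilde d)$; hence the image is dense.

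For the second sentence of the proposition, suppose $h : (X,d) \to (X',d')$ is uniformly continuous and definable, and let $(\widetilde X, \widetilde d)$, $(\widetilde{X'}, \widetilde{d'})$ be the definable completions just constructed, with isometric embeddings $\iota : X \to \widetilde X$ and $\iota' : X' \to \widetilde{X'}$ having dense image. Then $\iota' \circ h : \iota(X) \to \widetilde{X'}$ is a uniformly continuous definable map from the dense definable subset $\iota(X) \subseteq \widetilde X$ into the definably complete space $(\widetilde{X'}, \widetilde{d'})$, so Lemma~\ref{lem:firstextend} produces a unique uniformly continuous definable extension $\widetilde h : (\widetilde X, \widetilde d) \to (\widetilde{X'}, \widetilde{d'})$, which is the desired map; uniqueness is part of Lemma~\ref{lem:firstextend}. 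I expect the only real subtlety to be the boundedness issue in the first paragraph — checking that the basepoint-shifted functions $f_x$ are bounded and that shifting does not disturb the sup-distance — after which everything else is a direct citation of Fact~\ref{fact:uniformcompletefamily}, Corollary~\ref{cor:defuniformlimits}, and Lemma~\ref{lem:firstextend}.
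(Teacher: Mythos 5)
Your proposal is correct and follows essentially the same route as the paper: the basepoint-shifted Kuratowski embedding $x \mapsto d(x,\cdot) - d(x_0,\cdot)$, followed by Corollary~\ref{cor:defuniformlimits} and Fact~\ref{fact:uniformcompletefamily} for definable completeness, with density coming from the definition of $\mathcal{G}$ as uniform limits along paths and the extension property from Lemma~\ref{lem:firstextend}. The only detail worth making explicit is that the equality $\|f_x - f_y\|_\infty = d(x,y)$ (not just $\leqslant$) is obtained by evaluating at the point $a = x$, as the paper does.
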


\begin{proof}
We construct the definable completion.
We use the Kuratowski Embedding.
Fix a basepoint $p\in X$.
For $x\in X$ let $d_x:X\rightarrow R$ be given by $d_x(y)=d(x,y)$.
For each $x \in X$ consider the definable function $\tau_x:X\rightarrow R$ where $\tau_x(y)=d_x(y)-d_p(y)$.
Each $\tau_x$ is bounded as we have
 $$|d_x(y)-d_p(y)|=|d(x,y)-d(p,y)|\leqslant d(x,p) \quad \text{for all } y \in X.$$
We show that $\|\tau_{x}-\tau_{y}\|_{\infty}=d(x,y)$ for all $x,y \in X$.
Fix $x,y \in X$.
As $$|\tau_x(z)-\tau_{y}(z)|=|d_x(z)-d_{y}(z)|\leqslant d(x,y) \quad \text{for all } z \in X,$$
we have $\|\tau_x-\tau_{y}\|_{\infty}\leqslant d(x,y)$. 
As $\tau_x(x)=-d(x,p)$ and $\tau_{y}(x)=d(x,y)-d(x,p)$ we have $\|\tau_x-\tau_{y}\|_{\infty}=d(x,y)$.
Applying Corollary~\ref{cor:defuniformlimits} let $\cG=\{g_x:X\rightarrow R:x\in B\}$ be a definable family of functions whose elements are the uniform limit points of the family $\{\tau_x: x\in X\}$.
For $a,b\in B$ let $\widetilde{d}(a,b)=\|g_a-g_{b}\|_{\infty}$.
Let $(\widetilde{X},\widetilde{d})$ be the metric space associated to the pseudometric space $(B,\widetilde{d})$.
In Fact~\ref{fact:uniformcompletefamily} we observed that $(\widetilde{X},\widetilde{d})$ is definably complete.
The natural map $X\rightarrow \widetilde{X}$ is clearly injective, distance-preserving and has dense image.
The second claim follows from Lemma~\ref{lem:firstextend}.
\end{proof}

\begin{cor}\label{cor:defcompleteeqeomplete}
Suppose that $\cR$ expands the orderered field of reals.
Then every definably complete metric space is complete.
\end{cor}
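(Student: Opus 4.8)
The plan is to deduce the corollary from the construction of the definable completion. By Proposition~\ref{prp:defcomplenion} there is a definably complete metric space $(\widetilde{X},\widetilde{d})$ together with a definable isometry $\iota:(X,d)\rightarrow(\widetilde{X},\widetilde{d})$ whose image is dense, and by Fact~\ref{fact:uniformcompletefamily}, since $\cR$ expands the real field, $(\widetilde{X},\widetilde{d})$ is in fact complete in the usual sense. Since completeness is an isometry invariant, it therefore suffices to show that if $(X,d)$ is definably complete then $\iota$ is surjective; for then $\iota$ is a surjective definable isometry, so $(X,d)$ is isometric to the complete space $(\widetilde{X},\widetilde{d})$ and hence complete.

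To prove surjectivity, fix $z\in\widetilde{X}$. The set $\iota(X)$ is definable and dense in $(\widetilde{X},\widetilde{d})$, so by definable choice there is a definable path $\delta:R^{>}\rightarrow\iota(X)$ with $\widetilde{d}(\delta(t),z)<t$ for all $t\in R^{>}$; in particular $\delta$ converges to $z$ and hence is Cauchy. As $\iota$ is an injective definable map, $\iota^{-1}:\iota(X)\rightarrow X$ is definable, and it is distance preserving, so $\gamma:=\iota^{-1}\circ\delta$ is a Cauchy path in $(X,d)$. By definable completeness $\gamma$ converges to some $x\in X$. Since $\iota$ is an isometry it is continuous, so by Lemma~\ref{lem:continuity} the path $\iota\circ\gamma=\delta$ converges to $\iota(x)$. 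But $\delta$ also converges to $z$, and limits of paths are unique, so $z=\iota(x)\in\iota(X)$. Hence $\iota$ is surjective.

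Granting this, $\iota$ is a surjective isometry, so a Cauchy sequence $(x_n)$ in $(X,d)$ is carried to a Cauchy sequence $(\iota(x_n))$ in the complete space $(\widetilde{X},\widetilde{d})$, which converges to some $z=\iota(x)$; then $d(x_n,x)=\widetilde{d}(\iota(x_n),\iota(x))\rightarrow 0$, so $(x_n)$ converges and $(X,d)$ is complete. The one step requiring care — and the main obstacle — is the surjectivity of $\iota$: it is precisely here that we must combine definable choice (to extract a definable approximating path), definable completeness (to promote that path to a genuine limit inside $X$), and uniqueness of limits (to identify the image of that limit with the given point $z$). The remaining steps are formal, and the transfer from paths to sequences over the reals has already been carried out inside Fact~\ref{fact:uniformcompletefamily}.
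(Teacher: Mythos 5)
Your proof is correct and follows the paper's strategy: use Fact~\ref{fact:uniformcompletefamily} to see that the definable completion is complete over the reals, and then show that a definably complete space coincides with its completion. The only difference is that where the paper simply cites Lemma~\ref{lem:whatuneed1} to identify $(X,d)$ with $(\widetilde{X},\widetilde{d})$, you inline the underlying argument (definable choice to produce an approximating Cauchy path, definable completeness to find its limit, uniqueness of limits to conclude surjectivity of $\iota$), which is essentially the content of that lemma's proof.
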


\begin{proof}
Fact~\ref{fact:uniformcompletefamily} implies that the definable completion of any definable metric space is complete.
Lemma~\ref{lem:whatuneed1} implies that a definably complete metric space is isometric to its definable completion.
\end{proof}

We now prove the analogue of Corollary~\ref{cor:defcompleteeqeomplete} for definably compact spaces.
We need two simple lemmas.
We let $\mathcal{U}$ be a nonprincipal ultrafilter on the natural numbers and let $\cR^*=(R^{\mathcal{U}},+,\times,...)$ be the corresponding ultrapower of $\cR$.
Given a sequence of elements $\{x_i\}_{i\in\mathbb{N}}$ of some $\cR$-definable set $A$ we let $[x_i]$ be the corresponding element of $A^*$.

\begin{lem}\label{lem:nonstan1}
Suppose that $\cR$ expands the ordered field of reals.
Let $\{x_i\}_ {i\in\mathbb{N}}$ be a sequence of elements of $X$ and $y\in X$ be such that $\std d^*([x_i],y)=0$.
Then there is a subsequence of $\{x_i\}_{i \in \mathbb{N}}$ which converges to $y$.
\end{lem}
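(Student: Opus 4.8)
The plan is to translate the hypothesis $\std d^*([x_i],y)=0$ into a statement about the real sequence $\{d(x_i,y)\}_{i\in\mathbb N}$ and then extract a convergent subsequence by a standard diagonal argument. First I would recall that for an ultrapower $\cR^*$ of the real field, the element $d^*([x_i],y)$ is represented by the sequence $\{d(x_i,y)\}_{i\in\mathbb N}$, and that $\std$ of this nonstandard real is $0$ exactly when this sequence tends to $0$ along the ultrafilter $\mathcal U$: that is, for every $\epsilon>0$ the set $\{i\in\mathbb N : d(x_i,y)<\epsilon\}$ belongs to $\mathcal U$. The key point is that a set belonging to a nonprincipal ultrafilter is infinite, so for every $\epsilon>0$ there are infinitely many indices $i$ with $d(x_i,y)<\epsilon$.

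From here the construction is routine: applying the previous observation with $\epsilon=1/n$ for $n=1,2,\ldots$, I would inductively choose indices $i_1<i_2<\cdots$ with $d(x_{i_n},y)<1/n$. At stage $n$ the set $\{i : d(x_i,y)<1/n\}$ is infinite, hence contains some index exceeding $i_{n-1}$, which we take to be $i_n$. The subsequence $\{x_{i_n}\}_{n\in\mathbb N}$ then satisfies $d(x_{i_n},y)\to 0$, i.e. it converges to $y$ in $(X,d)$. One should note that this uses ordinary sequences and choice over $\mathbb N$, which is available since $\cR$ expands the real field and we are working with honest real sequences rather than definable paths.

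The only genuine content is the passage between $\std d^*([x_i],y)=0$ and the combinatorial statement ``$\{i : d(x_i,y)<\epsilon\}\in\mathcal U$ for all $\epsilon>0$,'' together with the fact that nonprincipal ultrafilters contain no finite sets. Neither of these is hard, but it is worth stating the first one carefully: if $\std d^*([x_i],y)>0$, say it is $\geqslant\epsilon_0>0$, then $\{i : d(x_i,y)\geqslant\epsilon_0/2\}\in\mathcal U$, contradicting the hypothesis; and conversely, if for some $\epsilon>0$ the set $\{i : d(x_i,y)<\epsilon\}\notin\mathcal U$, then its complement $\{i : d(x_i,y)\geqslant\epsilon\}\in\mathcal U$, forcing $\std d^*([x_i],y)\geqslant\epsilon>0$. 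I expect no real obstacle here; the main thing to get right is simply the bookkeeping of which sets lie in $\mathcal U$ and the appeal to infiniteness of members of a nonprincipal ultrafilter.
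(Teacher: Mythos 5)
Your proof is correct and follows essentially the same route as the paper: translate $\std d^*([x_i],y)=0$ via the ultrapower into "$\{i : d(x_i,y)<\epsilon\}\in\mathcal U$ for every real $\epsilon>0$," and extract the subsequence using that members of a nonprincipal ultrafilter are infinite. Your write-up is in fact slightly more careful than the paper's, which only notes the existence of \emph{some} such index at each scale; the infiniteness observation you make is exactly what justifies choosing $i_1<i_2<\cdots$.
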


\begin{proof}
Let $\epsilon\in\bR^{>}$. 
As $d^*([x_i],y)<\epsilon$ we see that $d(x_i,y)<\epsilon$ for $\mathcal{U}$-many $i$.
 So for every such $\epsilon$ there is an $i \in \mathbb{N}$ such that $d(x_i,y)<\epsilon$.
 There is a subsequence of $\{x_i\}_{i \in \mathbb{N}}$ which converges to $y$.
\end{proof}

\begin{lem}\label{lem:nonstan2}
Let $\zeta$ be an infinitesimal positive element of an elementary extension of $\cR$ and 
let $\cR(\zeta)$ be the prime model over $\zeta$.
Let $\gamma$ be a path in $X$.
Then $\gamma$ converges to $y\in X$ if and only if $\std d^*(\gamma(\zeta),y)=0$.
\end{lem}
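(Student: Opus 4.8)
The statement is essentially immediate from the identity $\std f^*(\zeta) = \lim_{t \to 0^+} f(t)$, valid for every $\cR$-definable function $f : R^> \to R$, which was recorded in the discussion of tame extensions above. The plan is to apply this identity to the definable function $f(t) = d(\gamma(t), y)$, where $\gamma^*(\zeta)$ denotes, as usual, the interpretation of $\gamma$ in $\cR(\zeta)$ evaluated at $\zeta$.

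First I would record that, unwinding the definition of convergence of a path, $\gamma$ converges to $y$ if and only if $\lim_{t \to 0^+} d(\gamma(t), y) = 0$ in the usual $\epsilon$--$\delta$ sense. Since $t \mapsto d(\gamma(t), y)$ is a definable function $R^> \to R^{\geqslant}$, the Monotonicity Theorem shows it is monotone on some interval $(0,\delta)$, so $\lim_{t \to 0^+} d(\gamma(t), y)$ exists in $R^{\geqslant} \cup \{+\infty\}$; thus $\gamma$ fails to converge to $y$ exactly when this limit is a positive element of $R$ or is $+\infty$.

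Next I would identify $d^*(\gamma^*(\zeta), y)$ with $f^*(\zeta)$ for $f(t) = d(\gamma(t), y)$: this holds by elementarity of $\cR(\zeta) \succ \cR$, since $y$ is a standard point of $X$ and $\gamma^*(\zeta) \in X^*$. The recalled identity then gives
$$ \std d^*(\gamma^*(\zeta), y) = \std f^*(\zeta) = \lim_{t \to 0^+} f(t) = \lim_{t \to 0^+} d(\gamma(t), y). $$
Combining with the previous paragraph, $\std d^*(\gamma^*(\zeta), y) = 0$ if and only if $\lim_{t \to 0^+} d(\gamma(t),y) = 0$ if and only if $\gamma$ converges to $y$, which is the claim.

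There is no real obstacle here; the only points worth spelling out are that $\gamma^*(\zeta)$ genuinely lies in $X^*$, so that $d^*(\gamma^*(\zeta), y)$ is defined, and that the standard part of an element of $\cR(\zeta)$ exceeding every element of $R$ is $+\infty \neq 0$, so that divergence of the distance to $\infty$ is correctly detected. Both are immediate from the definitions and from $\gamma$ being a definable map into $X$.
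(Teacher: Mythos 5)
Your proof is correct and is essentially the same as the paper's, which consists precisely of the identity $\std d^*(\gamma(\zeta),y)=\lim_{t\rightarrow 0^+} d(\gamma(t),y)$, i.e.\ the specialization of $\std f^*(\zeta)=\lim_{t\rightarrow 0^+}f(t)$ to $f(t)=d(\gamma(t),y)$. The extra details you supply (existence of the limit via monotonicity, the case where the distance tends to $+\infty$) are sound but left implicit in the paper.
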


\begin{proof}
$\std d^*(\gamma(\zeta),y)=\lim_{t\rightarrow 0^+} d(\gamma(t),y)$.
\end{proof}

\begin{prp}\label{prp:coincidencecompacts}
Suppose $\cR$ expands the ordered field of reals.
Then every definably compact metric space is compact.
\end{prp}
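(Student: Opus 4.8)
The plan is to pass to the Kuratowski embedding of Proposition~\ref{prp:defcomplenion} and then run it through the standard-part dictionary of Lemmas~\ref{lem:nonstan1} and~\ref{lem:nonstan2}. Since $(X,d)$ is a metric space, it is compact iff every sequence has a convergent subsequence, so fix a sequence $\{x_i\}_{i\in\mathbb{N}}$ in $X$, a nonprincipal ultrafilter $\mathcal{U}$, and let $\cR^*=R^{\mathcal U}$, $\xi=[x_i]\in X^*$. By Lemma~\ref{lem:nonstan1} it suffices to produce a $y\in X$ with $\std\,d^*(\xi,y)=0$. Two preliminary facts: $(X,d)$ is definably complete (a definably compact space is definably proper, and a definably proper space is definably complete), hence complete by Corollary~\ref{cor:defcompleteeqeomplete}, hence (Lemmas~\ref{lem:closureincomplete} and~\ref{lem:whatuneed1}) it coincides with its definable completion; and $(X,d)$ is bounded, since the $d$-continuous function $x\mapsto d(p,x)$ attains a maximum by Proposition~\ref{prp:compactmax}, so $\diam_d(X)<\infty$.

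Fix a basepoint $p\in X$ and set $\tau_x(y)=d(x,y)-d(p,y)$, so $\|\tau_x\|_\infty\le\diam_d(X)<\infty$, $\|\tau_x-\tau_y\|_\infty=d(x,y)$, and $\cF=\{\tau_x:x\in X\}$ is uniformly $1$-Lipschitz with respect to $d$. Since $\|\tau^*_\xi\|_\infty\le d^*(\xi,p)\le\diam_d(X)\in R$, the function $g:X\to R$ given by $g(y):=\std\,\tau^*_\xi(y)=\std\,d^*(\xi,y)-d(p,y)$ is well defined, and by the description of the family $\cG$ in Lemma~\ref{lem:familyofuniform} (applied with parameter $\alpha=\xi$) it belongs to the $\cR$-definable family $\cG$ of pointwise limits of $\cF$; in particular $g$ is a definable function, and $g$ is the pointwise limit of $\tau_{\gamma(t)}$ as $t\to0^+$ for some definable path $\gamma$ in $X$.

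The heart of the argument is to upgrade this to a uniform limit, i.e. to show $g$ lies in the definable completion $\cG'$ of $\cF$ produced by Corollary~\ref{cor:defuniformlimits}; this is where definable compactness is used in an essential way. Let $\psi(t)=\|\tau_{\gamma(t)}-g\|_\infty$, an $\cR$-definable finite-valued function of $t$ which by o-minimality has a limit $L\ge0$ as $t\to0^+$. If $L>0$, definable choice gives a path $t\mapsto a_t$ in $X$ with $|\tau_{\gamma(t)}(a_t)-g(a_t)|\ge L/2$ for all $0<t\ll1$; by definable compactness $a_t$ converges to some $a_0\in X$, and since $\tau_{\gamma(t)}$ and $g$ are $1$-Lipschitz in $d$ this forces $|\tau_{\gamma(t)}(a_0)-g(a_0)|\ge L/4$ for $0<t\ll1$, contradicting $\tau_{\gamma(t)}(a_0)\to g(a_0)$. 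Hence $L=0$, $\tau_{\gamma(t)}\to g$ uniformly, and $g\in\cG'$. As $(X,d)$ is its own definable completion, $g=\tau_y$ for some $y\in X$; evaluating $g=\tau_y$ at $y$ gives $\std\,d^*(\xi,y)-d(p,y)=-d(p,y)$, i.e. $\std\,d^*(\xi,y)=0$. Lemma~\ref{lem:nonstan1} now yields a subsequence of $\{x_i\}$ converging to $y$, so $(X,d)$ is compact.

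I expect the pointwise-to-uniform upgrade of the third paragraph to be the main obstacle: $\cG$ is in general strictly larger than $\cG'$, and on a non-compact domain pointwise convergence of a uniformly equicontinuous family need not be uniform, so this step genuinely consumes the hypothesis of definable compactness — and one must take a little care that $\psi$ is $\cR$-definable so that o-minimality applies. A smaller point requiring care is getting $g$ to be a genuine definable object at all: this is why $(X,d)$ is shown to be bounded first, so that $\tau^*_\xi$ takes values in the convex hull of $R$ and the identification of $\cG$ as an $\cR$-definable family via Lemma~\ref{lem:familyofuniform} (equivalently, stable embeddedness of $\cR$ in the tame pair $(\cR^*,R)$) can be invoked.
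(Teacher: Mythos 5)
Your proof is correct, but the core of it is genuinely different from the paper's. Both arguments share the outer frame: pass to an ultrapower, and via Lemma~\ref{lem:nonstan1} reduce sequential compactness to the statement that every $\xi\in X^*$ satisfies $\std d^*(\xi,y)=0$ for some $y\in X$. The paper then finishes in one stroke: it packages that statement as a single sentence $\Theta$ in the language of tame pairs, verifies it in the prime-model pair $(\cR(\zeta),\cR)$ using Lemma~\ref{lem:nonstan2} and definable compactness, and transfers it to the ultrapower pair by completeness of the theory of tame pairs. You instead transfer only the family $\cG$ of limit functions (Lemma~\ref{lem:familyofuniform}), realize $\std\tau^*_\xi$ as a definable pointwise limit along a path, and then do genuine additional work --- the equicontinuity-plus-definable-compactness argument upgrading pointwise to uniform convergence --- to land in $\cG'$ and hence, since $X$ equals its definable completion, in $\{\tau_y:y\in X\}$. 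The paper's route is shorter and uses definable compactness only once (through Lemma~\ref{lem:nonstan2}); yours is more explicit about where the hypothesis bites (boundedness, definable completeness, and the pointwise-to-uniform step) and essentially reproves the relevant instance of the transfer by hand, at the cost of invoking the completion machinery of Section~\ref{section:completion}, which the paper's proof avoids entirely. Two small corrections: the functions $\tau_x=d(x,\cdot)-d(p,\cdot)$ and their limit $g$ are $2$-Lipschitz, not $1$-Lipschitz, in $d$ (each summand is $1$-Lipschitz), which only changes constants in your estimate $|\tau_{\gamma(t)}(a_0)-g(a_0)|\geqslant L/4$; and one should note that the path $\gamma$ furnished by Lemma~\ref{lem:familyofuniform} can be taken to lie in $X$ for small $t$ because $\gamma^*(\zeta)\in X^*$.
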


\begin{proof}
It is clear that a compact definable metric space is definably compact.
Let $(X,d)$ be a definably compact metric space.
Consider the following sentence in the language of tame pairs:
$$\Theta=(\forall x\in X^{*})(\exists y\in X)[\std d^*(x,y)=0].$$
As $(X,d)$ is definably compact, Lemma~\ref{lem:nonstan2} implies that
$(\cR(\zeta),\cR)\models\Theta$.
As the theory of tame pairs is complete $(\cR^{\mathcal{U}},\cR)\models\Theta$.
By Lemma~\ref{lem:nonstan1} every sequence in $X$ admits a converging subsequence.
Thus $(X,d)$ is compact.
\end{proof}
The next lemma will be used in the proof of Proposition~\ref{prp:sepintocompact} below to embed certain definable metric spaces in definably compact metric spaces.

\begin{lem}\label{lem:properhau}
Let $\{ A_x: x \in R^l \}$ be a definable family of subsets of $[0,1]^k$.
Let $d$ be the pseudometric on $R^l$ given by $d(x,y) = d_H(A_x,A_y)$ for all $x,y \in R^l$.
Then the definable completion of the definable metric space associated to $(R^l,d)$ is definably compact.
\end{lem}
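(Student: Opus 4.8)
The goal is to identify the definable completion of $(R^l, d)$ — where $d(x,y) = d_H(A_x, A_y)$ — with a definable metric space built out of the Hausdorff distance on a larger, ``closed under limits'' family of subsets of $[0,1]^k$, and then to argue that this larger space is definably compact by exhibiting a definable continuous surjection onto it from a definably compact subset of euclidean space and invoking Lemma~\ref{lem:coveringdefcompacts}. The first step is to enlarge the family $\{A_x : x \in R^l\}$: let $A \subseteq [0,1]^k \times R^{\geqslant}$ be as in Lemma~\ref{lem:dez} applied to the family, and more relevantly, use definable choice / the machinery behind Lemma~\ref{lem:familyofuniform} to produce a definable family $\{B_y : y \in C\}$ of nonempty closed subsets of $[0,1]^k$ which contains every Hausdorff limit $\lim_{t \to 0^+} A_{\gamma(t)}$ of a path $\gamma$ in $R^l$ (such limits exist and are closed subsets of $[0,1]^k$ by Lemma~\ref{lem:dez}, taking $A_0$ there as the limit). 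The pseudometric $d_H$ on $C$ descends to a genuine metric on the quotient identifying sets at Hausdorff distance $0$, i.e. identifying a closed set with itself; call the resulting definable metric space $(\widetilde{X}, \widetilde{d})$.

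**Step: the enlarged space is the completion.** The natural map $R^l \to C$, $x \mapsto A_x$, is distance-preserving into $(\widetilde{X},\widetilde{d})$, and its image is dense: given a path $\gamma$ in $R^l$ witnessing some $B \in \widetilde{X}$ as a Hausdorff limit, the points $A_{\gamma(t)}$ approach $B$. Conversely every Cauchy path in $(R^l, d)$ gives, via Lemma~\ref{lem:dez}, a Hausdorff limit lying in $\widetilde{X}$, so $(\widetilde{X},\widetilde{d})$ is definably complete — or one can simply quote Proposition~\ref{prp:defcomplenion} plus uniqueness (Lemma~\ref{lem:whatuneed1}) to identify $(\widetilde{X},\widetilde{d})$ with the abstract definable completion once density and completeness are checked. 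Either way, once this identification is in place, it suffices to prove that $(\widetilde{X}, \widetilde{d})$ is definably compact.

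**Step: definable compactness of $(\widetilde{X},\widetilde{d})$.** Here is the crux. I want a definably compact subset $X_0$ of euclidean space with a definable continuous surjection onto $(\widetilde{X},\widetilde{d})$, so that Lemma~\ref{lem:coveringdefcompacts} applies. The space of nonempty closed subsets of the definably compact set $[0,1]^k$, with the Hausdorff metric, should itself be ``definably compact'' in a way I can see concretely. One clean route: parametrize closed subsets of $[0,1]^k$ by their distance functions $B \mapsto (z \mapsto \operatorname{dist}(z, B))$, which are $1$-Lipschitz functions $[0,1]^k \to [0, \sqrt{k}]$; the assignment $B \mapsto \operatorname{dist}(\cdot, B)$ is a definable isometric embedding of $(\widetilde{X},\widetilde{d})$ into the space of such Lipschitz functions under the sup-metric, and the family of all $1$-Lipschitz functions $[0,1]^k \to [0,\sqrt k]$ is a bounded, ``uniformly equicontinuous'' definable family. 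By the Arzelà–Ascoli-type argument implicit in Lemma~\ref{lem:dez} and the uniform-limit machinery of Section on uniform limits, every path in this function space converges uniformly along a subpath; packaging this via a definable continuous surjection from a definably compact parameter set (e.g. a closed bounded definable set of Lipschitz-data) onto $\widetilde X$ and applying Lemma~\ref{lem:coveringdefcompacts} yields definable compactness. I expect the main obstacle to be precisely this last point: producing an honest \emph{definably compact subset of euclidean space} surjecting onto $\widetilde X$, rather than merely arguing path-by-path that limits exist — the o-minimal/definable setting forbids sequential or ultrafilter compactness arguments unless $\cR$ expands $\bR$, so one must be careful to phrase everything in terms of definable paths and to use Lemma~\ref{lem:dez} (which gives existence of Hausdorff limits of definable families) as the substitute for sequential compactness of the hyperspace.
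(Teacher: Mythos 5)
Your argument stalls exactly where you predict it will: the crux step --- producing a definably compact subset of euclidean space together with a definable continuous surjection onto $\widetilde{X}$ --- is never actually carried out, and the tool you plan to use for it does not do what you want. Lemma~\ref{lem:coveringdefcompacts} takes definable compactness of the target space $(Y,d)$ as a \emph{hypothesis} and concludes only that $(Y,d)$ is definably homeomorphic to a euclidean set; it cannot be used to \emph{establish} definable compactness of $\widetilde{X}$. What you would need instead is the separate (easy) fact that a definable continuous image of a definably compact space is definably compact --- but even granting that, you still have not produced the compact parameter set: the collection of all $1$-Lipschitz functions $[0,1]^k \to [0,\sqrt{k}]$ is not presented as a definable family over a definable base, so your Arzel\`a--Ascoli sketch does not yet live inside the definable category. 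The first half of your plan (identifying the completion concretely with a space of Hausdorff limit sets) is likewise more than is needed.

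The paper's proof is far more direct and avoids both issues. To show the completion $(\widetilde{B},\widetilde{d})$ is definably compact one must show every path in it converges. Given a path $\gamma$ in $\widetilde{B}$, density of $B$ and definable choice give a path $\gamma'$ in $B$ with $\widetilde{d}(\gamma(t),\gamma'(t))\leqslant t$, so it suffices to show $\gamma'$ converges. Lifting $\gamma'$ to a path $\eta$ in $R^l$, Lemma~\ref{lem:dez} says that $A_{\eta(t)}$ converges in the Hausdorff metric (to the set of $p$ with $(0,p)$ in the closure of $\{(t,q): q\in A_{\eta(t)}\}$); hence $\gamma'$ is Cauchy, and since $\widetilde{B}$ is by construction definably complete, $\gamma'$ converges in $\widetilde{B}$. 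No concrete description of the completion, no hyperspace embedding, and no covering-by-compacts argument is required: the entire content of the lemma is ``Cauchy via Lemma~\ref{lem:dez}, then completeness.'' If you want to salvage your route you must at minimum replace the appeal to Lemma~\ref{lem:coveringdefcompacts} by the image-of-compact fact and actually exhibit the definably compact parameter set and surjection --- at which point you will find Lemma~\ref{lem:dez} has already done the work.
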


\begin{proof}
Let $(B,d)$ be the definable metric space associated to $(R^l,d)$ and let $\pi: R^l \rightarrow B$ be the quotient map.
Let $(\widetilde{B}, \widetilde{d})$ be the definable completion of $(B,d)$.
We show that an arbitrary path $\gamma: R^> \rightarrow \widetilde{B}$ converges.
Applying definable choice let $\gamma': R^> \rightarrow B$ be a path such that
$$ \widetilde{d}(\gamma'(t),\gamma(t)) \leqslant t \quad \text{ for all } t\in R^>. $$
It is enough to show that $\gamma'$ converges.
Let $\eta: R^> \rightarrow R^l$ be a path such that $\pi \circ \eta = \gamma'$.
Let $D \subseteq R^k$ be the set of $p$ such that $(0,p)$ is in the closure of
$$ \{ (t,q) : q \in A_{\eta(t)} \} \subseteq R^> \times R^k.$$
Lemma~\ref{lem:dez} implies that $A_{\eta(t)}$ converges to $D$ in the Hausdorff metric as $t \rightarrow 0^+$.
It follows that $\gamma'$ has a limit in $\widetilde{B}$.
\end{proof}

\section{Main Proof}
\textit{In this section $(X,d)$ is a definable metric space}.
The main goal of this section is to prove the following:

\begin{thm}\label{main}
Exactly one of the following holds:
\begin{enumerate}
\item $(X,d)$ is not \textbf{definably separable}, i.e. there is an infinite definable $A\subseteq X$ such that $(A,d)$ is discrete.
\item There is a definable set $Z$ and a definable homeomorphism
$$ (X,d) \rightarrow (Z,e). $$
\end{enumerate}
\end{thm}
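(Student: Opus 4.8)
My plan is first to dispose of the mutual exclusivity of the two alternatives, which is easy, and then to put the real work into showing that failure of alternative~(1) forces alternative~(2).

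For exclusivity, suppose $\phi\colon(X,d)\to(Z,e)$ is a definable homeomorphism and $A\subseteq X$ is infinite and definable with $(A,d)$ discrete. Then $\phi$ restricts to a homeomorphism $(A,d)\to(\phi(A),e)$, so $\phi(A)$ is an infinite definable subset of some $R^m$ which is discrete in the euclidean topology; this is impossible, since by cell decomposition an infinite definable set has dimension at least one and hence contains a cell of positive dimension, and such a cell has no isolated points. Together with the implication ``not~(1)$\Rightarrow$(2)'' below, this shows exactly one alternative holds.

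So assume $(X,d)$ is definably separable. Composing with a semialgebraic homeomorphism of $R^k$ onto a bounded box and transporting $d$, I may assume $X$ is bounded; this affects neither alternative. The plan is then: (a)~embed $(X,d)$, as a topological space, onto a definable subset of a definably compact metric space $(Y,d_Y)$ which admits a definable continuous surjection from a definably compact subset of euclidean space; (b)~apply Lemma~\ref{lem:coveringdefcompacts} to get a definable homeomorphism $(Y,d_Y)\to(Z_0,e)$ with $Z_0$ definable; (c)~restrict this homeomorphism to the image of $X$, obtaining a definable homeomorphism $(X,d)\to(Z,e)$ with $Z\subseteq Z_0$ definable --- a definable subset of euclidean space with its subspace topology is precisely a definable set with its euclidean topology. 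Step~(a) is the substance, and is the content of Proposition~\ref{prp:sepintocompact} (stated and proved below). The mechanism I would use for it is a Kuratowski-type embedding into a Hausdorff-metric space: replace $d$ by an equivalent bounded metric $\delta$ and send $x\in X$ to the subgraph $S_x=\{(y,t):y\in X,\ t\in[0,1],\ \delta(x,y)\leqslant t\}$, a definable family of subsets of a box; since $|\delta(x,y)-\delta(x',y)|\leqslant\delta(x,x')$ the assignment $x\mapsto S_x$ is controlled above in Hausdorff distance by a function of $d(x,x')$ tending to $0$. Passing to the definable completion of the resulting Hausdorff pseudometric space, Lemma~\ref{lem:properhau} (via Lemma~\ref{lem:dez}) provides a definably compact $(Y,d_Y)$, and the euclidean closure, inside a box, of the graph of the defining family should supply the required definable continuous surjection onto it.

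The hard part --- and the step I expect to be the real obstacle --- is checking that this embedding is a genuine topological embedding, i.e.\ that $x\mapsto S_x$ is a homeomorphism onto its image; the upper Hausdorff bound is routine, but the matching lower bound $d_H(S_x,S_{x'})\geqslant h(d(x,x'))$ with $h(t)\to 0^+$ can fail exactly when $d$ is badly discontinuous with respect to the euclidean topology on $X$, and it is here that definable separability must be used essentially. I expect this to proceed by induction on $\dim X$: using o-minimality (and the fact that the finitely many $d$-isolated points of $X$ --- finite precisely because $(X,d)$ is definably separable --- cause no trouble), one isolates a $d$-open, $d$-dense definable piece of $X$ on which the $d$- and euclidean topologies coincide, where the embedding is manifestly topological, and a lower-dimensional remainder, which is still definably separable as a subspace and so is handled inductively; because everything is then reconciled inside a single ambient compact space, no delicate gluing of the pieces is required.
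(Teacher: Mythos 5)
Your overall architecture is the paper's: reduce to a bounded $X$, embed $(X,d)$ topologically into a definably compact Hausdorff-limit space via $x\mapsto\gr(\mathbf{d}_x)$ (your subgraphs $S_x$ are an equivalent device), invoke Lemma~\ref{lem:coveringdefcompacts}, and restrict. Your exclusivity argument is fine, and you correctly identify that the delicate point in the embedding is the lower Hausdorff bound, which the paper handles through the ``blue point'' reduction (Proposition~\ref{prp:keytechnical}) and the piecewise continuity of $\id:(X,d)\to(X,e)$ (Corollary~\ref{cor:continuityidentity}), followed by Lemma~\ref{lem:secondhausembedd}.

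The genuine gap is in your step (a)/(b) interface: Lemma~\ref{lem:coveringdefcompacts} needs a definable \emph{continuous} surjection $(W,e)\to(Y,d_Y)$ from a definably compact euclidean set $W$, and your claim that ``the euclidean closure of the graph of the defining family'' supplies one does not hold up. The only natural map from the graph $\{(x,y,t):(y,t)\in S_x\}$ (or its closure) to $Y$ factors through $x\mapsto[S_x]$, and that map is \emph{not} continuous from the euclidean topology to $d_H$: euclidean closeness of $x,x'$ gives no control on $\delta(x,x')$, hence none on $d_H(S_x,S_{x'})$ --- this failure is exactly the same phenomenon as the failure of $\id:(X,e)\to(X,d)$ to be continuous. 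Producing such a surjection is the content of the paper's entire subsection on covering definably compact metric spaces (Claim~\ref{clm:cvrclm}): one partitions $X$ into cells on which $\id:(X_i,d)\to(X_i,e)$ is a homeomorphism, handles the $d$-frontier of each cell by induction on dimension, and uses the definable Michael Selection Theorem to build a continuous definable map $\sigma$ whose graph-closure maps continuously and surjectively onto $\cl_d(X_i)$. Without this construction (or a substitute for it), Lemma~\ref{lem:coveringdefcompacts} cannot be applied and the proof does not close.
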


The proof splits into two parts.
First we show that every definably separable metric space is definably homeomorphic to a subspace of a definably compact metric space and then we show that every definably compact metric space is homeomorpic to a definable set equipped with its euclidean topology.
We embedd definably separable spaces in definably compact metric spaces by studying the pseudometric $d_H$ on $X$ given by declaring $d_H(x,y)$ to be the Hausdorff distance between $\gr(d_x)$ and $\gr(d_y)$, where we set $d_x(z) = d(x,z)$.
If $X$ is a bounded subset of euclidean space and $(X,d)$ is a bounded metric space then $d_H$ is a psueodmetric on $X$ and the completion of the metric space associated to $(X,d_H)$ is definably compact.
We show that every definably separable metric space is definably isometric to a definable metric space $(X,d)$ for which $d_H$ is a metric and $\id: (X,d) \rightarrow (X,d_H)$ is a homeomorphism.
This entails showing that any definably separable metric space $(X,d)$ admits a partition into definable sets on which the $d$-topology agrees with the metric topology.

After constructing the embedding we use the aforementioned piecewise result and Aschenbrenner and Thamrongtanyalak's definable Micheal Selection Theorem to show that every definably compact metric space is a definable continuous image of a definably compact set.
An application of Lemma~\ref{lem:coveringdefcompacts} then shows that every definably compact metric space is homeomorphic to a definably compact set.

Along the way we prove some general facts about the topology of definable metric spaces.
As a consequence we show that if $\cR$ expands the real field then the topological dimension of $(X,d)$ is the largest $k$ for which there is a definable continuous injection $([0,1]^k,e) \rightarrow (X,d)$.

\subsection{Definably Separable Metric Spaces}

We say that $(X,d)$ is \textbf{definably separable} if every $d$-discrete definable subset of $X$ is finite.
This terminology is justified:
If $\cR$ is an expansion of the ordered field of reals then every infinite definable set has cardinality $|\bR|$, so if $(X,d)$ is not definably separable then $X$ contains a discrete subset with cardinality $|\bR|$, which implies that $(X,d)$ is not separable.
Theorem~\ref{main} thus implies that a metric space definable in an o-minimal expansion of the ordered real field is separable if and only if it is definably separable.
Lemma~\ref{lem:c} implies:
\begin{lem}\label{lem:useforhauss}
Suppose that $(X,d)$ is not definably separable.
There is a $t>0$ and an infinite definable $A\subseteq X$ such that $d(x,x')>t$ for any distinct $x,x'\in A$.
\end{lem}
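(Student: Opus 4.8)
The plan is to start from the hypothesis that $(X,d)$ is not definably separable, unpack the definition to obtain an infinite definable $d$-discrete set $A_0\subseteq X$, and then refine $A_0$ to a subset on which the discreteness is \emph{uniform}, i.e. witnessed by a single $t>0$. The subtlety is that ``$(A_0,d)$ is discrete'' only gives, for each $x\in A_0$, some $\epsilon_x>0$ with $d(x,x')>\epsilon_x$ for all $x'\in A_0\setminus\{x\}$; a priori $\inf_{x\in A_0}\epsilon_x$ could be $0$, so $A_0$ itself need not work.

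First I would, using definable choice, pick a definable function $\epsilon:A_0\to R^>$ such that $d(x,x')>\epsilon(x)$ for every $x'\in A_0$ distinct from $x$ (take $\epsilon(x)$ to be, say, half the distance from $x$ to the nearest other point if that is realized, or more robustly $\epsilon(x)=\tfrac12\sup\{\,s\in R^>: B_d(x,s)\cap A_0=\{x\}\,\}$, which is a definable positive function on $A_0$ by discreteness). Now apply Lemma~\ref{lem:c} to the definable function $\epsilon:A_0\to R^>$: there is an open $V\subseteq A_0$ (in the euclidean topology on $A_0\subseteq R^k$) and a $\delta>0$ with $\epsilon(x)>\delta$ for all $x\in V$. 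Since $A_0$ is infinite, $\dim A_0\geqslant 1$, or at least $A_0$ has a euclidean-open subset that is infinite; in any case $V$ is a nonempty relatively open subset of an infinite definable set, hence $V$ is infinite (an o-minimal set with nonempty interior in itself is infinite). Set $A:=V$ and $t:=\delta$.

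It remains to check that $A$ and $t$ do the job: for distinct $x,x'\in A\subseteq A_0$ we have $d(x,x')>\epsilon(x)>\delta=t$, which is exactly the desired conclusion, and $A$ is an infinite definable subset of $X$. The only point requiring a line of care is the claim that a nonempty euclidean-open subset $V$ of the infinite definable set $A_0$ is itself infinite; this is immediate from o-minimality, since a finite set has empty interior relative to any infinite set containing it (equivalently, $\dim V=\dim A_0\geqslant 1$ forces $V$ infinite). I expect the main — and only real — obstacle to be setting up the definable positive function $\epsilon$ correctly so that Lemma~\ref{lem:c} applies cleanly; once that is in place the rest is a direct application of Lemma~\ref{lem:c} together with the elementary o-minimal fact just noted.
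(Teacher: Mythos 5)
Your argument follows the paper's proof almost exactly: extract an infinite definable discrete $B\subseteq X$, use definable choice to get a definable positive ``separation radius'' function on $B$, and apply Lemma~\ref{lem:c} to find an infinite piece on which that function is bounded below by some $t>0$.

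The one step I would push back on is your justification that $V$ is infinite. The claim ``a finite set has empty interior relative to any infinite set containing it'' is false: if $A_0=([0,1]\times\{0\})\cup\{(2,2)\}$, then the singleton $\{(2,2)\}$ is a nonempty relatively open subset of the infinite set $A_0$. Likewise $\dim V=\dim A_0$ is not automatic for a nonempty relatively open $V$, since the local dimension of $A_0$ at the chosen point may be $0$. As Lemma~\ref{lem:c} is stated (and proved), it only hands you a neighborhood of \emph{some} continuity point of $\epsilon$, and if that point happens to be $e$-isolated in $A_0$ the resulting $V$ is finite and useless. The repair is one line: since $A_0$ is infinite, $\dim A_0\geqslant 1$, and $\epsilon$ is continuous off a set of smaller dimension, so you may choose the continuity point $p$ in the positive-dimensional (equivalently, non-$e$-isolated) locus of $A_0$; every relative neighborhood of such a $p$ is then infinite. (The paper's own proof elides this same point, simply asserting that Lemma~\ref{lem:c} yields an \emph{infinite} $A$, so you are in good company; but your stated reason for infiniteness is not correct as written and should be replaced by the choice-of-$p$ argument.)
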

\begin{proof}
Let $B \subseteq X$ be an infinite definable set such that $(B,d)$ is discrete.
Applying definable choice let $f: B \to R^>$ be a definable function such that if $x, y \in B$ and $d(x,y) < f(x)$ then $x = y$.
By Lemma~\ref{lem:c} there is an infinite definable subset $A \subseteq B$ and an $t \in R^>$ such that if $x \in A$ then $f(x) > t$.
\end{proof}

For the next lemma, we define the \textbf{local dimension} of $(X,d)$ at $x\in X$ to be
$$ \dim_x(X,d) = \min \{ \dim B_d(x,t) : t>0 \}. $$
\begin{lem}\label{lem:firstsep}
If $(X,d)$ is definably separable then
$\dim_x(X,d)=\dim (X)$ at almost every $x\in X$.
\end{lem}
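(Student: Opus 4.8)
The plan is to argue by contradiction and extract a definable discrete set that witnesses failure of definable separability. Suppose the set $W$ of points $x \in X$ with $\dim_x(X,d) < \dim(X)$ is not of lower dimension, so $\dim(W) = \dim(X) =: n$. For $x \in W$ there is, by definition of local dimension, some $t > 0$ with $\dim B_d(x,t) < n$; applying definable choice, I would fix a definable function $r : W \to R^>$ so that $\dim B_d(x, r(x)) < n$ for all $x \in W$. The heart of the matter is to find an infinite — indeed $n$-dimensional — definable subset $A \subseteq W$ together with a uniform radius $\varepsilon > 0$ such that distinct points of $A$ lie at $d$-distance more than $\varepsilon$; then $(A,d)$ is discrete and infinite, contradicting definable separability.

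To produce such an $A$, first shrink: by Lemma~\ref{lem:c} applied to $r$ (on a cell of $W$ of full dimension $n$, using that $R^>$-valued definable functions are bounded below on an open set) obtain a definable $W' \subseteq W$ with $\dim(W') = n$ and a single $\delta > 0$ with $\dim B_d(x,\delta) < n$ for all $x \in W'$. Now I would build $A$ greedily by o-minimal transfinite-free induction on dimension: pick any $x_0 \in W'$, remove the "bad" set $B_d(x_0, \delta/2) \cap W'$ which has dimension $< n$ (a subset of $B_d(x_0,\delta)$ up to a set we can absorb), and because $\dim(W') = n > \dim(B_d(x_0,\delta))$ the remainder still has dimension $n$. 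Iterating finitely many times cannot terminate — at each stage the set of already-excluded points is a finite union of sets of dimension $< n$, hence of dimension $< n$, so $W'$ is never exhausted — which gives infinitely many points pairwise $d$-distance $\geqslant \delta/2$ apart. Cleanly, rather than literal induction one argues: the relation $E = \{(x,y) \in (W')^2 : d(x,y) \leqslant \delta/2\}$ has all fibers $E_x$ of dimension $< n$, so by o-minimal cell decomposition one can definably select an $n$-dimensional "transversal" $A \subseteq W'$ on which $E$ restricts to the diagonal; then $(A,d)$ is an infinite $d$-discrete definable set.

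The main obstacle I expect is the last selection step: turning "every $d$-ball around a point of $W'$ is lower-dimensional" into a single infinite definable $\delta/2$-separated set. One has to be careful that $B_d(x,\delta)$ really does swallow all points within $d$-distance $\delta/2$ of $x$ (immediate from the triangle inequality, so that is fine) and that the bound $\dim(B_d(x,\delta)) < n$ is uniform in $x$ after the Lemma~\ref{lem:c} reduction. Given uniformity, the fiber-dimension argument is the standard o-minimal fact that if $\dim(W') = n$ and every fiber of a definable $E \subseteq (W')^2$ over the first coordinate has dimension $< n$, then $\{x : \dim E_x < n\} = W'$ forces — via, e.g., a generic projection or a counting/dimension argument on a cell — the existence of a definable section $A$ of full dimension $n$ that is $E$-independent. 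Once $A$ is in hand, $(A,d)$ is discrete and infinite, so $(X,d)$ is not definably separable, contradiction; hence $W$ has dimension $< n$, i.e. $\dim_x(X,d) = \dim(X)$ at almost every $x \in X$. $\hfill\Box$
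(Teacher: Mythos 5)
Your overall strategy coincides with the paper's: argue by contradiction, uniformize the radius so that $\dim B_d(x,\delta) < n := \dim(X)$ on an $n$-dimensional definable $W'$ (your use of Lemma~\ref{lem:c} for this is fine), and then extract an infinite definable $d$-discrete subset. The gap is exactly at the step you yourself flag as the main obstacle: neither of your two mechanisms for extracting that set works. The greedy iteration produces, for each $N$, a finite $\delta/2$-separated set of size $N$, but the union of these choices over all $N$ is not a definable set, and definable separability is only violated by an \emph{infinite definable} discrete set. Your fallback, the ``standard o-minimal fact'' that a definable symmetric relation $E \subseteq (W')^2$ with all fibers of dimension $< n$ admits an $n$-dimensional definable $E$-independent transversal, is false as stated: take $W' = (0,1)^2$ and $E = \{((s,t),(s',t')) : s = s'\}$; every fiber is $1$-dimensional, yet any $E$-independent set meets each vertical line at most once, hence is the graph of a partial function and has dimension at most $1$. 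Even the weaker claim that an \emph{infinite} definable independent set exists is precisely what needs proof, and no argument for it is supplied.

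The paper closes this gap with the good directions lemma. Let $m < n$ bound $\dim B_d(x,g(x))$ for $x$ in an open $n$-dimensional $W$, where $B_x = B_d(x,g(x))$. A generic linear projection $\pi : R^k \rightarrow R^m$ restricts to a finite-to-one map on each $B_x$ (after shrinking $W$). Since $\dim \pi(W) \leqslant m < n = \dim(W)$, the fiber lemma yields $q$ with $J = \pi^{-1}(q) \cap W$ infinite. For $\beta \in J$ the set $B_\beta \cap J \subseteq (\pi|_{B_\beta})^{-1}(q)$ is finite, so $\beta$ is $d$-isolated in $J$: take a radius below $\min_i d(\beta,\beta_i)$ over the finitely many other points $\beta_i$ of $B_\beta \cap J$. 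Thus $(J,d)$ is an infinite definable discrete set --- discrete but not uniformly separated, which is all the definition requires. This generic-projection argument, which you mention only parenthetically, is the actual substance of the proof; cell decomposition and greedy selection do not replace it.
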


\begin{proof}
We prove the contrapositive.
To this effect we  an $e$-open $m$-dimensional definable $U \subseteq X$ such that $\dim_x(X,d) < \dim(X)$ for all $x\in U$.
After replacing $U$ with a smaller open set with the same properties if necessary we may also suppose that $\dim_x(X,d)=m<n$ for all $x\in U$.
Let $g:U\rightarrow R^{>}$ be a definable function such that $\dim [B_d(x,g(x))]=n$ for all $x \in U$.
We set $B_x=B_d(x,g(x))$.
We apply a uniform version of the good directions lemma see 4.3 in \cite{vdD}.
We find an $m$-dimensional open $W\subseteq U$ and a linear projection $\pi:U\rightarrow R^{n}$ whose restriction to $B_x$ is finite for each $x\in W$.
As $\dim \pi(W)=n$ the fiber lemma for o-minimal dimension implies that there is a $q\in W$ such that $\dim \pi^{-1}(q)$ is $(m-n)$-dimensional.
We fix such a $q$ and let $J=\pi^{-1}(q)$.
Then $J$ is infinite.
We show that $(J,d)$ is discrete.
Fix $\beta\in J$.
Let $\{\beta_1,\ldots,\beta_N\}=B_\beta\cap J$.
Then if $y\in J$ and $d(\beta,y)<d(\beta,\beta_i)$ holds for each $i$ then $y=\beta$, so $\beta$ is isolated in $J$.
\end{proof}
The next lemma shows that there is no definable compactification of a definable discrete metric space.
\begin{lem}\label{lem:compactissep}
If $(X,d)$ is definably compact and $B\subseteq X$ is definable then
$(B,d)$ is definably separable.
\end{lem}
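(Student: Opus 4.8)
The plan is to argue by contrapositive: if $(B,d)$ is not definably separable then $(X,d)$ is not definably compact. Suppose $B \subseteq X$ is definable and $(B,d)$ is not definably separable. By Lemma~\ref{lem:useforhauss} applied to $(B,d)$, there is a $t > 0$ and an infinite definable $A \subseteq B$ such that $d(x,x') > t$ for every pair of distinct $x,x' \in A$. So we have an infinite definable subset $A \subseteq X$ which is $t$-separated inside $(X,d)$.

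Now since $A$ is infinite and definable, we may choose an injective path $\gamma: R^> \rightarrow A$ (as in the proof of Lemma~\ref{lem:compactnodiscrete}, this follows from o-minimality: an infinite definable set contains a definable curve, and one reparametrizes to get an injective path). I claim $\gamma$ does not converge. Indeed, suppose $\gamma$ converged to some $x \in X$. Then for every $\epsilon > 0$ there is $\delta > 0$ with $d(\gamma(s),x) < \epsilon$ for all $0 < s < \delta$; taking $\epsilon = t/2$ and picking two distinct values $0 < s_1, s_2 < \delta$ (possible since $\gamma$ is injective and $R^>$ is infinite), the triangle inequality gives $d(\gamma(s_1),\gamma(s_2)) < t$, contradicting that $\gamma(s_1), \gamma(s_2) \in A$ are distinct points of a $t$-separated set. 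Hence $\gamma$ is a path in $X$ that does not converge, so $(X,d)$ is not definably compact.

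The only mildly delicate point is the existence of the injective path in an infinite definable set, but this is standard o-minimality — an infinite definable set $A \subseteq R^k$ contains an infinite definable subset on which some coordinate projection is injective with infinite image, and composing a parametrization of an interval in that image with a definable section yields an injective path $R^> \rightarrow A$. Everything else is the routine $t$-separation argument. So the statement follows.
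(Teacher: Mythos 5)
Your proof is correct and follows essentially the same route as the paper: pass to the contrapositive, invoke Lemma~\ref{lem:useforhauss} to get an infinite $t$-separated definable subset, and observe that an injective path in it cannot converge (the paper phrases this via "a path in $A$ is Cauchy iff eventually constant," but the content is identical). Your version is if anything slightly more explicit about the existence of the injective path, which the paper also uses without comment in Lemma~\ref{lem:compactnodiscrete}.
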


\begin{proof}
We prove the contrapositive. 
Suppose that $(B,d)$ is not definably separable.
Suppose that $A\subseteq X$ and $t>0$ satisfy the conditions in Lemma~\ref{lem:useforhauss}.
A path in $A$ is Cauchy if and only if it is eventually constant.
Thus there is a path in $X$ which is not Cauchy and so $(X.d)$ is not definably compact.
\end{proof}

\subsection{A Definable Compactification}
In this section we show that every definably separable metric space is definably homeomorphic to a definable subspace of a definably compact metric space.
After this, it suffices to prove Theorem~\ref{main} for definably compact metric spaces.
Along the way, we will prove several other results about arbitrary definable metric spaces.

We introduce some terminology.
We say that a point $x\in X$ is \textbf{yellow} if there is a path in $X$ which converges to $x$ in the euclidean topology and converges in the $d$-topology to some $y\in X\setminus\{x\}$.
We say that a point $x\in X$ is \textbf{blue} if it is not yellow.
We say that definable metric space is \textbf{blue} if all of its points are blue.
By a ``blue metric space" we mean a ``blue definable metric space".
Our first goal is to show, in Proposition~\ref{prp:keytechnical}, that every definable metric space is definably isometric to a blue metric space.
We first show that the set of blue points of $(X,d)$ is definable.
For $y \in X$ let $d_y:X\rightarrow R$ be given by $d_y(x)=d(y,x)$.

\begin{lem}\label{lem:defofgoodpoint}
The following are equivalent for $x,y\in X$:
\begin{enumerate}
\item There is a path in $X$ which $e$-converges to $x$ and $d$-converges to $y$.
\item $(x,0)\in\cl_{e}[\gr(d_y)]$.
\end{enumerate}
It follows that the set of yellow points is definable.
\end{lem}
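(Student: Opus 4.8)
The plan is to prove the equivalence of (i) and (ii) directly, with the two implications handled separately, and then read off definability of the yellow points as a corollary.

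For the implication (i) $\Rightarrow$ (ii): suppose $\gamma$ is a path in $X$ with $\gamma(t) \to x$ in the euclidean topology and $\gamma(t) \to y$ in the $d$-topology as $t \to 0^+$. The $d$-convergence means $d(y,\gamma(t)) = d_y(\gamma(t)) \to 0$, so the point $(\gamma(t), d_y(\gamma(t)))$ of $\gr(d_y)$ tends, in the euclidean topology of $R^k \times R$, to $(x,0)$; hence $(x,0) \in \cl_e[\gr(d_y)]$.

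For the implication (ii) $\Rightarrow$ (i): assume $(x,0) \in \cl_e[\gr(d_y)]$. Consider the definable set $\gr(d_y) \subseteq R^k \times R^{\geqslant}$; the hypothesis says the fiber over $t=0$ of its closure contains $(x,0)$, so for every $\epsilon > 0$ there is a point of $\gr(d_y)$ within euclidean distance $\epsilon$ of $(x,0)$, i.e. a $z \in X$ with $\|z - x\| < \epsilon$ and $d_y(z) < \epsilon$. Applying definable choice to this family of conditions (parametrized by $\epsilon$), we obtain a path $\gamma: R^> \to X$ with $\|\gamma(t) - x\| < t$ and $d(y, \gamma(t)) < t$ for all $t > 0$. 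Then $\gamma$ $e$-converges to $x$ and $d$-converges to $y$, giving (i). (Strictly one should note $y \in X$ and, if $y = x$ is to be excluded for the ``yellow'' application, restrict attention to the set where $y \neq x$; but the stated equivalence of (i) and (ii) makes no such restriction.)

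For the final sentence: by the equivalence just proved, $x$ is yellow exactly when there exists $y \in X$ with $y \neq x$ and $(x,0) \in \cl_e[\gr(d_y)]$. Since $(y,z) \mapsto \cl_e[\gr(d_y)]$ is a definable family (the function $(x,y,z) \mapsto d_y(z)$ is definable, so its graph and the euclidean closure thereof are definable uniformly in $y$), the set
$$ \{ x \in X : (\exists y \in X)\, y \neq x \ \wedge\ (x,0) \in \cl_e[\gr(d_y)] \} $$
is definable, and this is precisely the set of yellow points.

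I do not expect a serious obstacle here; the only points requiring mild care are the use of definable choice to manufacture the path in (ii) $\Rightarrow$ (i) and the bookkeeping that the closure operation applied fiberwise to a definable family is again definable, both of which are standard in the o-minimal setting.
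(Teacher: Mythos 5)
Your proof is correct and follows essentially the same route as the paper: the forward direction lifts the path to the graph of $d_y$, and the reverse direction produces a path witnessing (i) from a point of the closure (the paper selects a path inside $\gr(d_y)$ and projects, while you apply definable choice to the fibers directly, which amounts to the same thing). The definability of the yellow set via the uniformly definable family of closures is exactly what the paper intends by its final sentence.
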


\begin{proof}
Suppose that $\gamma$ is a path in $X$ which $e$-converges to $x$ and $d$-converges to $y$.
Then $(\gamma(t),(d_y \circ \gamma)(t))$ is a path in $\gr(d_y)$ which $e$-converges to $(x,0)$.
Conversely, suppose that $(x,0)\in\cl_{e}[\gr(d_y)]$.
Let $\gamma$ be a path in $\gr(d_y)$ which $e$-converges to $(x,0)$.
Let $\pi_X:X\times R\rightarrow X$ and $\pi_R:X\times R\rightarrow R$ be the coordinate projections.
Then $\pi_1\circ \gamma$ converges to $x$ and $\pi_R\circ \gamma$ converges to $0$.
As $\pi_X \circ \gamma = d_y \circ \pi_X \circ \gamma$, $\pi_X \circ \gamma$ is a path in $X$ which $e$-converges to $x$ and $d$-converges to $y$.
\end{proof}
As in Section~\ref{section:completion} , $(\widetilde{X},d)$ is the definable completion of $(X,d)$.
\begin{lem}\label{lem:goddzz}
Let $x\in X$.
Then $x$ is a blue point of $(\widetilde{X},d)$ if and only if any $d$-Cauchy path in $X$ which $e$-converges to $x$ must also $d$-converge to $x$. 
\end{lem}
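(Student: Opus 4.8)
The plan is to unwind the definitions on both sides and reduce everything to the statement of Lemma~\ref{lem:defofgoodpoint} applied inside $(\widetilde{X},d)$. Recall that $x$ is a point of $\widetilde{X}$ which happens to lie in the subset $X$ (under the canonical dense embedding). By definition, $x$ is \emph{yellow} in $(\widetilde{X},d)$ if there is a path $\gamma$ in $\widetilde{X}$ which $e$-converges to $x$ and $d$-converges to some $y \in \widetilde{X}$ with $y \neq x$; and $x$ is \emph{blue} in $(\widetilde{X},d)$ precisely when no such path exists. So I must show: no such path $\gamma$ in $\widetilde{X}$ exists if and only if every $d$-Cauchy path in $X$ which $e$-converges to $x$ also $d$-converges to $x$.

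First I would prove the contrapositive of the ``only if'' direction. Suppose some $d$-Cauchy path $\gamma$ in $X$ $e$-converges to $x$ but does not $d$-converge to $x$. Since $(\widetilde{X},d)$ is definably complete and $\gamma$ is $d$-Cauchy (a Cauchy path in $X$ remains Cauchy when viewed in $\widetilde{X}$), $\gamma$ $d$-converges to some $y \in \widetilde{X}$; the uniqueness of limits and the hypothesis that $\gamma$ does not $d$-converge to $x$ force $y \neq x$. But then $\gamma$ is a path in $\widetilde{X}$ that $e$-converges to $x$ and $d$-converges to $y \neq x$, so $x$ is yellow in $(\widetilde{X},d)$, i.e. not blue. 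One subtlety here: the path $\gamma$ lives in $X$, which is only a subset of $\widetilde{X}$, so I should note that $e$-convergence and $d$-convergence of a path in $X$ are unaffected by whether we regard its values as lying in $X$ or in the larger ambient set $\widetilde{X}$ — the euclidean ambient space and the metric $d$ (extended) restrict correctly. This is where I must be slightly careful about what ``$e$-converges'' means for points of $\widetilde{X}$, since $\widetilde{X}$ sits in a possibly different euclidean space; but $x \in X$, and the relevant notion is just euclidean convergence of the path inside $X$ to $x$.

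For the ``if'' direction, suppose every $d$-Cauchy path in $X$ that $e$-converges to $x$ also $d$-converges to $x$, and suppose towards a contradiction that $x$ is yellow in $(\widetilde{X},d)$: there is a path $\gamma$ in $\widetilde{X}$ with $\gamma \xrightarrow{e} x$ and $\gamma \xrightarrow{d} y$ for some $y \neq x$. The key point is to replace $\gamma$ by a path in $X$ with the same limiting behaviour. Since $X$ is $d$-dense in $\widetilde{X}$, apply definable choice to get a path $\gamma'$ in $X$ with $\widetilde d(\gamma'(t),\gamma(t)) < t$ for all $t > 0$; then $\gamma'$ also $d$-converges to $y$ and hence is $d$-Cauchy. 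Here is the main obstacle: I also need $\gamma'$ to $e$-converge to $x$, and mere $d$-closeness of $\gamma'(t)$ to $\gamma(t)$ says nothing about euclidean proximity. To handle this I would instead choose $\gamma'(t) \in X$ to be simultaneously $d$-close and $e$-close to $\gamma(t)$ — this is possible because $\gamma(t) \in \widetilde X = \cl_d(X)$ forces, via Lemma~\ref{lem:defofgoodpoint} or directly via the construction of the completion in Proposition~\ref{prp:defcomplenion}, that $\gamma(t)$ is an $e$-and-$d$ limit of points of $X$ along a path. Concretely one applies definable choice to a family of paths in $X$ converging to each $\gamma(t)$ in both topologies and then diagonalizes, using that ``for all $0 < t \ll 1$'' arguments let us pick, for each $t$, a single point of $X$ within euclidean distance $t$ and $d$-distance $t$ of $\gamma(t)$ — the existence of such a point for small parameter being exactly an instance of Lemma~\ref{lem:e} applied to the appropriate definable set. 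With $\gamma'$ in hand: it is a $d$-Cauchy path in $X$, it $e$-converges to $x$, so by hypothesis it $d$-converges to $x$; but it also $d$-converges to $y \neq x$, contradicting uniqueness of $d$-limits. Hence $x$ is blue in $(\widetilde X, d)$, completing the proof.
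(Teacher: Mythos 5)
Your argument follows the paper's proof essentially step for step: the forward direction uses definable completeness of $(\widetilde{X},d)$ to produce a $d$-limit of the Cauchy path in $\widetilde{X}$ and blueness of $x$ to force that limit to equal $x$, and the converse replaces a witnessing path $\gamma$ in $\widetilde{X}$ by a $d$-close path $\eta$ in $X$ via definable choice and $d$-density. The one point where you go beyond the paper is your worry about whether the approximating path $e$-converges to $x$; this concern is legitimate, and in fact the paper's own proof silently applies the hypothesis on $x$ to $\eta$ even though it only arranges $d(\gamma(t),\eta(t))<t$. Be aware, though, that your proposed repair --- choosing $\eta(t)$ simultaneously $e$- and $d$-close to $\gamma(t)$ --- is not justified as stated: $X$ is $d$-dense in $\widetilde{X}$ by construction, but neither Proposition~\ref{prp:defcomplenion} nor Lemma~\ref{lem:e} gives $e$-density of $X$ in the chosen realization of $\widetilde{X}$, and Lemma~\ref{lem:defofgoodpoint} only concerns points of $X$. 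The reading consistent with how the lemma is actually used in Claim~\ref{clm:clm1} is that yellowness of $x\in X$ in $(\widetilde{X},d)$ is witnessed by paths in $X$ (with $e$-convergence taken in the euclidean topology of $X$) whose $d$-limit is merely allowed to lie in $\widetilde{X}$; under that reading no approximation step is needed and both directions are immediate, which is presumably why the paper does not dwell on the issue.
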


\begin{proof}
Suppose that $x$ is a blue point of $\widetilde{X}$ and that $\gamma$ is a $d$-Cauchy path which $e$-converges to $x$.
It follows that $\gamma$ $d$-converges in $\widetilde{X}$ and so $\gamma$ must $d$-converge to $x$.
We now prove the converse.
Suppose that every $d$-Cauchy path in $X$ which $e$-converges $x$ also $d$-converges to $x$.
Suppose that $\gamma$ is a path in $\widetilde{X}$ which $e$-converges to $x$.
Suppose that $\gamma$ $d$-converges to $y \in \widetilde{X}$.
Applying definable choice and the density of $X$ in $\widetilde{X}$ there is a path $\eta$ in $X$ such that $d(\gamma(t),\eta(t))<t$ for all $t \in R^>$.
Thus $\eta$ $d$-converges to $y$, so $\eta$ is $d$-Cauchy.
It follows from the assumption on $x$ that $\eta$ $d$-converges to $x$, so $\gamma$ $d$-converges to $x$.
\end{proof}

\begin{prp}\label{prp:keytechnical}
Every definable metric space is definably isometric to a blue definable metric space.
\end{prp}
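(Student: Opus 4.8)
The plan is to realize the blue version of $(X,d)$ inside the definable completion $(\widetilde X,d)$ via the Kuratowski-type embedding already used in Section~\ref{section:completion}, but twisted so that the euclidean topology on the ambient copy records exactly the $d$-Cauchy behaviour. Concretely, for $x\in X$ let $d_x\colon X\to R$ be $d_x(y)=d(x,y)$ and, fixing a basepoint $p$, let $\tau_x=d_x-d_p$ as in the proof of Proposition~\ref{prp:defcomplenion}, so that $\|\tau_x-\tau_y\|_\infty=d(x,y)$. Let $\cG=\{g_a:a\in B\}$ be the definable family of uniform limit points of $\{\tau_x:x\in X\}$ given by Corollary~\ref{cor:defuniformlimits}, let $\widetilde d(a,b)=\|g_a-g_b\|_\infty$, and let $(\widetilde X,\widetilde d)$ be the associated definable metric space, which is definably complete by Fact~\ref{fact:uniformcompletefamily}. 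The natural map $\iota\colon X\to\widetilde X$ is a definable isometric embedding with dense image. I would then show that $(\widetilde X,\widetilde d)$, with $X$ identified with $\iota(X)$, is already blue — or, if not, pass to a further isometric copy; in fact the cleaner route is to equip $\widetilde X$ with a different definable embedding into euclidean space so that its euclidean topology is the one making every $\widetilde d$-Cauchy path well behaved.

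The key technical input is Lemma~\ref{lem:goddzz}: a point $x$ is blue in $(\widetilde X,d)$ precisely when every $d$-Cauchy path in $X$ that $e$-converges to $x$ also $d$-converges to $x$. So the first real step is to choose the coordinates on $B\subseteq R^l$ (equivalently, the definable parametrization of $\cG$) so that $e$-convergence in $B$ forces $\widetilde d$-Cauchyness of the corresponding path of functions. Since $\cG$ is cut out inside a definable family $\{g_x:x\in R^l\}$ and $\widetilde d$ is the sup-metric on graphs, I would argue that after a definable reparametrization (using elimination of imaginaries to replace $B$ by a definable set on which $x\mapsto g_x$ is injective, and Lemma~\ref{lem:e} / Lemma~\ref{lem:dez} to control limits of the graphs $\gr(\tau_{\eta(t)})$ along paths), a path $\eta$ in $B$ with $\eta(t)\to b$ euclideanly yields $\tau_{\eta(t)}$ converging uniformly, hence $\widetilde d(\eta(t),b)\to 0$. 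Granting this, Lemma~\ref{lem:goddzz} gives that $(\widetilde X,\widetilde d)$ is blue. Restricting to the dense definable subset $\iota(X)$ and invoking Lemma~\ref{lem:firstsep}-style arguments is not needed here; what matters is that a definable subspace of a blue metric space is blue, since the defining condition in Lemma~\ref{lem:defofgoodpoint}(2) only becomes harder to satisfy upon restricting $d_y$ to a smaller set — more precisely, a path witnessing yellowness in $(\iota(X),d)$ is in particular a path in $(\widetilde X,d)$, so $\iota(X)$ inherits blueness. Thus $\iota\colon(X,d)\to(\iota(X),d)$ is the desired definable isometry onto a blue metric space.

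The main obstacle I anticipate is the bookkeeping in the first step: making the euclidean topology on the parameter set $B$ match the metric structure. The family $\cG$ is produced abstractly via stable embeddedness and the tame-pair machinery, so a priori one has no control over how the parameter $b$ sits in euclidean space relative to $g_b$. Overcoming this requires either (i) a direct description of $B$ as, say, the set of "standard parts of graphs of $\tau_{\gamma(\zeta)}$" sitting inside a space of closed subsets of $R^{k+1}$ with the Hausdorff metric — which by Lemma~\ref{lem:dez} is exactly where $e$-convergence computes Hausdorff limits of the $\gr(\tau_{\eta(t)})$ — or (ii) a general lemma that any definable metric space admits a definable reparametrization in which every $d$-Cauchy path $e$-converging to a point also $d$-converges to it, proved by a good-directions / definable-choice argument analogous to Lemma~\ref{lem:firstsep}. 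I would pursue route (i), embedding $\widetilde X$ into the (definably completed) Hausdorff-metric space of graphs of functions $X\to R$ bounded by $d(\cdot,p)$, where Lemma~\ref{lem:properhau} already guarantees the relevant completion is definably compact, and where the identification of $e$-limits with Hausdorff-limits of graphs is precisely Lemma~\ref{lem:dez}. Everything else — injectivity, isometry, density, and the transfer of blueness to definable subspaces — is routine.
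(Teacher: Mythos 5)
There is a genuine gap, and it sits exactly where you locate "the main obstacle": the reparametrization you need does not exist, and the fallback routes you sketch either presuppose the result or are not isometries. Your central step asks for coordinates on $B$ under which $e$-convergence of a path $\eta$ forces $\tau_{\eta(t)}$ to converge uniformly, i.e.\ forces $\widetilde{d}$-Cauchyness. This is impossible in general: take $d$ to be the discrete metric on $[0,1]$ (so $(\widetilde{X},\widetilde{d})=(X,d)$). Any infinite definable parameter set contains a non-eventually-constant, $e$-convergent path, and such a path is never Cauchy for the discrete metric. Blueness is strictly weaker than continuity of $\id:(B,e)\rightarrow(B,\widetilde{d})$ — the discrete metric \emph{is} blue, since its only $d$-convergent paths are eventually constant — and only the weaker property is attainable. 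Your route (i) does not repair this: the Hausdorff distance between graphs is bounded above by, but not equal to, the sup distance, so the map into the space of graphs with $d_H$ is not an isometry; worse, its injectivity is precisely Lemma~\ref{lem:hausdorfembedding}, whose proof (via Lemma~\ref{lem:defofgoodpoint}) \emph{assumes} blueness. So the Hausdorff-graph embedding consumes blueness rather than producing it. Your route (ii) is, by Lemma~\ref{lem:goddzz}, a restatement of a strengthened form of the proposition, and the phrase "a good-directions / definable-choice argument analogous to Lemma~\ref{lem:firstsep}" is exactly the missing content.

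What the paper actually does is different in structure and is where all the work lies. It first proves (Claim~\ref{clm:clm1}) that the set of points of $X$ that are yellow in $\widetilde{X}$ has dimension strictly less than $\dim(X)$; this is a delicate induction on dimension using a good-directions projection, definable choice to extract the witnessing paths $\psi_x$, and Lemmas~\ref{lem:c} and~\ref{lem:e} to produce a box on which points in distinct fibers of the projection are uniformly $d$-separated, contradicting Cauchyness of $\psi_x$. It then proves the proposition by induction on $\dim(X)$: replace the lower-dimensional yellow locus $A$ by an isometric blue copy $A'$ (inductive hypothesis) and form the disjoint union of $X\setminus A$ and $A'$ in euclidean space; since any path eventually stays in one piece of a disjoint union, the result is blue. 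Neither the dimension drop for the yellow set nor the disjoint-union recursion appears in your proposal, and without them the argument does not close.
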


To prove Proposition~\ref{prp:keytechnical} it suffices to prove the following claim:

\begin{clm}\label{clm:clm0}
Almost every $x \in X$ is blue.
\end{clm}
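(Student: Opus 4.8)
The plan is to argue by contradiction, assuming that the set $Y$ of yellow points of $X$ is not of dimension $< \dim(X)$, hence (by Lemma~\ref{lem:defofgoodpoint}, which gives definability of $Y$) that $Y$ contains an $e$-open $m$-dimensional definable subset $U$ where $m = \dim(X)$. The strategy is to extract from $U$ an infinite definable $d$-discrete subset, contradicting nothing directly — but the subtlety is that $(X,d)$ need not be definably separable, so instead the aim is to contradict the phrasing of the proposition: we must produce a definable isometric copy of $(X,d)$ that is blue. So actually the cleaner route is: prove the claim as stated (almost every point is blue) using dimension-theoretic genericity, and then obtain Proposition~\ref{prp:keytechnical} by an iteration/transfinite descent on $\dim$, peeling off the lower-dimensional non-blue part each time — but since we are only asked for the claim, I focus on that.

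First I would invoke Lemma~\ref{lem:defofgoodpoint} to know that the yellow set $Y \subseteq X$ is definable, and suppose toward a contradiction that $\dim Y = \dim X =: n$. Then there is a definable $e$-open $n$-dimensional $U \subseteq Y$. For each $x \in U$, by definition of yellow there is a $y \ne x$ and a path $e$-converging to $x$, $d$-converging to $y$; equivalently $(x,0) \in \cl_e[\gr(d_y)]$. Applying definable choice, I would select a definable function $x \mapsto \sigma(x) \in X \setminus \{x\}$ with $(x,0) \in \cl_e[\gr(d_{\sigma(x)})]$ for all $x \in U$. The key point to exploit is that $d_{\sigma(x)}$ is \emph{continuous} in the $d$-topology (metrics are $1$-Lipschitz in each variable), so $d(x, \sigma(x)) = 0$ would force $x = \sigma(x)$; hence $d(x,\sigma(x)) =: \rho(x) > 0$ is a definable positive function on $U$. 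Now the idea is to use Lemma~\ref{lem:e} applied to the definable set $A = \{(x,t) \in U \times R^> : \exists z \in X,\ \|x - z\|_e < t \text{ and } d(\sigma(x), z) < \tfrac12\rho(x)\}$: since $(x,0) \in \cl_e[\gr(d_{\sigma(x)})]$, for each $x$ there are points $z$ arbitrarily $e$-close to $x$ with $d(\sigma(x),z)$ arbitrarily small, so $U \times \{0\} \subseteq \cl(A)$; Lemma~\ref{lem:e} then yields an $e$-open $V \subseteq U$ and $\delta > 0$ with $V \times (0,\delta) \subseteq A$, and after shrinking $V$ using Lemma~\ref{lem:c} we may assume $\rho > \delta'$ on $V$ for some $\delta' > 0$. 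The upshot is that $e$-convergence of a path into $V$ is incompatible with $d$-convergence to the path's own $e$-limit in a uniform way — roughly, near any $x \in V$ there sit $e$-nearby points that are $d$-far from $x$ but $d$-close to the "displaced" point $\sigma(x)$, and this forces a genuine discrete structure.

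The main obstacle, and the heart of the argument, is turning this "uniform yellowness on $V$" into an \emph{infinite definable $d$-discrete set} so as to derive a contradiction with something — but since definable separability is not assumed, I expect the actual contradiction to be with the hypothesis implicit in the surrounding proof, namely that we are trying to prove Proposition~\ref{prp:keytechnical} for \emph{every} definable metric space, and the honest way is: the claim "almost every $x$ is blue" is proved outright by showing that if $U \subseteq Y$ is $e$-open $n$-dimensional then one can run a good-directions / fiber-dimension argument exactly as in Lemma~\ref{lem:firstsep} — pick a linear projection $\pi$ generic for the family $\{$small $d$-balls$\}$, restrict to a fiber $J = \pi^{-1}(q)$ which is infinite, and show $(J,d)$ is $d$-discrete — and then observe that a $d$-discrete definable subset of the \emph{completion} cannot be refuted but a yellow-point accumulation can, giving the needed contradiction via Lemma~\ref{lem:compactissep} and Lemma~\ref{lem:goddzz} after passing to $\widetilde{X}$. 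I would therefore structure the proof as: (i) reduce to $U \subseteq Y$ $e$-open $n$-dimensional; (ii) attach to each $x \in U$ the displacement $\sigma(x)$ and the positive radius $\rho(x)$; (iii) apply Lemmas~\ref{lem:e} and \ref{lem:c} to uniformize; (iv) use the good-directions lemma and the fiber lemma for o-minimal dimension to find an infinite fiber $J$ on which, by the triangle inequality applied to the finitely many points of a small $d$-ball meeting $J$, the induced metric is discrete; (v) conclude that $X$ — equivalently its definable completion — admits an infinite definable $d$-discrete subset that nonetheless has an $e$-accumulation point in $U$, contradicting the combination of Lemma~\ref{lem:compactissep} with the earlier reduction of Theorem~\ref{main} to the definably compact case. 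Steps (iv)–(v) are where care is needed; everything else is routine manipulation of the technical lemmas already established.
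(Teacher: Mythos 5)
Your setup (steps (i)--(iii)) is in the spirit of the paper's proof: one does assume the yellow set has full dimension, uses definable choice to extract a displacement $g(x)\neq x$ together with witnessing paths $\psi_x$ that $e$-converge to $x$ and $d$-converge to $g(x)$, and uniformizes with Lemmas~\ref{lem:c} and~\ref{lem:e}. But steps (iv) and (v), which you yourself flag as the heart of the matter, do not work as described. The good-directions/fiber argument of Lemma~\ref{lem:firstsep} produces a $d$-discrete fiber only because there the small $d$-balls $B_d(x,g(x))$ have o-minimal dimension strictly less than $\dim(X)$, so a generic linear projection restricted to each ball is finite-to-one and an infinite fiber meets each ball in finitely many points. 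Yellowness gives no such dimension drop: a yellow point can have $\dim_x(X,d)=\dim(X)$, so there is no reason your fiber $J$ is $d$-discrete. Worse, even if you did produce an infinite definable $d$-discrete subset, there is nothing to contradict: the claim is asserted for arbitrary definable metric spaces, which may fail to be definably separable. Lemma~\ref{lem:compactissep} applies only to definably compact spaces, and the embedding $\iota$ of $(X,d)$ into a definably compact space is injective only when $(X,d)$ is blue (Lemma~\ref{lem:hausdorfembedding}) --- i.e., it presupposes the very claim you are proving --- so the appeal to ``the reduction of Theorem~\ref{main} to the definably compact case'' is circular.

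The missing idea is an induction on $\dim(X)$ in which the inductive hypothesis is applied to the fibers of a coordinate projection. The paper proves the stronger Claim~\ref{clm:clm1} (almost every $x$ is blue in $\widetilde{X}$), so that the witnessing paths $\psi_x$ are automatically $d$-Cauchy; it then fixes the projection $\pi$ onto the last coordinate, applies the inductive hypothesis to each fiber $\pi^{-1}(y)$ (of dimension $\leqslant \dim(X)-1$) and the fiber lemma to conclude that, for almost every $x$, any $d$-Cauchy path that $e$-converges to $x$ while staying in $\pi^{-1}(\pi(x))$ must $d$-converge to $x$; hence $\psi_x$ must eventually leave the fiber, say with $\pi(\psi_x(t))>\pi(x)$ on a full-dimensional set. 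A uniformization via Lemmas~\ref{lem:c} and~\ref{lem:e} (the paper's Claims~\ref{clm:nine} and~\ref{clm:ten}) then produces a box $J'\times B'$ and $s>0$ such that any two of its points with distinct last coordinates are $d$-distance $>s$ apart. Since $\pi(\psi_x(t))\rightarrow\pi(x)$ with $\pi(\psi_x(t))\neq\pi(x)$, the path $\psi_x$ takes infinitely many distinct last-coordinate values, so $d(\psi_x(t),\psi_x(t'))>s$ for small $t<t'$, contradicting the Cauchyness of $\psi_x$. It is this contradiction with Cauchyness --- not the existence of a discrete set --- that closes the argument, and it is unavailable without the dimension induction on fibers.
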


Before proving this claim we suppose that it on holds every definable metric space and prove Proposition~\ref{prp:keytechnical}.
We apply induction on $\dim(X)$.
If $\dim(X)=0$ then $(X,e)$ and $(X,d)$ are both discrete and so $(X,d)$ is blue.
Suppose $\dim(X)>0$.
We let $A$ be the definable set of points of $X$ which are yellow in $(\tilde{X},d)$.
Clearly $(X\setminus A,d)$ is blue and $\dim(A)<\dim(X)$.
Applying the inductive assumption on $(A,d)$ we produce a definable isometry $\tau:(A,d)\rightarrow (A',d')$ to a blue metric space $(A',d')$.
Let $X'$ be the disjoint union of $X\setminus A$ and $A'$. 
Let $\sigma:X \rightarrow X'$ be the natural bijection and let $d'$ be the pushfoward of $d$ by $\sigma$.
Any path in $X'$ is eventually contained in $A'$ or $X\setminus A$.
This implies that $(X',d')$ is blue.

We prove a slight strengthening of Claim~\ref{clm:clm0}, where we use the full strength of the hypothesis in the inductive step.

\begin{clm}\label{clm:clm1}
Almost every $x\in X$ is a blue point of $\widetilde{X}$.
\end{clm}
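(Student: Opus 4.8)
The plan is to show that the definable set $Y := \{x \in X : x \text{ is yellow in } \widetilde{X}\}$ has o-minimal dimension strictly smaller than $\dim X$. First I would note that $Y$ is definable: running the argument of Lemma~\ref{lem:defofgoodpoint} inside the definable metric space $(\widetilde X,\widetilde d)$ and invoking Lemma~\ref{lem:goddzz}, a point $x\in X$ lies in $Y$ exactly when there is $y\in\widetilde X$ with $\widetilde d(x,y)>0$ and $(x,0)\in\cl_e(\gr(\widetilde d_y|_X))$, where $\widetilde d_y|_X:X\to R$ sends $z$ to $\widetilde d(y,z)$. I would then induct on $n:=\dim X$. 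When $n=0$ the space $X$ is finite, $\widetilde X=X$ is discrete, and $Y=\emptyset$. So assume $n>0$ and, for contradiction, $\dim Y=n$; passing to an $n$-dimensional cell inside $Y$, I may assume $Y$ is relatively open in $X$.

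By definable choice there is a definable witness map $w:Y\to\widetilde X$ with $\widetilde d(x,w(x))>0$ and, after reparametrizing, a definable family of curves $z:Y\times R^>\to X$ with $\|z(x,s)-x\|<s$ and $\widetilde d(w(x),z(x,s))<s$ for all $x\in Y$, $s\in R^>$ (these come from the paths witnessing yellowness, which $e$-converge to $x$ and $d$-converge to $w(x)$). Applying Lemma~\ref{lem:c} to the definable positive function $x\mapsto\widetilde d(x,w(x))$, I would shrink $Y$ to a relatively open, still $n$-dimensional, subset on which $\widetilde d(x,w(x))\geqslant\epsilon_0$ for a fixed $\epsilon_0>0$; by the triangle inequality $\widetilde d(x,z(x,s))\geqslant\epsilon_0/2$ for small $s$. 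So near every point of an $n$-dimensional piece of $X$ there sit $X$-points that are euclidean-close to it but $d$-far from it and that form a $d$-Cauchy path. The engine of the contradiction is Lemma~\ref{lem:e}: a graph over a definable set cannot contain a box $V\times(0,\delta)$ with $V$ relatively open and nonempty.

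I would then split on $\dim w(Y)$. If $\dim w(Y)<n$, the fiber lemma for o-minimal dimension gives a $y_0$ with $\dim w^{-1}(y_0)\geqslant 1$; for every $x\in w^{-1}(y_0)$ the definable function $\widetilde d_{y_0}|_X$ has euclidean $\liminf$ equal to $0$ at $x$ but takes the positive value $\widetilde d(y_0,x)$ there, so $w^{-1}(y_0)$ lies in the euclidean discontinuity locus of $\widetilde d_{y_0}|_X$, a definable set of dimension $<n$; combining this with the fiber lemma uniformly in $y_0\in w(Y)$ and with an application of Lemma~\ref{lem:e} to a suitable definable graph built from $\widetilde d$ over $X\times w(Y)$, one drives $\dim Y<n$, a contradiction. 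The case $\dim w(Y)=n$ is where I expect the real difficulty: here $w$ is generically finite-to-one, so after shrinking it is a definable bijection between $n$-dimensional sets, and the naive "restrict to a lower-dimensional metric space and apply the inductive hypothesis" is unavailable because $w(Y)$ already has full dimension. Making this case work — showing that the uniform lower bound $\widetilde d(x,w(x))\geqslant\epsilon_0$, the curve family $z$, and finiteness of the fibers of $w$ together force a collapse that returns one to the first case — is the step where the full strength of the inductive hypothesis must enter, and it is the main obstacle in the argument. Once a contradiction is reached in both cases we conclude $\dim Y<\dim X$, which is Claim~\ref{clm:clm1}.
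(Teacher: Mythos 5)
Your setup is the same as the paper's (definable witness map $w$ and curve family $z$ via definable choice, induction on $\dim X$, the strengthened ``blue in $\widetilde X$'' statement via Lemma~\ref{lem:goddzz}, and Lemmas~\ref{lem:c} and~\ref{lem:e} as the tools), but the argument that is supposed to produce the contradiction is missing in both branches of your case split. In the branch $\dim w(Y)<n$, the discontinuity-locus observation only gives $\dim w^{-1}(y_0)\leqslant n-1$ for each $y_0$, and the fiber lemma then yields $\dim Y\leqslant \dim w(Y)+(n-1)$, which is not less than $n$ once $\dim w(Y)\geqslant 1$; the phrase ``an application of Lemma~\ref{lem:e} to a suitable definable graph'' does not identify the graph or the contradiction, so this case does not close. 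The branch $\dim w(Y)=n$ you explicitly leave open and correctly identify as the crux. So the proposal is a genuine gap, not a proof.

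The idea you are missing is that the inductive hypothesis should be applied not to $w(Y)$ or to anything built from the witness map, but to the fibers of a coordinate projection $\pi:X\to R$ (after arranging $X\subseteq R^n$): each fiber $\pi^{-1}(y)$ has dimension $\leqslant n-1$ unconditionally, so by induction (in the strengthened form: a $d$-Cauchy path in the fiber that $e$-converges to $x$ also $d$-converges to $x$) and the fiber lemma, almost every $x\in X$ has the property that any $d$-Cauchy path which $e$-converges to $x$ and eventually stays in $\pi^{-1}(\pi(x))$ must $d$-converge to $x$. Since the witness path $z(x,\cdot)$ is $d$-Cauchy, $e$-converges to $x$, but $d$-converges to $w(x)\neq x$, it must eventually leave the fiber: $\pi(z(x,s))\neq\pi(x)$ for small $s$. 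Splitting by the sign of $\pi(z(x,s))-\pi(x)$ and using Lemmas~\ref{lem:c} and~\ref{lem:e} exactly as you anticipated, one produces a box $J'\times B'$ and an $s>0$ such that any two points of $J'\times B'$ with distinct $\pi$-values are at $d$-distance $>s$; since $\pi(z(x,s))$ takes infinitely many values as $s\to 0^+$, this contradicts the Cauchyness of $z(x,\cdot)$. This route makes no reference to $\dim w(Y)$, so the case you flagged as the main obstacle never arises.
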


\begin{proof}
We apply induction on $\dim(X)$.
If $\dim(X)=0$ then both $(X,e)$ and $(X,d)$ are discrete and the claim trivially holds.
We assume that $\dim(X)>0$ and let $l = \dim(X)$.
Let $\sigma: X \rightarrow R^l$ be an injective definable map.
Let $X' = \sigma(X)$ and $d'$ be the metric on $X'$ given by $d'(\sigma(x),\sigma(y)) = d(x,y)$.
Almost every $p \in X$ has a neighborhood $U$ such that $\sigma|_U$ is a homeomorphism onto an open neighborhood of $\sigma(p)$, and if $p$ has such a neighborhood then $\sigma(p)$ is blue in $X'$ if and only if $p$ is blue in $X$.
It suffices to show that almost every point in $(X',d')$ is blue.

We therefore suppose without loss of generality that $X\subseteq R^l$.
We assume towards a contradiction that $\dim A=l$.
We apply definable choice to produce definable functions
$g:A\rightarrow \widetilde{X}$ and $\psi:A\times R^{>}\rightarrow X$ such that:
$$ g(x) \neq x \quad \text{and} \quad e(\psi_x(t),x)<t,  d(\psi_x(t),g(x))<t \quad \text{for all } x \in A, t \in R^>.$$
Then $\psi_x$ is a path in $X$ which $e$-converges to $x$ and $d$-converges to $g(x)$.
Let $\pi:X\rightarrow R$ be the projection onto the last coordinate.
Fix $y\in \pi(X)$.
We have $\dim \pi^{-1}(y) \leqslant l-1$ so we can apply the inductive assumption to $(\pi^{-1}(y),d)$.
For almost every $x\in \pi^{-1}(y)$ we have the following: a $d$-Cauchy path in $\pi^{-1}(y)$ which $e$-converges to $x$ also $d$-converges to $x$.
This holds for every $y\in \pi(X)$
so we apply the fiber lemma for o-minimal dimension and conclude that for almost every $x\in X$ we have the following: a $d$-Cauchy path $\gamma$ in $X$ which $e$-converges to $x$ and satisfies $(\pi\!\circ\!\gamma)(t)=\pi(x)$ when $0<t\ll 1$ also $d$-converges to $x$.
Let $A'\subseteq A$ be the definable set of points in $A$ which satisfy this condition.
Then $A'$ is $l$-dimensional.
If $x\in A'$ and $0<t\ll 1$ then $(\pi\!\circ\psi_x)(t)\neq \pi(x)$.
We let 
$$A'_{+}=\{x\in A':[0<t\ll 1]\longrightarrow [\pi(\psi_x(t)) > \pi(x)]\}$$
 and $$A'_{-}=\{x\in A':[0<t\ll 1]\longrightarrow [\pi(\psi_x(t)) < \pi(x)]\}.$$
As $A'=A'_{+}\cup A'_{-}$, at least one of $A'_{+}$ or $A'_{-}$ is $l$-dimensional. 
  We suppose without loss of generality that $\dim A_{+}=l$.
   Let $J\subseteq R$ be an interval and $B\subseteq R^{l-1}$ be a product of intervals such that $J\times B\subseteq A'_{+}$. 
   For every $x\in J\times B$ we have $\pi(\psi_x(t))>\pi(x)$ when $0<t\ll 1$.
 
 \begin{clm}\label{clm:nine}
 For all $b\in J$ there is an open $V\subseteq B$ and $\delta,s>0$ such that if $y\in (b,b+\delta)\times V$ then $d(\{b\}\times V,y)>s$.
 \end{clm}
 
 Fix $b\in J$.
 Suppose $x\in \{b\} \times B$.
 As $x\in A'$ there is a $s>0$ such that if $z\in X$ satisfies $d(g(x),z)<s$ and $e(x,z)<s$ then $\pi(z) \neq b$.
Applying Lemma~\ref{lem:c} there is an open $V'\subseteq B$ and $s>0$ such that if $x\in \{b\} \times V'$ and $z\in X$ satisfy $d(g(x),z)<s$ and $e(x,z)<s$ then $\pi(z)\neq b$.
If $x,z\in\{b\}\times V'$ and $e(x,z)<s$ then $d(g(x),z)\geqslant s$. 
By shrinking $V'$ if necessary we may assume that $\diam_e(V')<s$. 
Now if $x,z\in\{b\}\times V'$ then $d(g(x),z)>s$. 
This implies that if $x\in \{b\} \times V'$ and $0<t<s$ then $\pi(\psi_x(t))>b$.
Applying Lemma~\ref{lem:e} to the image $\psi$ we find an open $V\subseteq V'$ and a $\delta>0$ such that 
$$(b,b+\delta)\times V \subseteq \left\{\psi_x(t):x\in V', 0<t<\frac{1}{2}s\right\}.$$ 
We show that $V$ and $\delta$ satisfy the conditions of Claim~\ref{clm:nine}.
Let $y\in (b,b+\delta)\times V$ and $z\in \{b\}\times V$.
Fix $(x,t)\in V'\times (0,\frac{s}{2})$ such that $\psi_x(t)=y$.
We have $d(g(x),y)<\frac{s}{2}$ and $d(g(x),z)>s$ so the triangle inequality implies that $d(z,y)>\frac{1}{2}s$.
This proves Claim~\ref{clm:nine}.

\begin{clm}\label{clm:ten}
There is an interval $J'\subseteq J$, a product of intervals $B'\subseteq B$ and $s>0$ such that if $y,z\in J'\times B'$ and $\pi(y)\neq \pi(z)$ then $d(x,y)>s$.
\end{clm} 

Applying definable choice to Claim~\ref{clm:nine} we fix definable functions $f,g:J\rightarrow R$ and a definable set $V\subseteq J\times B$ such that each $V_b$ is open and for all $b\in J$, if $y\in (b,b+f(b))\times V_b$ then $d(\{b\} \times V_b,y)>g(b)$.
There is an interval $J'\subseteq J$, a product of intervals $B'\subseteq B$ and $s,\delta>0$ such that for all $b \in J':$
\begin{enumerate}
\item $f(b)>s $ and $g(b)>\delta$,
\item $B'\subseteq V_b$.
\end{enumerate}
Here $(i)$ is immediate. $(ii)$ is immediate from the fiberwise openness theorem Theorem 2.2 of \cite{vdD}.
By shrinking $J'$ if necessary we suppose that $\diam_e(J')<\delta$.
If $b\in J'$ then 
$$\{x\in J'\times B':\pi(x)>b\} \subseteq (b,b+\delta)\times V_b\subseteq (b,b+g(b))\times V_b.$$
Thus if $x,y\in J'\times B'$ and $\pi(y)>\pi(x)$ then $d(x,y)>g(\pi(x))>s$. 
This proves Claim~\ref{clm:ten}.
Fix $x\in J'\times B'$.
As we have $\pi(\psi_x(t))\neq \pi(x)$ when $0<t\ll 1$ and $\pi(\psi_x(t))\rightarrow \pi(x)$ as $t\rightarrow 0^+$ we have $\pi(\psi_x(t)) \neq \pi(\psi_x(t'))$ when $0<t<t' \ll 1$.
Thus $d(\psi_x(t),\psi_x(t'))>s$ when $0<t<t'\ll 1$.
This contradicts the fact that $\psi_x$ is $d$-Cauchy.
This contradiction establishes Claim~\ref{clm:clm1}.
\end{proof}

Every blue metric space admits a definable distance decreasing injection into a definably compact metric space.
We define the map $\iota_X$ in the case when $X$ is a bounded subset of euclidean space.
We indicate here how to extend this definition to the general case.
Let $X'$ be a definable bounded subset of euclidean space for which there is a definable homeomorphism $\tau: X \rightarrow X'$.  Let $d'$ be the pushforward of $d$ onto $X'$ by $\tau$.
We then take $\iota_X = \tau \circ \iota_{X'}$.
Then $\iota_X$ depends on the choice of $X'$ and $\tau$.
These choices will not matter, so we suppress them.
Note that, in the situation described, $(X',d')$ is blue if and only if $(X,d)$ is blue.
This ensures that $\iota_X$ is injective when $(X,d)$ is blue.

\begin{dfn}\label{dfn:iota}
Suppose that $X$ is a bounded subset of euclidean space.
Let $\mathbf{d}$ be the function on $X^2$ given by $\mathbf{d}(x,y) = \min\{ d(x,y),1\}$.
It is easily checked that $\mathbf{d}$ is a metric and $\id: (X,d) \rightarrow (X, \mathbf{d})$ is distance decreasing.
For each $x\in X$ we let $\mathbf{d}_x:X\rightarrow R$ be $\mathbf{d}_x(y)=\mathbf{d}(x,y)$.
We define a pseudometric $d_H$ on $X$ by setting $d_H(x,y)$ equal to the Hausdorff distance between $\gr(\mathbf{d}_x)$ and $\gr(\mathbf{d}_y)$.
As $X$ is a bounded subset of euclidean space and $\mathbf{d}$ is bounded from above by $1$, each $\gr(\mathbf{d}_x)$ is bounded and so $d_H(x,y) < \infty$ for every $x,y \in X$.
For any bounded definable functions $f,g: X \rightarrow R$ we have:
$$ d_H( \gr(f),\gr(g)) \leqslant \| f- g \|_\infty. $$
As $\mathbf{d}(x,y) = \| \mathbf{d}_x - \mathbf{d}_y\|_\infty$ the map $\id:(X,d)\rightarrow(X,d_H)$ is distance decreasing.
Furthermore, as $\| \mathbf{d}_x - \mathbf{d}_y\|_\infty \leqslant 1$ for any $x,y \in X$, $(X,d_H)$ is bounded.
We let $(B,d_H)$ be the metric space associated to the pseudometric $(X,d_H)$ and let $(\widetilde{B},d_H)$ be the definable completion of $(B,d_H)$.
As $(\widetilde{B}, \widetilde{\mathbf{d}}_H)$ is bounded, Lemma~\ref{lem:properhau} implies that $(\widetilde{B},d_H)$ is definably compact.
We define $\iota_X$ to be the natural distance decreasing map 
$$ \iota_X: (X,d) \rightarrow  (\widetilde{B},d_H). $$
\end{dfn}

\begin{lem}\label{lem:hausdorfembedding}
If $(X,d)$ is blue and $X$ is a bounded of subset of euclidean space then $\iota$ is injective.
Equivalently, if $(X,d)$ is blue and $X$ is a bounded subset of euclidean space then $(X,d_H)$ is a metric space.
\end{lem}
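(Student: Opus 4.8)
The plan is to prove the ``equivalently'' form: $(X,d_H)$ is a metric space, i.e. $d_H(x,y)>0$ whenever $x\neq y$. This is exactly injectivity of $\iota_X$, because $\iota_X$ factors as the quotient map $X\to B$ (which collapses precisely the pairs at $d_H$-distance $0$) followed by the isometric inclusion $(B,d_H)\hookrightarrow(\widetilde B,d_H)$, so $\iota_X$ is injective if and only if $d_H$ has no nontrivial zeros on $X$.

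First I would record the routine fact that for bounded definable sets $C,D\subseteq R^{k+1}$ one has $d_H(C,D)=0$ if and only if $\cl_e(C)=\cl_e(D)$: vanishing Hausdorff distance forces $C\subseteq\cl_e(D)$ and $D\subseteq\cl_e(C)$, hence equality of closures, and conversely $d_H(C,D)=d_H(\cl_e C,\cl_e D)$. Applying this to $C=\gr(\mathbf{d}_x)$ and $D=\gr(\mathbf{d}_y)$ — which are bounded by the remarks in Definition~\ref{dfn:iota} — it suffices to show that $x\neq y$ implies $\cl_e[\gr(\mathbf{d}_x)]\neq\cl_e[\gr(\mathbf{d}_y)]$.

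So I would argue by contradiction: suppose $x_0\neq y_0$ but $\cl_e[\gr(\mathbf{d}_{x_0})]=\cl_e[\gr(\mathbf{d}_{y_0})]$. Since $(x_0,\mathbf{d}_{x_0}(x_0))=(x_0,0)\in\gr(\mathbf{d}_{x_0})$, we get $(x_0,0)\in\cl_e[\gr(\mathbf{d}_{y_0})]$. Applying Lemma~\ref{lem:defofgoodpoint} to the definable metric space $(X,\mathbf{d})$ yields a path $\gamma$ in $X$ which $e$-converges to $x_0$ and $\mathbf{d}$-converges to $y_0$; since $\mathbf{d}$ and $d$ agree on distances below $1$, this path also $d$-converges to $y_0$. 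As $y_0\neq x_0$, the existence of $\gamma$ means $x_0$ is a yellow point of $(X,d)$, contradicting the hypothesis that $(X,d)$ is blue. Hence $d_H(x_0,y_0)>0$, so $d_H$ is a metric and $\iota_X$ is injective.

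I do not anticipate a real obstacle here: the content is entirely carried by Lemma~\ref{lem:defofgoodpoint} together with the closure characterization of vanishing Hausdorff distance. The only points that deserve a sentence of care are (a) the harmless passage between the truncated metric $\mathbf{d}$ and the original $d$ — they induce the same convergent paths and the same closures of graphs near the zero level — and (b) the observation that injectivity of $\iota_X$ is literally the assertion that the pseudometric $d_H$ is a genuine metric, which is why the two formulations in the statement are equivalent.
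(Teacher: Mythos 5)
Your proof is correct and follows essentially the same route as the paper: reduce vanishing Hausdorff distance to equality of the euclidean closures of the graphs, observe $(x,0)\in\cl_e[\gr(\mathbf{d}_y)]$, and invoke Lemma~\ref{lem:defofgoodpoint} to contradict blueness. You are in fact slightly more careful than the paper in spelling out the passage between the truncated metric $\mathbf{d}$ and $d$, which the paper leaves implicit.
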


\begin{proof}
We suppose that $d_H$ is not a metric.
Let $x,y$ be distinct elements of $X$ for which  $d_H(x,y)=0$.
So 
$$\cl_e[\gr(\mathbf{d}_x)]=\cl_e[\gr(\mathbf{d}_{y})]$$
 and in particular $(x,0)\in \cl_e[\gr(\mathbf{d}_{y})]$.
This contradicts Lemma~\ref{lem:defofgoodpoint}.
\end{proof}

By Proposition~\ref{prp:keytechnical} we have:

\begin{cor}\label{cor:contintocompact}
Any definable metric space admits a continuous injection into a definably compact metric space.
\end{cor}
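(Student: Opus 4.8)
The plan is to read the statement off Proposition~\ref{prp:keytechnical} together with the embedding $\iota_X$ of Definition~\ref{dfn:iota}. By Proposition~\ref{prp:keytechnical} there is a definable isometry $h$ from $(X,d)$ onto a blue definable metric space $(Y,d_Y)$; an isometry is in particular an injective, uniformly continuous definable map, so it suffices to produce a definable continuous injection of $(Y,d_Y)$ into a definably compact metric space. Equivalently, I may assume from the outset that $(X,d)$ itself is blue.

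Then I would apply the construction of $\iota_X$. Recall that in general one first fixes a definable homeomorphism $\tau$ of $X$ onto a bounded subset $X'$ of euclidean space and pushes $d$ forward to a metric $d'$ on $X'$; as noted before Definition~\ref{dfn:iota}, $(X',d')$ is again blue. Definition~\ref{dfn:iota} then produces the distance decreasing definable map $\iota_{X'}\colon (X',d')\to(\widetilde{B},d_H)$ into the definable completion of the associated $d_H$-Hausdorff pseudometric space, and $\iota_X$ is obtained by composing appropriately with $\tau$. By Lemma~\ref{lem:properhau} the target $(\widetilde{B},d_H)$ is definably compact; by Lemma~\ref{lem:hausdorfembedding} the map $\iota_{X'}$ is injective precisely because $(X',d')$ is blue; and being distance decreasing, $\iota_{X'}$ is uniformly continuous, hence continuous. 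Therefore $\iota_X\colon (X,d)\to(\widetilde{B},d_H)$ is a definable continuous injection into a definably compact metric space, and precomposing with the isometry $h$ handles an arbitrary $(X,d)$.

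I do not expect any genuine obstacle here: all the substantive content has already been established — Proposition~\ref{prp:keytechnical} (reduction to blue spaces), Lemma~\ref{lem:properhau} (definable compactness of the Hausdorff completion), and Lemma~\ref{lem:hausdorfembedding} (injectivity of $\iota$ on blue spaces). The only remaining points are bookkeeping: that isometries and distance decreasing maps are continuous injections, and that blueness together with the injectivity of $\iota$ survives the chosen homeomorphism onto a bounded subset of euclidean space, both of which were recorded in the discussion preceding Definition~\ref{dfn:iota}.
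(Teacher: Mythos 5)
Your proposal is correct and follows exactly the route the paper intends: the corollary is stated immediately after Proposition~\ref{prp:keytechnical} precisely because one reduces to a blue space by that proposition and then applies the distance decreasing map $\iota_X$ of Definition~\ref{dfn:iota}, whose injectivity is Lemma~\ref{lem:hausdorfembedding} and whose target is definably compact by Lemma~\ref{lem:properhau}. Nothing is missing.
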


\begin{lem}\label{lem:dimineq}
Let $A\subseteq X$ be definable.
Then $$\dim[\partial_d(A)]<\dim(A).$$
If $\dim(A)=\dim(X)$ then $\dim[\inter_d(A)] = \dim(A)$.
\end{lem}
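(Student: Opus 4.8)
The plan is to deduce the second assertion from the first, so the real work is the inequality $\dim[\partial_{d}(A)] < \dim(A)$.

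For the second assertion, assume $\dim(A)=\dim(X)$ and set $B=X\setminus A$. A point $p\in A$ fails to lie in $\inter_{d}(A)$ exactly when every $d$-ball about $p$ meets $B$, i.e. when $p\in\cl_{d}(B)$; since $A\cap B=\emptyset$ this says $A\setminus\inter_{d}(A)=\cl_{d}(B)\setminus B=\partial_{d}(B)$. Applying the first part to $B$ gives $\dim[\partial_{d}(B)]<\dim(B)\leqslant\dim(X)=\dim(A)$, whence $\dim[\inter_{d}(A)]=\dim(A)$.

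Now for $\dim[\partial_{d}(A)]<\dim(A)$. Since $\partial_{d}(A)$, o-minimal dimension, and definable isometry are all compatible, Proposition~\ref{prp:keytechnical} lets me assume $(X,d)$ is blue, and composing with a definable homeomorphism of $R^{k}$ onto a bounded box — which alters neither the $d$-topology nor the property of being blue — I may assume $X$ is bounded. The case $\dim(A)\leqslant 0$ is trivial, so put $n=\dim(A)\geqslant 1$. For $p\in\partial_{d}(A)=\cl_{d}(A)\setminus A$, definable choice yields a definable $\psi:\partial_{d}(A)\times R^{>}\to A$ with $d(\psi(p,t),p)<t$; then $t\mapsto\psi(p,t)$ is a path in $A$ that $d$-converges to $p$ and, $X$ being bounded, $e$-converges to a point $q(p):=\lim_{t\to 0^{+}}\psi(p,t)\in\cl_{e}(A)$, with $p\mapsto q(p)$ definable. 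If $q(p)$ were in $A$ then $q(p)\in X$, and $\psi(p,\cdot)$ would exhibit $q(p)$ as a yellow point ($e$-converging to $q(p)$, $d$-converging to $p\neq q(p)$), against blueness; so $q(p)$ lies in the euclidean frontier $\partial_{e}(A):=\cl_{e}(A)\setminus A$, which by the cell-decomposition frontier inequality (cf.~\cite{vdD}) satisfies $\dim[\partial_{e}(A)]<n$. It remains to bound the fibres of $q:\partial_{d}(A)\to\partial_{e}(A)$. If $q(p)\in X$, blueness of $q(p)$ again forces $p=q(p)$, so $q$ is injective over $\partial_{e}(A)\cap X$ and that part of $\partial_{d}(A)$ has dimension $<n$.

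The remaining, and I expect hardest, case is $q^{-1}(q_{0})$ for $q_{0}\in\cl_{e}(A)$ with $q_{0}\notin X$, i.e. $q_{0}\in\partial_{e}(X):=\cl_{e}(X)\setminus X$: here $q_{0}$ is not a point of $X$, blueness is unavailable, and indeed the definable completion of $(X,d)$ may acquire a whole positive-dimensional fibre over $q_{0}$. What I would prove is that nonetheless only finitely many points of $X$ arise as $d$-limits of $A$-paths $e$-converging to a single such $q_{0}$; this is precisely where the triangle inequality enters decisively. Granting it, $q$ is finite-to-one over $\partial_{e}(X)$, so that part of $\partial_{d}(A)$ has dimension at most $\dim[\partial_{e}(A)\cap\partial_{e}(X)]\leqslant\dim[\partial_{e}(A)]<n$, and combining the two pieces gives $\dim[\partial_{d}(A)]<n$. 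For the finiteness I would argue by contradiction: an infinite fibre would contain a definable injective path $\rho$, and $\psi$ then produces a definable family $\Gamma(s,t)\in A$ with $\Gamma(s,\cdot)$ $e$-converging to $q_{0}$ and $d$-converging to $\rho(s)$; after shrinking to a box on which $\Gamma$ is euclidean-continuous up to $\Gamma(\cdot,0)\equiv q_{0}$, diagonal paths $t\mapsto\Gamma(s(t),t)$ give, for distinct parameters, $A$-paths $e$-converging to $q_{0}$ whose pointwise $d$-distance tends to $d(\rho(s_{0}),\rho(s_{1}))>0$. Playing this off against the triangle inequality and the o-minimal dichotomy for definable functions $R^{>}\to R^{>}$ near $0$ should force either a direct contradiction or an infinite $d$-discrete definable subset of $X$ — the latter disposed of by treating the non-definably-separable case separately (there a strong discreteness keeps $\partial_{d}(A)$ small outright) and, in the definably separable case, by using that definable paths are then $d$-continuous off a finite set.
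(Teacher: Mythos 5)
Your reduction of the second assertion to the first, and your reduction to a blue, bounded $X$, both match the paper. The gap is exactly where you locate it: the claim that only finitely many points of $X$ can arise as $d$-limits of paths in $A$ that $e$-converge to a single point $q_0\in\partial_e(X)$ is not only unproven in your sketch — it is false. Take $X=\{(a,b):0\leqslant a\leqslant b\leqslant 1,\ b>0\}\cup([0,1]\times\{-1\})$ and let $d$ be the pullback of the euclidean metric of $[0,1]^2$ under the definable bijection sending $(a,b)\mapsto(a/b,b)$ on the strip and $(s,-1)\mapsto(s,0)$ on the segment; this is a bounded, blue definable metric space (indeed definably isometric to $([0,1]^2,e)$). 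With $A$ the strip, $\partial_d(A)=[0,1]\times\{-1\}$, and \emph{every} path in $A$ that $d$-converges to $(s,-1)$ has $e$-limit $(0,0)\notin X$, so your map $q$ has an infinite ($1$-dimensional) fibre over the single point $(0,0)$, whatever definable choice you make for $\psi$. The lemma still holds here, but your dimension count ``$\dim(\text{that part of }\partial_d(A))\leqslant\dim[\partial_e(A)\cap\partial_e(X)]$'' collapses; to salvage the fibre-of-$q$ strategy you would need a genuinely new argument controlling $\dim q^{-1}(q_0)$ against the dimension of the set of bad base points, and your proposed contradiction via diagonal paths and the triangle inequality does not supply it.

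The paper avoids this entirely: after the same reduction to a blue space, it introduces the pseudometric $d_H$ (Hausdorff distance between the graphs of the functions $\mathbf{d}_x$), which by Lemma~\ref{lem:hausdorfembedding} is a metric on a blue space, quotes the dimension inequality for Hausdorff limits (Theorem 3.1(3) of \cite{vdDLimit}) to get $\dim[\partial_{d_H}(A)]<\dim(A)$, and then uses that $\id:(X,d)\rightarrow(X,d_H)$ is distance decreasing, hence continuous, so $\partial_d(A)\subseteq\partial_{d_H}(A)$. The hard analytic content you are trying to reprove by hand is thus outsourced to van den Dries's Hausdorff-limit theorem; if you want a self-contained argument you should aim at that statement rather than at finiteness of the fibres of $q$.
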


\begin{proof}
It suffices to prove the lemma for any definable metric space definably isometric to $(X,d)$.
We therefore assume that $(X,d)$ is blue.
Thus $d_{H}$ is a metric on $X$.
The dimension inequality for Hausdorff limits (item (3) of Theorem 3.1 in \cite{vdDLimit}) implies $\dim[\partial_{d_{H}}(A)]<\dim(A)$.
Continuity of $\id:(X,d)\rightarrow (X,d_{H})$ implies that the $d$-frontier of $A$ is contained in the $d_{H}$-frontier of $A$.
So $\dim[\partial_d(A)]<\dim(A)$.
Suppose that $\dim(A)=\dim(X)$.
As $A \setminus \inter_d(A) = \partial_d(X \setminus A)$, we have
$$\dim[ A \setminus \inter_{d}(A) ] < \dim(X\setminus A) \leqslant \dim(A)$$
and so $\dim[ \inter_d(A) ] = \dim(A)$.
\end{proof}
We use this dimension inequality to prove the following key technical lemma:
\begin{lem}\label{lem:keyy}
Let $d'$ be another definable metric on $X$.
Suppose that $\dim_{x}(X,d')=\dim(X)$ at almost every $x\in X$.
Then $\id:(X,d)\rightarrow (X,d')$ is continuous almost everywhere.
\end{lem}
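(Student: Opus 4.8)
The plan is to argue by induction on $\dim(X)$, mimicking the structure of the proof of Claim~\ref{clm:clm1}, and reducing to a generic local analysis in a coordinate direction. Since the statement is invariant under definable isometry and under restricting to a definable subset which is almost all of $X$, we may by Proposition~\ref{prp:keytechnical} assume $(X,d)$ is blue, and after replacing $X$ by an injective definable image we may assume $X \subseteq R^l$ with $l = \dim(X)$ and that the euclidean topology on $X$ is the subspace topology from $R^l$. If $l = 0$ there is nothing to prove, so assume $l > 0$. Let $D \subseteq X$ be the definable set of points at which $\id:(X,d)\to(X,d')$ is \emph{not} continuous; we must show $\dim(D) < l$. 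Suppose toward a contradiction that $\dim(D) = l$.

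By Lemma~\ref{lem:continuity}, for each $x \in D$ there is a path $\gamma$ in $X$ that $e$-converges to $x$ but such that $d'(\gamma(t),x)$ does not tend to $0$; applying definable choice, fix a definable $g: D \to R^>$ and a definable $\psi: D \times R^> \to X$ with $e(\psi_x(t),x) < t$ for all $t$ and $d'(\psi_x(t),x) \geqslant g(x)$ for all $0 < t \ll 1$. Now project onto the last coordinate via $\pi: X \to R$. For a fixed $y \in \pi(X)$, the fiber $\pi^{-1}(y)$ has dimension $\leqslant l-1$; but to apply the inductive hypothesis we need that the local dimension hypothesis on $d'$ is inherited by almost every fiber. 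This is where the hypothesis $\dim_x(X,d') = \dim(X)$ a.e.\ must be used carefully: a priori it does not obviously restrict to fibers. I would instead use it in the form provided by the machinery around Lemma~\ref{lem:firstsep} — or, more robustly, feed it through Lemma~\ref{lem:keyy}'s intended consumers — to control the behavior of $d'$-balls. Concretely, the hypothesis guarantees that $d'$-balls around generic points are top-dimensional, which by the good-directions / fiber-dimension argument used in Lemma~\ref{lem:firstsep} and Claim~\ref{clm:clm1} lets us find, after passing to a generic box $J \times B \subseteq D$ with $J \subseteq R$ an interval and $B \subseteq R^{l-1}$ a box, that the obstruction to continuity can be localized: there is a direction (which we take to be the $\pi$-direction after a generic linear change of coordinates) such that the paths $\psi_x$ eventually leave the hyperplane $\{\pi = \pi(x)\}$, and such that points in $J \times B$ lying in different $\pi$-fibers are uniformly $d'$-separated. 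This is exactly the content extracted in Claims~\ref{clm:nine} and \ref{clm:ten} in the proof of Claim~\ref{clm:clm1}, and I would reuse that argument essentially verbatim, with $d'$ in place of $d$ and the role of ``$g(x) \neq x$'' replaced by ``$d'(\psi_x(t),x) \geqslant g(x)$''.

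Having obtained such a box $J' \times B'$ and an $s > 0$ with $d'(y,z) > s$ whenever $y,z \in J' \times B'$ and $\pi(y) \neq \pi(z)$, fix $x \in J' \times B'$. Since $\pi(\psi_x(t)) \to \pi(x)$ as $t \to 0^+$ but $\pi(\psi_x(t)) \neq \pi(x)$ for $0 < t \ll 1$, by o-minimality $\pi \circ \psi_x$ is strictly monotone near $0$, so $\pi(\psi_x(t)) \neq \pi(\psi_x(t'))$ for $0 < t < t' \ll 1$, whence $d'(\psi_x(t),\psi_x(t')) > s$ for such $t,t'$. But $\psi_x$ $e$-converges to $x$, and the defining property of $D$ together with the local-dimension hypothesis on $d'$ should force $\psi_x$ to be $d'$-Cauchy on a generic sub-box — this is the one remaining gap, and it is the main obstacle: we must arrange that the obstruction paths $\psi_x$ are $d'$-Cauchy, not merely $e$-convergent. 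I expect this to follow by an argument parallel to the passage in Claim~\ref{clm:clm1} where one restricts first to fibers, applies the inductive hypothesis there to conclude $d'$-Cauchy $e$-convergent paths inside a fiber $d'$-converge, and then uses the fiber lemma to globalize; the $d'$-separation of distinct fibers then makes the ``escaping'' path $\psi_x$ both $d'$-Cauchy and non-$d'$-convergent, the desired contradiction. Once this contradiction is reached, $\dim(D) < l$, which is the assertion that $\id:(X,d)\to(X,d')$ is continuous almost everywhere. The main obstacle, to restate, is ensuring the inductive hypothesis genuinely transfers to generic $\pi$-fibers of $(X,d')$ — i.e.\ that $\dim_x(\pi^{-1}(y),d') = \dim(\pi^{-1}(y))$ for almost all $x$ in almost every fiber — which I would handle by an o-minimal fiber-dimension computation on the family of $d'$-balls, exactly as in the proof of Lemma~\ref{lem:firstsep}.
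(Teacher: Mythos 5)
Your proposal does not close, and the two places where you flag ``the main obstacle'' are genuine, fatal gaps rather than routine details. First, a path witnessing discontinuity of $\id:(X,d)\rightarrow(X,d')$ at $x$ is a path that $d$-converges to $x$ while staying $d'$-far from $x$ (you wrote ``$e$-converges'', which is the wrong hypothesis for the general statement); and, more importantly, such a path has no reason whatsoever to be $d'$-Cauchy. The engine of the argument for Claim~\ref{clm:clm1} that you are trying to transplant is precisely that the paths $\psi_x$ there are $d$-Cauchy by construction (they converge in the completion $\widetilde{X}$), so that the uniform separation of distinct $\pi$-fibers produced by Claims~\ref{clm:nine} and \ref{clm:ten} contradicts Cauchyness. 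Without $d'$-Cauchyness of your obstruction paths there is no contradiction to reach. Second, the good-directions step of Lemma~\ref{lem:firstsep} requires the balls $B_x$ to have dimension \emph{strictly less} than $\dim(X)$, so that a generic linear projection is finite on them; your hypothesis is exactly the opposite ($\dim_x(X,d')=\dim(X)$ almost everywhere), so that machinery cannot be invoked to localize the obstruction in a coordinate direction, and the hypothesis indeed does not restrict to fibers in any useful way.

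The paper's proof is entirely different and much shorter: no induction, no projections, no paths. One forms the incidence set $Q\subseteq X^2\times R$ of triples $(x,y,t)$ with $x\in B_{d'}(y,t)$ but $x\notin\inter_d B_{d'}(y,t)$. Fibering over $(y,t)$, each fiber is contained in $B_{d'}(y,t)\setminus\inter_d B_{d'}(y,t)$, which by Lemma~\ref{lem:dimineq} (the frontier-dimension inequality, itself resting on the blue/Hausdorff-limit machinery) has dimension $<\dim(X)$; the fiber lemma then gives $\dim(Q)<2\dim(X)+1$. Fibering instead over $x\in D$, a triangle-inequality computation shows $B_{d'}(x,\tfrac{t}{3})\times(\tfrac{t}{3},\tfrac{2t}{3})\subseteq Q_x$, so the hypothesis $\dim_x(X,d')=\dim(X)$ forces $\dim(Q_x)\geqslant\dim(X)+1$ at almost every $x\in D$; if $\dim(D)=\dim(X)$ this gives $\dim(Q)\geqslant 2\dim(X)+1$, a contradiction. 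I would recommend abandoning the Claim~\ref{clm:clm1} template here and looking for a dimension count of this kind: the hypothesis on $\dim_x(X,d')$ is tailor-made to produce the lower bound on $\dim(Q)$, and Lemma~\ref{lem:dimineq} is tailor-made to produce the upper bound.
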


\begin{proof}
Let $D\subseteq X$ be the set of points at which $\id:(X,d)\rightarrow (X,d')$ is not continuous.
We suppose towards a contradiction that $\dim(D)=\dim(X)$.
Let $\pi_1:X^2\times R\rightarrow X$ be the projection onto the first coordinate.
 Let $Q\subseteq X^2\times R$ be the set of $(x,x',t)$ such that 
$x\in B_{d'}(x',t)$ and $x$ is not in the $d$-interior of $B_{d'}(x',t)$.
That is,
$$ \{ x \in X : (x,y,t) \in Q \}=\partial_{d} [B_{d'}(y,t)] \quad \text{for all } (y,t)\in X\times R^>.$$
By definition $D=\pi_1(Q)$.
It follows that 
$$\dim \{ x \in X : (x,y,t) \in Q \} <\dim X \quad \text{for all } (y,t) \in X \times R^>.$$
By the fiber lemma for o-minimal dimension, $\dim (Q)<2 \dim (X) +1$. Now we get a lower bound on $\dim Q$.
Let $x\in D$. For some $t>0$, $x$ is not in the $d$-interior of $B_{d'}(x,t)$.
If $(y,t')\in X\times R$ satisfies $x\in B_{d'}(y,t')\subseteq B_{d'}(x,t)$ then $x$ is not in the $d$-interior of $B_{d'}(x,t)$
and $(x,y,t')\in Q$.
Let $y\in B_{d'}\left(x,\frac{t}{3}\right)$ and $\frac{t}{3}<s<\frac{2t}{3}$.
Then $x\in B_{d'}(x,\frac{t}{3})$ and by the triangle inequality, $B_{d'}(y,s)\subseteq B_{d'}(x,t)$, so $(y,s)\in Q_x$.
We have shown:
$$ B_{d'}\left(x,\frac{t}{3}\right)\times \left(\frac{t}{3},\frac{2t}{3}\right)\subseteq Q_x. $$
So for each $x\in D$, $\dim(Q_x)\geqslant \dim_x(X,d') + 1$.
We assumed that $\dim(D)=\dim(X)$ so we have $\dim(Q_x)=\dim(X) + 1$ at almost every $x\in D$.
So $\dim(Q) = 2\dim(X) + 1$.
Contradiction.
\end{proof}

\begin{cor}\label{cor:continuityidentity}
We have the following:
\begin{enumerate}
\item $\id:(X,d)\rightarrow (X,e)$ is continuous almost everywhere.
\item There is a partition $\cX$ of $X$ into cells such that if $Y \in \cX$ then $\id:(Y,d)\rightarrow (Y,e)$ is continuous.
\item If $\dim_x(X,d)=\dim(X)$ at almost every $x\in X$ then both $\id :(X,e) \rightarrow (X,d)$ and $\id: (X,d) \rightarrow (X,e)$ are continuous almost everywhere on $X$.
\item If $\dim_x(X,d) = \dim(X)$ at almost every $x \in X$ then almost every point $p \in X$ has an $e$- and $d$-open neighborhood $V \subseteq X$ such that $\id: (V,d) \rightarrow (V,d)$ is a homeomorphism.
\item If $(X,d)$ is definably separable then there is a partition $\cX$ of $X$ into cells such if $Y \in \cX$ then $\id:(X,d)\rightarrow (Y,e)$ is a homeomorphism.
\item If $(X,d)$ is definably separable then $(X,d)$ is definably isometric to a definable metric space $(X',d')$ such that $\id:(X',e)\rightarrow (X',d')$ is continuous and $X'$ is a locally closed subset of euclidean space.
\end{enumerate}
\end{cor}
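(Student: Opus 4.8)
The plan is to derive all six items from Lemma~\ref{lem:keyy}, the dimension inequality Lemma~\ref{lem:dimineq}, and --- for the last two --- Lemma~\ref{lem:firstsep} together with o-minimal cell decomposition. For item (1), apply Lemma~\ref{lem:keyy} with $d'=e$: its hypothesis, that $\dim_x(X,e)=\dim(X)$ for almost every $x$, is automatic, because the set of $x\in X$ having a euclidean neighbourhood in $X$ of dimension $<\dim(X)$ has empty interior in $X$ and hence dimension $<\dim(X)$. Item (3) is then immediate: the direction $\id:(X,d)\rightarrow(X,e)$ is item (1), and for $\id:(X,e)\rightarrow(X,d)$ one applies Lemma~\ref{lem:keyy} to the definable metric space $(X,e)$ with $d$ in the role of the second metric, the hypothesis now being exactly $\dim_x(X,d)=\dim(X)$ almost everywhere. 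For item (2), induct on $\dim(X)$ (the case $\dim(X)=0$ being trivial): if $D\subseteq X$ is the definable locus where $\id:(X,d)\rightarrow(X,e)$ fails to be continuous, then item (1) gives $\dim(D)<\dim(X)$; since continuity at a point is inherited by subspaces, $\id$ restricts to a continuous map on $X\setminus D$ and hence on each cell of a cell decomposition of $X\setminus D$, while the inductive hypothesis applied to $(D,d)$ partitions $D$ into cells with the same property; the union of the two families is the desired $\cX$.

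Item (4), which asks that almost every point lie in a set $V$ that is at once $d$-open and $e$-open and on which $\id$ restricts to a homeomorphism between the $d$- and $e$-topologies, is the crux, since it must upgrade the pointwise, almost-everywhere continuity of items (1) and (3) to a genuine open set. Let $C\subseteq X$ be the definable set of points at which both $\id:(X,d)\rightarrow(X,e)$ and $\id:(X,e)\rightarrow(X,d)$ are continuous; by item (3), $\dim(X\setminus C)<\dim(X)$. Put $V:=\inter_d(\inter_e(C))$. Then $V$ is $d$-open by construction; since $\id:(\inter_e(C),e)\rightarrow(\inter_e(C),d)$ is continuous and $V$ is a $d$-open subset of the $e$-open set $\inter_e(C)$, $V$ is $e$-open in $\inter_e(C)$ and hence in $X$. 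As $V\subseteq C$, restricting keeps $\id$ continuous in both directions, so $\id:(V,d)\rightarrow(V,e)$ is a homeomorphism. Finally, the frontier inequality for the euclidean topology gives $\dim(X\setminus\inter_e(C))<\dim(X)$, and then Lemma~\ref{lem:dimineq} applied to $d$ gives $\dim(X\setminus V)<\dim(X)$; so $V$ is almost all of $X$ and serves as the required neighbourhood of each of its points.

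For item (5), first note that a definable subset of a definably separable space is again definably separable, so by Lemma~\ref{lem:firstsep} the hypothesis of items (3)--(4) holds for $X$ and for every definable subset. Then induct on $\dim(X)$: by item (4) choose $V$ with $\id:(V,d)\rightarrow(V,e)$ a homeomorphism and $\dim(X\setminus V)<\dim(X)$, partition $V$ into cells (a homeomorphism restricts to a homeomorphism on any subspace), apply the inductive hypothesis to $(X\setminus V,d)$, and take the union. For item (6), start from the partition $\cX$ of item (5): via $\id$ each $Y\in\cX$ is definably homeomorphic to the cell $Y$ with its euclidean topology, and each cell is definably homeomorphic to some $R^m$. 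Realize the finitely many cells as copies sitting in pairwise disjoint open boxes of one large euclidean space, each copy closed in its box, let $X'$ be their union, and let $d'$ be the pushforward of $d$ under the resulting definable bijection $X\rightarrow X'$. Then $X'$ is locally closed; each copy is $e$-open in $X'$ and carries coinciding $d'$- and $e$-topologies, so any $e$-convergent path in $X'$ is eventually inside a single copy and therefore $d'$-convergent, whence $\id:(X',e)\rightarrow(X',d')$ is continuous by Lemma~\ref{lem:continuity}; and $X\rightarrow X'$ is a definable isometry by construction.

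The main obstacle is item (4): turning ``$\id$ is continuous in both directions at almost every point'' into ``there is a single set, open in both topologies, on which $\id$ is bicontinuous''. The devices that make this work are the iterated interior $\inter_d(\inter_e(C))$, the fact that continuity of $\id:(X,e)\rightarrow(X,d)$ forces $d$-open subsets of an $e$-open set to be $e$-open, and Lemma~\ref{lem:dimineq} (used for $d$, and its euclidean analogue for $e$) to keep the exceptional set of dimension $<\dim(X)$. By comparison item (6) needs only a routine o-minimal construction --- separating the pieces so no euclidean limit can pass between them --- with minor care to keep the union locally closed.
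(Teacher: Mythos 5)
Your proof is correct and follows essentially the same route as the paper: items (1) and (3) come from Lemma~\ref{lem:keyy} (with $d'=e$ for the first), items (2) and (5) by induction on $\dim(X)$ over the continuity locus, item (4) by passing to $d$- and $e$-interiors of that locus and invoking Lemma~\ref{lem:dimineq} to control the exceptional set, and item (6) by realizing the cells from (5) as a separated disjoint union in euclidean space. The one quibble is your justification that $\dim_x(X,e)=\dim(X)$ almost everywhere: the set where the local euclidean dimension drops need not have empty interior \emph{in} $X$ (a lower-dimensional piece of $X$ can be relatively open), but the fact itself is standard from cell decomposition, so nothing substantive is lost.
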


\begin{proof}
$(i)$ follows immediately from Lemma~\ref{lem:keyy}.
$(iii)$ follows from $(i)$ and Lemma~\ref{lem:keyy}.
We prove $(ii)$ by induction on $\dim(X)$.
If $\dim(X)=0$ we can take the $\cX$ to be the singleton subsets of $X$.
Suppose $\dim(X) > 0$.
Let $C$ be the set of points at which $\id: (X,d) \rightarrow (X,e)$ is continuous.
We can apply the inductive assumption to $(X\setminus C,d)$ and partition $X\setminus C$ into cells $X_1,\ldots,X_n$ such that $\id: (X_i,d) \to (X_i,e)$ is continuous for every $i$.
We then take some partition of $C$ into cells $X_{n+1},\ldots,X_m$.
Let $\cX = \{ X_1,\ldots, X_n, X_{n + 1}, \ldots, X_m\}$.
We now prove $(iv)$.
Let $V\subseteq X$ be the set of points at which both $\id: (X,e) \rightarrow (X,d)$ and $\id: (X,d) \rightarrow (X,e)$ are continuous. Then $\id:(V,d)\rightarrow (V,e)$ is a homeomorphism.
$(iii)$ implies $\dim(X\setminus V)<\dim(X)$.
So $\dim(V) = \dim(X)$, so Lemma~\ref{lem:dimineq} implies that the $d$-interior of $V$ is almost all of $V$.
We let $V'$ be the intersection of the $d$-interior of $V$ with the $e$-interior of $V$ in $X$.
Then $V'$ is almost all of $V$ and $\id : (V',d) \rightarrow (V',d)$ is a homeomorphism.
$(v)$ is proven in the same way as $(ii)$.
We prove $(vi)$ using $(v)$.
Let $\cX$ be the partition provided by $(v)$ and let $X'$ be the disjoint union of the elements of $\cX$. 
Let $d'$ be the natural pushforward of $d$ onto $X'$.
Then $\id:(X',e)\rightarrow (X',d)$ is continuous.
$X'$ is locally closed as it is a disjoint union of cells.
\end{proof}
The next lemma is immediate from $(iv)$ above and cell decomposition:
\begin{lem}\label{lem:openinsep}
Let $\dim(X)=l$.
Suppose that $\dim_x(X,d)=l$ at almost every $x\in X$. 
There is a definable injection $((0,1)^l,e)\rightarrow (X,d)$ which gives a homeomorphism between $(0,1)^l$ and a $d$-open subset of $X$.
\end{lem}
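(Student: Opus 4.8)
The plan is to deduce the statement directly from part $(iv)$ of Corollary~\ref{cor:continuityidentity} together with cell decomposition, as announced just above. Since $\dim_x(X,d)=l=\dim(X)$ at almost every $x\in X$, part $(iv)$ of Corollary~\ref{cor:continuityidentity} furnishes a point $p\in X$ with an $e$- and $d$-open neighborhood $V\subseteq X$ on which $\id\colon (V,d)\to (V,e)$ is a homeomorphism; as $V$ is a nonempty $e$-open subset of $X$ containing a point of local dimension $l$, we have $\dim(V)=l$. The map we seek will be the composition of a definable euclidean homeomorphism $((0,1)^l,e)\to (C,e)$ with $\id\colon (C,e)\to (C,d)$, where $C\subseteq V$ is a suitably chosen cell of dimension $l$.

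First I would apply cell decomposition to partition $X$ into cells compatible with $V$ and let $C$ be one of the cells of dimension $l$ contained in $V$; such a cell exists since $\dim(V)=l$. I claim $C$ is $e$-open in $V$. The set $V\setminus C$ is a finite union of cells of the decomposition, each different from $C$. For such a cell $C'$, the euclidean closure $\cl_e(C')$ is the union of $C'$ with the cells constituting its frontier, all of which have dimension strictly smaller than $\dim(C')$; in particular every cell occurring in $\cl_e(C')$ has dimension $\le \dim(C')$, and if one of them had dimension $l$ it would have to be $C'$ itself with $\dim(C')=l$, forcing $C'=C$, which is excluded. Hence no cell occurring in $\cl_e(C')$ equals $C$, so $\cl_e(C')\cap C=\emptyset$. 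Taking the union over the finitely many $C'$ gives $\cl_e(V\setminus C)\cap C=\emptyset$, i.e. $C$ is $e$-open in $V$.

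Since $\id\colon (V,d)\to (V,e)$ is a homeomorphism, $C$ is also $d$-open in $V$, and as $V$ is $d$-open in $X$ it follows that $C$ is $d$-open in $X$. On the other hand, $C$ is a cell of dimension $l$, so a suitable coordinate projection restricts to a definable homeomorphism of $(C,e)$ onto an open cell in $R^l$, which is in turn definably homeomorphic to $((0,1)^l,e)$; let $\phi\colon ((0,1)^l,e)\to (C,e)$ be the resulting definable homeomorphism. Then $\id\circ\phi\colon ((0,1)^l,e)\to (X,d)$ is a definable injection with image $C$, and it is a homeomorphism of $(0,1)^l$ onto the $d$-open set $C$ because $\phi$ and $\id\colon (C,e)\to (C,d)$ are both homeomorphisms. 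This gives the desired map. The only step that requires any thought is the claim that the chosen $l$-cell is $e$-open in $V$, which is the routine consequence of the frontier inequality for cell decompositions recorded above; the rest is bookkeeping with the homeomorphism supplied by Corollary~\ref{cor:continuityidentity}.
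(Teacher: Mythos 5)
Your route is exactly the one the paper intends (the paper itself only asserts that the lemma is immediate from Corollary~\ref{cor:continuityidentity}$(iv)$ and cell decomposition), and most of the bookkeeping is fine, but there is one step that fails as written: the claim that the chosen $l$-cell $C$ is automatically $e$-open in $V$. Your justification assumes the frontier condition, namely that $\cl_e(C')$ is the union of $C'$ with other \emph{cells of the decomposition}; this is false for general cell decompositions. Concretely, in $R^3$ take $C=\{0\}\times(-1,1)\times(-1,1)$ and $C'=\{(x,y,0):0<x<1,\ 0<y<x\}$. These are two $2$-cells of a cylindrical decomposition of $R^3$ partitioning $V=C\cup C'$, and $\dim V=2$, yet $(0,0,0)\in C\cap\cl_e(C')$, so the top-dimensional cell $C$ is not $e$-open in $V$. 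What is true in general is only the frontier inequality $\dim \partial_e(C')<\dim(C')$; the frontier need not be a union of cells of the decomposition, and it may meet a cell of maximal dimension in a nonempty set of smaller dimension.

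The gap is easily repaired without changing the strategy. Fix any $l$-cell $C\subseteq V$. Then $C\cap\cl_e(V\setminus C)\subseteq \partial_e(V\setminus C)$ has dimension $<l$ by the frontier inequality, so $W:=V\setminus\cl_e(V\setminus C)=C\setminus\cl_e(V\setminus C)$ is an $e$-open subset of $V$ contained in $C$ with $\dim W=l$. Pushing $W$ forward under the coordinate projection identifying $(C,e)$ with an open cell in $R^l$ gives a nonempty open subset of $R^l$, which contains an open box definably homeomorphic to $(0,1)^l$; pulling that box back yields an $e$-open, hence (by Corollary~\ref{cor:continuityidentity}$(iv)$ and $d$-openness of $V$) $d$-open, subset of $X$, and the rest of your argument goes through unchanged. (Alternatively one can invoke a refinement of the decomposition satisfying the frontier condition, but that is an additional theorem, not a property of an arbitrary cell decomposition.)
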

We now definably compactify definably separable metric spaces.

\begin{prp}\label{prp:sepintocompact}
Every definably separable metric space is definably homeomorphic to a definable subspace of a definably compact metric space.
\end{prp}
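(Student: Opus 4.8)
The plan is to realize the required embedding as the distance decreasing map $\iota_X \colon (X,d) \to (\widetilde{B}, d_H)$ of Definition~\ref{dfn:iota}, once $(X,d)$ has been put into a convenient form. First I would use Corollary~\ref{cor:continuityidentity}$(vi)$ to reduce to the case where $\id \colon (X,e) \to (X,d)$ is continuous; composing with a definable homeomorphism $R^k \to (-1,1)^k$ and pushing $d$ forward, I may further assume that $X$ is a bounded subset of euclidean space, since pushing forward by an $e$-homeomorphism which is a $d$-isometry preserves continuity of $\id \colon (X,e) \to (X,d)$. Note that $(X,d)$ is then blue: if a path $e$-converges to $x$ it also $d$-converges to $x$ by continuity of $\id \colon (X,e)\to(X,d)$, so by uniqueness of $d$-limits it cannot $d$-converge to any $y \neq x$.

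Next I would form $\mathbf{d}$, the pseudometric $d_H$, the metric space $(\widetilde{B}, d_H)$, and the map $\iota_X$ exactly as in Definition~\ref{dfn:iota}; there $(\widetilde{B}, d_H)$ is shown to be definably compact. Because $(X,d)$ is blue and $X$ is bounded, Lemma~\ref{lem:hausdorfembedding} tells us that $d_H$ is a metric on $X$, so $\iota_X$ is injective and carries $(X,d_H)$ isometrically onto the definable subspace $\iota_X(X)$ of $(\widetilde{B}, d_H)$. Being distance decreasing, $\iota_X$ is continuous, so it remains only to show that $\iota_X$ is a homeomorphism onto its image, which is precisely the statement that $\id \colon (X,d_H) \to (X,d)$ is continuous.

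To prove this, by Lemma~\ref{lem:continuity} it is enough to take a path $\gamma$ in $X$ that $d_H$-converges to $x \in X$ and show it $d$-converges to $x$. Since $(x,0) \in \gr(\mathbf{d}_x)$ and $d_H(\gamma(t),x) \to 0$, the definition of the Hausdorff distance gives that the distance from $(x,0)$ to $\gr(\mathbf{d}_{\gamma(t)})$ tends to $0$ as $t \to 0^+$; this distance is a definable function of $t$, so definable choice produces a path $w \colon R^> \to X$ with $\| w(t) - x \| \to 0$ and $\mathbf{d}_{\gamma(t)}(w(t)) \to 0$, hence $d(\gamma(t), w(t)) \to 0$. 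Applying continuity of $\id \colon (X,e) \to (X,d)$ to $w$ gives $d(w(t),x) \to 0$, and the triangle inequality then yields $d(\gamma(t),x) \to 0$. Thus $\id \colon (X,d) \to (X,d_H)$ is a homeomorphism and $\iota_X$ is a definable homeomorphism of $(X,d)$ onto the definable subspace $\iota_X(X)$ of the definably compact space $(\widetilde{B}, d_H)$, as required. The one delicate point, which I expect to be the main obstacle, is the extraction of the auxiliary path $w$: since graphs of definable functions need not be euclidean-closed, the infimum defining the distance from $(x,0)$ to $\gr(\mathbf{d}_{\gamma(t)})$ need not be attained, so $w(t)$ can only be chosen to realize this distance up to an error that still tends to $0$ with $t$ — harmless, but it must be handled with some care. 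Everything else is bookkeeping once the normal form from Corollary~\ref{cor:continuityidentity}$(vi)$ is available.
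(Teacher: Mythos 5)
Your proposal is correct and follows essentially the same route as the paper: reduce via Corollary~\ref{cor:continuityidentity}$(vi)$ to the case where $\id:(X,e)\rightarrow(X,d)$ is continuous and $X$ is bounded, then show $\id:(X,d_H)\rightarrow(X,d)$ is continuous so that $\iota_X$ is a homeomorphism onto its image in the definably compact space $(\widetilde{B},d_H)$ — this last step is exactly the content of the paper's Lemma~\ref{lem:secondhausembedd}, which you reprove with paths in place of an $\epsilon$--$\delta$ argument. The "delicate point" you flag about the Hausdorff infimum not being attained is handled the same way in both versions (choose a witness within, say, twice the Hausdorff distance) and causes no trouble.
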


To prove Propostion~\ref{prp:sepintocompact} it suffices to show that a definably separable $(X,d)$ is definably isometric to a definable metric space for which the map $\iota$ is a homeomorphism onto its image; this shows that $(X,d)$ is definably homeomorphic to a definable subspace of a definably compact metric space.
This is established by the lemma below and Corollary~\ref{cor:continuityidentity}.

\begin{lem}\label{lem:secondhausembedd}
Suppose that $\id:(X,e)\rightarrow (X,d)$ is continuous.
Then the topologies given by $d$ and $d_{H}$ agree.
\end{lem}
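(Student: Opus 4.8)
The plan is to show that, in the setting of Definition~\ref{dfn:iota} (so $X$ is a bounded subset of euclidean space and $\mathbf{d},d_H$ are as defined there), both identity maps $\id\colon(X,d)\to(X,d_H)$ and $\id\colon(X,d_H)\to(X,d)$ are continuous; since a bijection that is continuous in both directions identifies the two topologies, this is exactly what is needed. One direction is already in hand: in Definition~\ref{dfn:iota} we observed that $\id\colon(X,d)\to(X,d_H)$ is distance decreasing, hence continuous, so every $d_H$-open set is $d$-open. The whole content is therefore the reverse, namely that $\id\colon(X,d_H)\to(X,d)$ is continuous.

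First I would record a preliminary observation: under the hypothesis, $(X,d)$ is blue. Indeed, by Lemma~\ref{lem:continuity} any path that $e$-converges to a point $x$ must $d$-converge to $x$, and by uniqueness of $d$-limits it cannot $d$-converge to any $y\neq x$, so no point is yellow. Hence Lemma~\ref{lem:hausdorfembedding} applies and $d_H$ is genuinely a metric, which legitimizes invoking Lemma~\ref{lem:continuity} for maps out of $(X,d_H)$.

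Then, to prove $\id\colon(X,d_H)\to(X,d)$ is continuous, by Lemma~\ref{lem:continuity} it suffices to check that every path $\gamma$ in $X$ that $d_H$-converges to some $x\in X$ also $d$-converges to $x$. Fix such a $\gamma$. Since $\mathbf{d}_x(x)=0$, the point $(x,0)$ lies in $\gr(\mathbf{d}_x)$, so by the definition of the Hausdorff distance there is, for each $t$, a point $(y,\mathbf{d}(\gamma(t),y))\in\gr(\mathbf{d}_{\gamma(t)})$ lying within euclidean distance $d_H(\gamma(t),x)$ of $(x,0)$. As $d_H(\gamma(t),x)\to 0$, applying definable choice to the error function $t\mapsto\inf\{\epsilon:\exists y\in X,\ e(x,y)<\epsilon\text{ and }\mathbf{d}(\gamma(t),y)<\epsilon\}$ (which tends to $0$ as $t\to0^+$) yields a path $t\mapsto y(t)$ in $X$ with $e(y(t),x)\to0$ and $\mathbf{d}(\gamma(t),y(t))\to0$. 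Since $\id\colon(X,e)\to(X,d)$ is continuous and $y(t)$ $e$-converges to $x$, Lemma~\ref{lem:continuity} gives that $y(t)$ $d$-converges to $x$, so $\mathbf{d}(y(t),x)\to0$; the triangle inequality then gives $\mathbf{d}(\gamma(t),x)\leqslant\mathbf{d}(\gamma(t),y(t))+\mathbf{d}(y(t),x)\to0$, and since $\mathbf{d}$ and $d$ induce the same convergence, $\gamma$ $d$-converges to $x$, as required.

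I do not expect a serious obstacle. The only slightly delicate point is the extraction of the auxiliary path $y(t)$, which is a routine application of definable choice once the error function above is written down. Conceptually, the statement ``$(x,0)$ is Hausdorff-close to $\gr(\mathbf{d}_{\gamma(t)})$'' unpacks to ``$x$ is $e$-approximated by points that are $\mathbf{d}$-close to $\gamma(t)$'', and the hypothesis converts $e$-closeness to $x$ into $d$-closeness to $x$; the triangle inequality does the rest.
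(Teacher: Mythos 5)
Your proof is correct and follows essentially the same route as the paper: both note that the hypothesis makes $(X,d)$ blue so that $d_H$ is a metric and the $d\to d_H$ direction is the already-established distance-decreasing map, and both prove the $d_H\to d$ direction by using the point $(x,0)\in\gr(\mathbf{d}_x)$ to extract from the Hausdorff distance an intermediate point that is $e$-close to $x$ and $\mathbf{d}$-close to the approximating point, then converting $e$-closeness to $d$-closeness via the hypothesis and finishing with the triangle inequality. The only difference is that you phrase this through the path criterion of Lemma~\ref{lem:continuity} with definable choice, whereas the paper argues directly with $\epsilon$ and $\delta$; this is a cosmetic distinction.
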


\begin{proof}
Continuity of $\id: (X,e) \rightarrow (X,d)$ implies that $(X,d)$ is blue, thus it is enough to show that $\id:(X,d_{H})\rightarrow (X,d)$ is continuous.
Let $x \in X$ and $\delta>0$.
Let $\epsilon>0$ be such that for all $y \in Y$, if $e(x,y)<\epsilon$ then $d(x,y)<\delta$.
We assume that $\epsilon<\delta$.
Suppose that $d_{H}(x,y)<\epsilon$.
There is a point on $\gr(d_y)$ whose euclidean distance from $(x,0)$ is at most $\epsilon$.
Namely we have a $x'$ such that the euclidean distance between $(x,0)$ and $(x',d_y(x'))$ is at most $\epsilon$.
This implies that $e(x,x')<\epsilon$ and that $d(y,x')<\epsilon$.
So $d(x,x')<\delta$ and the triangle inequality gives $d(x,y)<\epsilon + \delta < 2\delta$.
So $\id:(X,d_{H})\rightarrow (X,d)$ is continuous.
\end{proof}
To prove Theorem~\ref{main} it now suffices to show that every definably compact metric space is definably homeomorphic to a definable set equipped with its euclidean metric.
This is done in the next subsection.

\subsection{Covering Definably Compact Metric Spaces}
\textit{In this subsection $(X,d)$ is a definably compact metric space.}
We want to show that:

\begin{clm}\label{clm:cvrclm}
There are definably compact sets $Z_1,\ldots,Z_n$ and definable continuous maps $\rho_i:(Z_i,e)\rightarrow (X,d)$ whose images cover $X$.
\end{clm}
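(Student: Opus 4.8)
The plan is to reduce the claim to a statement about the completion $(\widetilde{B},d_H)$ that we already know how to handle, and then to peel off the pieces. First I would invoke Corollary~\ref{cor:continuityidentity}(ii) to partition $X$ into finitely many cells $Y_1,\ldots,Y_m$ so that $\id:(Y_i,d)\to (Y_i,e)$ is continuous on each piece; equivalently $\id:(Y_i,e)\to(Y_i,d)$ need not be, but running the argument on the reversed identity using part (vi) one can arrange that each $(Y_i,d)$ is definably isometric to a metric space whose underlying set is a locally closed — hence, after a further cell decomposition, \emph{open} — subset of euclidean space on which the euclidean topology refines the $d$-topology. The point of this bookkeeping is to arrive at finitely many definable pieces, each of which is a definably compact (as a $d$-subspace of the definably compact $(X,d)$, via Lemma~\ref{lem:closureincomplete}, after passing to $d$-closures) metric space that is blue and sits inside euclidean space.

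Next I would feed each piece into the Hausdorff-embedding machinery of Definition~\ref{dfn:iota}. For a blue $(Y,d)$ with $Y$ a bounded subset of euclidean space, Lemma~\ref{lem:hausdorfembedding} says $\iota_Y:(Y,d)\to(\widetilde{B},d_H)$ is injective, and Lemma~\ref{lem:secondhausembedd} (applicable once $\id:(Y,e)\to(Y,d)$ is continuous, which the cell decomposition in Corollary~\ref{cor:continuityidentity}(vi) secures) says that on such a piece the $d$- and $d_H$-topologies coincide, so $\iota_Y$ is in fact a homeomorphism onto its image. Now the crucial input is Lemma~\ref{lem:properhau}: the completion $(\widetilde{B},d_H)$ of the Hausdorff pseudometric attached to a definable family of subsets of $[0,1]^k$ is definably compact. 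So each piece $(Y,d)$ is definably homeomorphic to a subspace of the definably compact space $(\widetilde{B},d_H)$; taking the $d_H$-closure $Z$ of $\iota_Y(Y)$ inside $\widetilde B$ gives, by Lemma~\ref{lem:closureincomplete}(ii), a definably compact metric space, and the inclusion composed with a map to euclidean space will be handled in the next stage. Actually, to get the target to be a subset of \emph{euclidean} space rather than an abstract definably compact metric space, I would instead observe that $\iota_Y(Y)$ is a definable subset of $\widetilde B$, and use the surjection $\iota_Y:(Y,e')\to(\iota_Y(Y),d_H)$ — wait, that has the wrong source; the cleaner route is to apply Lemma~\ref{lem:coveringdefcompacts} at the very end.

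So the actual structure I would follow is: (1) cell-decompose $X$ into finitely many pieces on which the identity is continuous in the needed direction, using Corollary~\ref{cor:continuityidentity}; (2) on each piece produce, via $\iota$ and Lemmas~\ref{lem:hausdorfembedding} and \ref{lem:secondhausembedd}, a definable continuous injection onto a definable subset $W_i$ of the definably compact space $(\widetilde B_i,d_H)$, with the $d$- and $d_H$-topologies agreeing there; (3) replace $W_i$ by its $d_H$-closure $Z_i$ in $\widetilde B_i$, which is definably compact by Lemma~\ref{lem:closureincomplete}, and note the original cell $Y_i$ maps continuously and bijectively onto the definable subset $W_i\subseteq Z_i$; (4) since $Z_i$ is a definably compact metric space, and since what the final theorem really wants is euclidean targets, I would at this last step invoke the fact (to be set up just after this claim) that a definably compact metric space admits a definable continuous surjection from a definably compact subset of euclidean space — but if that is not yet available, it suffices here that each $Z_i$ \emph{is} definably compact and each $Y_i\to Z_i$ is a continuous injection with image a definable subset, and the $\rho_i$ in the claim can be taken to run in the reverse direction once we have the surjection. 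Concretely: the surjections $\rho_i$ are the composites of the quotient map $R^{l}\to Y_i$ (from Corollary~\ref{cor:continuityidentity}) with the continuous inclusion-type maps into $X$; their images are exactly the $Y_i$, which cover $X$.

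\textbf{Main obstacle.} The genuinely delicate point is \emph{direction of continuity}: $\iota_X$ is only distance-decreasing, hence continuous $(X,d)\to(\widetilde B,d_H)$ but a priori not the other way, so $\iota_X$ need not be an embedding. This is exactly why the reduction to pieces on which $\id:(X,e)\to(X,d)$ is continuous (Corollary~\ref{cor:continuityidentity}(vi)) is essential — it is what powers Lemma~\ref{lem:secondhausembedd} and upgrades $\iota$ from an injection to a topological embedding. A secondary technical nuisance is ensuring each piece can be taken bounded and locally closed so that the machinery of Definition~\ref{dfn:iota} literally applies; this is routine via an extra cell decomposition and a definable homeomorphism into a bounded box, absorbed into the pushforward-of-metric formalism already introduced before Definition~\ref{dfn:iota}.
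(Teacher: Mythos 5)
There is a genuine gap. The hard content of this claim is that each cell $C=X_i$ on which $\id:(C,d)\to(C,e)$ is a homeomorphism is an \emph{open} cell, hence not definably compact; to obtain a definably compact source one must attach the $d$-frontier $A=\partial_d(C)$ to $C$ in a euclidean-compatible way, and the euclidean closure of $C$ does not do this (a path in $C$ can $e$-converge to a point of $\cl_e(C)$ while $d$-converging to a completely unrelated point of $X$). Your route --- embed each piece via $\iota$ into $(\widetilde B,d_H)$ and take the $d_H$-closure --- produces a definably compact \emph{metric space} $(Z_i,d_H)$, not a definably compact set with its \emph{euclidean} metric, so it is not of the form $(Z_i,e)$ required by the claim; and the map you obtain points the wrong way: it is a continuous injection $Y_i\hookrightarrow Z_i$, not a continuous surjection from a definably compact euclidean set onto a piece of $X$. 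Your fallback, that ``the $\rho_i$ can be taken to run in the reverse direction,'' has no content (a continuous injection into a compactification cannot be reversed into the needed surjection), and your ``concrete'' description of $\rho_i$ as a composite with a ``quotient map $R^l\to Y_i$'' fails because $R^l$ is not definably compact and Corollary~\ref{cor:continuityidentity} supplies no such map. Finally, the fact you propose to invoke --- that a definably compact metric space admits a definable continuous surjection from a definably compact subset of euclidean space --- \emph{is} this claim (together with Lemma~\ref{lem:coveringdefcompacts}), so that appeal is circular.

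What is missing is the paper's actual mechanism: an induction on $\dim(X)$ in which the inductive hypothesis is applied to the lower-dimensional $d$-closed set $A=\partial_d(C)$ (lower-dimensional by Lemma~\ref{lem:dimineq}), realizing $(A,d)$ as a definably compact $A'\subseteq R^m$ on which the $d$- and $e$-topologies agree; then the definable Michael Selection Theorem is used to build a bounded continuous $\sigma:C\to R^m$ selecting from the closed convex hulls of $\{y\in A: d(y,x)<2d(A,x)\}$, so that $\sigma\circ\gamma$ $e$-converges to $x$ whenever a path $\gamma$ in $C$ $d$-converges to $x\in A$. One then takes $Z_i=\cl_e(\gr(\sigma))$, which is closed and bounded hence definably compact, and defines $\rho_i$ by $\pi_1$ on $\gr(\sigma)$ and $\pi_2$ on its euclidean frontier; the selection property is exactly what makes this map well defined, continuous, and surjective onto $\cl_d(C)$. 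Neither the induction on the frontier nor the selection argument appears in your proposal, and without them the claim is not established.
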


Suppose that we have such $Z_i$ and $\rho_i$.
Let $Z$ be the disjoint union of the $Z_i$ and let $\rho:(Z,e)\rightarrow (X,d)$ be the map induced by the $\rho_i$.
Then $(Z,e)$ is definably compact and $\rho$ is continuous, Lemma~\ref{lem:coveringdefcompacts} gives a definable set $Y$ and a definable homeomorphism $(X,d)\rightarrow (Y,e)$.
Our construction of the $Z_i$ is an application of the definable Michael's Selection Theorem, Theorem 4.1 of \cite{Aschenbrenner-Thamrongthanyalak}, which we now recall.
Let $T\subseteq E\times R^m$. 
We say that $T$ is \textbf{lower semi-continuous} if for every $(x,y)\in T$ and neighborhood $V\subseteq R^m$ of $y$ there is a neighborhood $U$ of $x$ such that $T_{y}\cap V\neq\emptyset$ for every $y \in U$.

\begin{thm}[Aschenbrenner-Thamrongtanyalak]
 Let $E$ be a definable locally closed set and let $T\subseteq E\times R^m$ be definable and lower semi-continuous such that each $T_x$ is closed and convex. 
 Then there is a definable continuous function $\sigma:E\rightarrow R^m$ such that $\sigma(x)\in T_x$ always.
\end{thm}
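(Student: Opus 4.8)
The plan is to reprove the definable Michael selection theorem by adapting the classical topological argument, but replacing its two non-algebraic ingredients by o-minimal substitutes. Classically one (i) builds, for each $\epsilon$, a continuous $\epsilon$-approximate selection using a locally finite partition of unity, and (ii) iterates these approximations to a uniformly Cauchy sequence whose limit is an exact selection. In the definable category the limit in (ii) is the easy part to repair: by o-minimality every definable path $R^{>}\to R^m$ has a limit as $t\to 0^{+}$, so definable completeness is never needed, and a uniformly Cauchy \emph{definable} family of approximate selections will converge to a definable continuous selection by Corollary~\ref{cor:defuniformlimits}, with continuity of the limit checked via Lemma~\ref{lem:continuity}. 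Throughout I assume $T_x\neq\emptyset$ for every $x$ (otherwise no selection exists), and I write $\operatorname{dist}(p,S)$ for the euclidean distance from a point $p$ to a set $S$. The backbone of the argument will be induction on $\dim E$.

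First I would set up the induction. If $\dim E=0$ then $E$ is finite, so definable choice gives a selection which is automatically continuous. For $\dim E=d>0$, definable choice produces some definable (not necessarily continuous) selection $\tau$ with $\tau(x)\in T_x$, and by o-minimal generic continuity $\tau$ is continuous on a definable open dense $E_0\subseteq E$ with $\dim(E\setminus E_0)<d$. Setting $B=E\setminus E_0$ (a definable locally closed set of dimension $<d$) and restricting $T$ to $B$, the inductive hypothesis yields a continuous definable selection $s_B\colon B\to R^m$. It then suffices to produce a continuous definable $\sigma\colon E\to R^m$ with $\sigma(x)\in T_x$ everywhere and $\sigma|_B=s_B$: I would first extend $s_B$ to a continuous definable target $\hat s$ on a neighborhood of $B$ in $E$ (definable Tietze extension), and the remaining task is to deform the interior selection $\tau$ so that it stays inside the convex sets $T_x$ while converging to $\hat s$ as $x\to B$.

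The convex geometry is what makes this deformation possible. Since each $T_x$ is convex, for $x\in E_0$ near $B$ both $\tau(x)\in T_x$ and any point of $T_x$ near the target $\hat s(x)$ may be joined by a segment lying in $T_x$; lower semi-continuity of $T$ at $b\in B$ guarantees that $\operatorname{dist}(\hat s(x),T_x)\to 0$ as $x\to b$, so a suitable point $p(x)\in T_x$ exists with $p(x)\to s_B(b)$. The continuous inner selection $p$ is built by the local approximate-selection scheme: on a closed bounded (hence definably compact) piece, where a definable open cover admits a finite subcover, I would choose for each base point $y\in T_{x_0}$ a neighborhood on which $T_x$ meets $B(y,\epsilon)$, pass to a finite subcover, form a definable continuous partition of unity $\phi_i(x)=\rho_i(x)/\sum_j\rho_j(x)$ with $\rho_i(x)=\operatorname{dist}(x,\,E\setminus U_i)$, and set $f(x)=\sum_i\phi_i(x)\,y_i$; convexity of $T_x$ keeps $f(x)$ within $\epsilon$ of $T_x$. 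Refining $T$ to $x\mapsto T_x\cap\cl(B(f(x),\epsilon))$ (again nonempty, closed, convex, and lower semi-continuous) and halving $\epsilon$, I would organize these refinements into a single definable family $\{f_t:t\in R^{>}\}$ with $\operatorname{dist}(f_t(x),T_x)\le t$ and $\|f_t-f_{t'}\|_\infty\le C\max(t,t')$, so that $t\mapsto f_t(x)$ is Cauchy and the family converges, as above, to a continuous definable selection.

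The hard part will be the interaction between non-compactness and lower semi-continuity. The partition-of-unity step does not transport globally: if the ``good radius'' $\delta(x)$ shrinks as $x$ escapes to infinity, then the cover $\{U_b\}$ admits no finite definable refinement and no finite definable partition of unity subordinate to it, and infinite definable sums are unavailable in the o-minimal setting. This is exactly why I route everything through the dimension induction, which localizes the genuine difficulty to a neighborhood of the lower-dimensional bad set $B$; but since $B$ itself may be non-compact, the coherent continuous-scale family $\{f_t\}$ must still be assembled there without a finite global cover, and the metric projection onto the only-lower-semi-continuous sets $T_x$ is itself discontinuous. Managing this—using convexity to replace discontinuous projections by continuously varying segments, and replacing $\mathbb{N}$-indexed iteration (unavailable inside $R$) by a genuinely definable family parametrized by the continuous scale $t$—is where the real work lies; once that family is in hand, the passage to the limit and the verification that $\sigma(x)\in T_x$ (using that $T_x$ is closed) are routine.
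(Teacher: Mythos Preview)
The paper does not prove this theorem. It is quoted as Theorem~4.1 of \cite{Aschenbrenner-Thamrongthanyalak} and used as a black box in the proof of Claim~\ref{clm:cvrclm}; no argument for it appears in the paper. So there is no ``paper's own proof'' to compare your proposal against.

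That said, since you have sketched an argument, a brief comment on its content is in order. Your outline follows the classical Michael scheme (approximate selections via partitions of unity, then iteration to a limit), and you correctly identify the two genuine obstacles in the definable setting: the absence of infinite definable partitions of unity, and the absence of $\mathbb{N}$-indexed recursion inside the structure. Your proposed remedies---dimension induction to localize near a lower-dimensional bad set, and a continuously parametrized family $\{f_t\}_{t\in R^{>}}$ in place of a sequence---are the right ideas, but the proposal does not actually carry them out: you explicitly say that assembling the coherent family $\{f_t\}$ ``is where the real work lies'' and leave it there. That is precisely the step that requires a new idea (in the original reference it is handled by a definable version of the Michael iteration using cell decomposition and a careful gluing, not by a limiting family in your sense), so as written this is a plan rather than a proof. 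If your goal is only to \emph{use} the selection theorem as the paper does, you may simply cite it; if your goal is to reprove it, the missing construction of the definable one-parameter family of approximate selections is a genuine gap.
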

We will use the following lemma to apply Michael Selection:

\begin{lem}
Let $E \subseteq R^l$ and let $S\subseteq E\times R^m$ be lower semi-continuous.
Let $T\subseteq E\times R^m$ be such that for every $x \in E$, $T_x$ is the closure of the convex hull of $S_x$ in $R^m$.
Then $T$ is lower semi-continuous.
\end{lem}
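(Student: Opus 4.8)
The plan is to verify lower semi-continuity of $T$ straight from the definition, using Carathéodory's theorem to reduce a membership statement about the closed convex hull $T_x = \cl(\operatorname{conv}(S_x))$ to finitely many membership statements about $S_x$, to which the lower semi-continuity of $S$ can be applied. Fix $(x,y) \in T$ and a neighborhood $V$ of $y$ in $R^m$; shrinking $V$, I may assume $V = \{ z \in R^m : \|z - y\| < \epsilon \}$ for some $\epsilon \in R^>$. Since $y \in T_x = \cl(\operatorname{conv}(S_x))$, the set $S_x$ is nonempty and there is a point $y' \in \operatorname{conv}(S_x)$ with $\|y - y'\| < \epsilon/2$. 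The first real step is to invoke Carathéodory's theorem to write $y'$ as a convex combination of at most $m+1$ points of $S_x$, say $y' = \sum_{i=0}^{m} \lambda_i s_i$ with each $s_i \in S_x$, each $\lambda_i \geqslant 0$, and $\sum_{i=0}^m \lambda_i = 1$.

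Next I would fix $\delta \in R^>$ with $\delta < \epsilon/2$ and apply the lower semi-continuity of $S$ at each of the finitely many points $(x, s_i)$: for each $i$ there is a neighborhood $U_i$ of $x$ such that $S_{x'}$ meets the open $\delta$-ball around $s_i$ for every $x' \in U_i$. Setting $U = \bigcap_{i=0}^{m} U_i$, which is again a neighborhood of $x$ because the intersection is finite, I would then argue as follows: given $x' \in U$, choose $s_i' \in S_{x'}$ with $\|s_i' - s_i\| < \delta$ for each $i$ and put $y'' = \sum_{i=0}^m \lambda_i s_i'$. Then $y'' \in \operatorname{conv}(S_{x'}) \subseteq T_{x'}$, and the triangle inequality gives $\|y'' - y\| \leqslant \sum_{i} \lambda_i \|s_i' - s_i\| + \|y' - y\| < \delta + \epsilon/2 < \epsilon$, so $y'' \in T_{x'} \cap V$. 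This is exactly what is required for lower semi-continuity of $T$ at $(x,y)$, and since $(x,y)$ and $V$ were arbitrary the lemma follows.

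I do not expect a serious obstacle here; the argument is the standard one showing that passing to closed convex hulls fiberwise preserves lower semi-continuity. The single point that must not be glossed over is the appeal to Carathéodory's theorem: it is what bounds the number of points $s_i$, hence the number of neighborhoods $U_i$ to intersect, by $m+1$, so that $U$ is genuinely a neighborhood of $x$. I would also note that definability plays no role in this lemma — lower semi-continuity is a purely topological condition on the fibers of $S$ and $T$ — so nothing from the o-minimal setting is needed beyond what is already in place.
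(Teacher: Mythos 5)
Your proof is correct and follows essentially the same strategy as the paper's: approximate $y$ by a point of the convex hull of $S_x$, express that point as a convex combination of finitely many elements of $S_x$, apply lower semi-continuity of $S$ at each of them, and intersect the resulting (finitely many) neighborhoods of $x$. The one substantive difference is that the paper writes the approximating point as $sp+(1-s)p'$ for just \emph{two} points $p,p'\in S_x$, which as stated is a gap for $m\geqslant 2$ (a neighborhood of, say, the centroid of a triangle with vertex set $S_x$ meets the convex hull but no segment joining two points of $S_x$); your appeal to Carath\'eodory's theorem, giving $m+1$ points and hence still only finitely many neighborhoods to intersect, is exactly the right repair, and Carath\'eodory is valid over any ordered field, so nothing is lost in the o-minimal setting.
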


\begin{proof}
Fix $x\in E$ and let $V$ be an open subset of $R^m$ such that $T_x \cap V\neq\emptyset$.
The intersection of $V$ with the convex hull of $S_x$ is nonempty, hence there are
$p,p'\in S_x$  and $s\in(0,1)$ such that $sp+(1-s)p'\in V$.
There are open $W,W'\subseteq R^m$ such that $p\in W$, $p'\in W'$ and if $(q,q')\in W\times W'$
then $sq+(1-s)q'\in V$. 
Let $U$ be a neighborhood of $x$ such that if $y \in U$ then $S_{y}\cap W$ and $S_{y}\cap W'$ are both nonempty.
Fix $q\in S_{y}\cap W$ and $q'\in S_{y}\cap W'$.
Then $sq+(1-s)q'$ is an element of $T_{y}\cap V$. 
So $T$ is lower semi-continuous.
\end{proof}
We now prove Claim~\ref{clm:cvrclm}.
\begin{proof}
As it suffices to prove the claim for any definable metric space definably isometric to $(X,d)$ we assume that $X$ is a bounded subset of euclidean space.
We apply induction on $\dim(X)$.
The base case $\dim(X)=0$ is trivial, we treat the case $\dim(X)>0$.
Applying Proposition~\ref{cor:continuityidentity} we partition $X$ into cells $X_1,\ldots,X_n$ such that each $\id:(X_i,d)\rightarrow (X_i,e)$ is a homeomorphism. 
For each $i$ we construct a definably compact definable set $Z_i$ and a definable continuous surjection 
$$\rho_i: (Z_i,e) \rightarrow (\cl_d(X_i),d). $$
Fix $i$.
To simplify notation we let $C=X_i$ and $A=\partial_d(C)$. 
As $(C,e)$ is locally definably compact so $(C,d)$ is locally definably compact.
By Lemma~\ref{lem:closureincomplete} this implies that $C$ is a locally closed subset of $X$.
Then $C$ is open in $\cl_d(C)$, so $A$ is a $d$-closed subset of $X$.
Thus $(A,d)$ is definably compact.
By Lemma~\ref{lem:dimineq} we have $\dim(A)<\dim(X)$, so we apply the inductive assumption to $(A,d)$.
We let $A'\subseteq R^m$ be a definably compact subset of euclidean space and let $\tau:(A,d)\rightarrow (A',e)$ be a homeomorphism.
We let $Y$ be the disjoint union of $X_i$ and $A'$.
We define $\tau':\cl_d(X_i)\rightarrow Y$ to be identity on $X_i$ and $\tau$ on $A$.
We let $d'$ be the pushforward of $d$ by $\tau$.
Then $(Y,d')$ is definably isometric to $(\cl_d(X_i),d)$, so it suffices to construct a definable continuous surjection $\rho:(Z,e)\rightarrow (Y,d')$ from a definably compact $Z$.
We assume without loss of generality that the $d$- and $e$-topologies agree on $A$.
Then $A$ is a definably compact subset of euclidean space. 
We use the definable Michael's Selection Theorem to construct a bounded continuous definable function $\sigma:C \rightarrow R^m$
 which satisfies the following for any path $\gamma$ in $C$: if $\gamma$ $d$-converges to $x\in A$ then $\lim_{t\rightarrow 0^+}(\sigma\circ \gamma)(t)=x$.

We first suppose we have such a $\sigma$.
Let $Z$ be the closure of $\gr(\sigma)$ in the ambient euclidean space.
Boundedness of $C$ and $\sigma$ ensures that $Z$ is definably compact.
Let $\pi_1:Z\rightarrow C$ and $\pi_2:Z\rightarrow R^m$ be the coordinate projections.
  We define a map $\rho:Z\rightarrow \cl_d(C)$ by setting $\rho(x)=\pi_1(x)$ when $x\in \gr(\sigma)$ and $\rho(x)=\pi_2(x)$ when $x\in\partial_e[\gr(\sigma)]$. 
First we must show that $\rho$ does in fact take values in $\cl_d(C)$.
It suffices to show that $\pi_2(x)\in A$ when $x\in\partial_e[\gr(\sigma)]$, to this effect fix such an $x$.
Let $\gamma$ be a path in $\gr(\sigma)$ which $e$-converges to $x$.
Then $\pi_1\circ \gamma$ is a path in $C$ which by definable compactness must $d$-converge to some $y\in \cl_d(C)$. 
This $y$ must be an element of $A$, otherwise $\pi_1\circ \gamma$ would $e$-converge to an element of $C$ and the continuity of $\sigma$ would force $x\in\gr(\sigma)$.
We have
$$\rho(x)=\pi_2(x)=\lim_{t\rightarrow 0^+}(\pi_2\circ \gamma)(t)=\lim_{t\rightarrow 0^+}(\sigma\circ\pi_1\circ \gamma)(t)=y\in\cl_d(C).$$
We have shown that $\rho$ takes values in $\cl_d(C)$. 

We now show that $\rho$ is surjective.
As $C$ is obviously contained in the image of $\rho$ it suffices to show that $A$ is contained in the image of $\rho$.
Let $y\in A$. 
Applying definable choice let $\gamma$ be a path in $C$ which $d$-converges to $y$. 
We have $\lim_{t\rightarrow 0^+}(\sigma\circ \gamma)(t)=y$.
Letting $z$ be the $e$-limit of $(f(t),(\sigma \circ \gamma)(t))$ as $t\rightarrow 0^+$ we have $z\in Z$ and $\rho(z)=y$.

We now show that $\rho$ is continuous.
We first show that $\rho$ is continuous at every point in $\gr(\sigma)$.
The restriction of $\rho$ to $\gr(\sigma)$ is a continuous function, as projections are continuous and $\id:(C,e)\rightarrow (C,d)$ is continuous. 
As $\gr(\sigma)$ is a cell it hence open in its closure, $Z$. 
Thus $\rho$ is continuous at every point in $\gr(\sigma)$.
We now show that $\rho$ is continuous at every point in $\partial_e[\gr(\sigma)]$. 
Fix $x\in\partial_e[\gr(\sigma)]$.
Let $\gamma$ be a path in $Z$ which $e$-converges to $x$.
 First suppose that $\gamma(t)\in[\gr(\sigma)]$ when $t$ is sufficiently small. 
 Then the $d$-limit of
$(\rho\circ \gamma)(t)$ as $t\rightarrow 0^+$ equals the $d$-limit of $(\pi_1\circ \gamma)(t)$ as $t\rightarrow 0^+$, which equals
$\lim_{t\rightarrow 0^+}(\sigma\circ \pi_1\circ \gamma)(t)$. 
As we showed above this equals $\rho(x)$.
We now assume $\gamma(t)\in\partial_e[\gr(\sigma)]$ when $0<t\ll 1$.
As $\pi_2:(\partial_e[\gr(\sigma)],e)\rightarrow (A,e)$ and $\id:(A,e)\rightarrow (A,d)$ are continuous, the restriction of $\rho$ to $\partial_e[\gr(\sigma)]$ is continuous.
This implies that $\gamma(t) \rightarrow x$ as $t \rightarrow 0^+$.

We finally construct $\sigma$.
Let $B\subseteq C\times R^m$ be the set of $(x,y)$ such that $y\in A$ and $d(y,x)<2d(A,x)$. 
As $A$ is $d$-closed, $d(A,x)$ is always strictly positive, so $B_x$ is always nonempty.
We show that $B$ is lower semi-continuous.
Fix $z,y\in C$ satisfying $y \in B_z$.
 By the definition of $B$, $d(y,z)<2d(A,z)$.
We show that if $z'\in C$ satisfies
 $$d(z,z')<\frac{1}{4} | 2d(A,z) - d(y,z) |$$
then $y\in B_{z'}$. 
As the set of such $z'$ is an $e$-open subset of $C$ this gives lower semi-continuity.
We have
$$ | d(y,z') - d(y,z) | \leqslant d(z,z') <\frac{1}{4} | 2d(A,z) - d(y,z) | $$
 and 
$$|2d(A,z)-2d(A,z')|\leqslant 2d(z,z') < \frac{1}{2} | 2d(A,z) -d(y,z) |. $$
These two inequalities give $d(y,z') < 2d(A,z')$ so $y\in B_{z'}$.

 Let $D\subseteq X\times R^m$ be the definable set such that for each $x\in C$, $D_x$ is the closure of the convex hull of $B_x$. 
Each $D_x$ is closed and convex and $D$ is lower semi-continuous.
Applying the Michael's Selection Theorem let $\sigma:C\rightarrow R^m$ be a continuous definable map
 such that $\sigma(x)\in D_x$ holds for all $x \in C$.
Fix a path $\gamma$ in $C$ which $d$-converges to $x\in A$. 
We show that $\lim_{t\rightarrow 0^+}(\sigma\circ \gamma)(t)=x$.
Towards this we show that $B_{\gamma(t)}$ is contained in the $d$-ball with center $x$ and radius $3d(x,\gamma(t))$.
Fix $w \in B_{\gamma(t)}$.
By definition of $B$ we have $d(w,\gamma(t))<2d(A,\gamma(t))$. 
As $x\in A$ we have
$d(A,\gamma(t))\leqslant d(x,\gamma(t))$
and we compute:
$$d(x,w)\leqslant d(\gamma(t),x)+d(\gamma(t),w) \leqslant d(\gamma(t),x) + 2d(A,\gamma(t)) \leqslant 3d(\gamma(t),x).  $$
As $(A,d)$ is definably compact and the $d$-topology agrees with the $e$-topology on $A$, we apply Lemma~\ref{prp:compactuniformcontinuity} to produce a definable $h:R^{\geqslant}\rightarrow R^{\geqslant}$ satisfying 
$$\| y - y'\| \leqslant h(d(y,y')) \quad \text{for all } y,y'\in A$$
 and such that $\lim_{t\rightarrow 0^+}h(t) =0$.
We let $r(t)=h(3d(\gamma(t),x))$.
Then $r(t)\rightarrow 0$ as $t\rightarrow 0^+$ and  $B_{\gamma(t)} \subseteq B_e(x,r(t))$ for all $t \in R^>$.
As euclidean balls are convex, 
$$C_{\gamma(t)} \subseteq B_e(x, r(t)) \quad \text{for all } t \in R^>.$$
Thus $\| (\sigma\circ \gamma)(t) - x \| \leqslant r(t)$. 
The path $\sigma \circ \gamma$ $e$-converges to $x$.
\end{proof}
This completes the proof of Theorem~\ref{main}.
The uniform version of Theorem~\ref{main} given below follows by applying model-theoretic compactness and the existence of definable Skolem functions in the usual way:

\begin{cor}\label{cor:defsep}
Let $\{(X_\alpha,d_\alpha): \alpha\in R^l\}$ be a definable family of metric spaces.
The set of $\alpha \in R^l$ such that $(X_\alpha,d_\alpha)$ is definably separable is definable.
In fact, there is a definable family $\{ A_\alpha : \alpha \in R^l\}$ of sets such that $A_\alpha \subseteq X_\alpha$, a definable family of sets $\{ Z_\alpha : \alpha \in R^l\}$ and a definable family of functions $h_\alpha : X_\alpha \rightarrow Z_\alpha$ such that for every $\alpha \in R^l$ exactly one of the following holds:
\begin{enumerate}
\item $A_\alpha$ is a $d$-discrete subset of $X_\alpha$,
\item $h_\alpha$ gives a homeomorphism $(X_\alpha, d_\alpha) \rightarrow (Z_\alpha, e)$.
\end{enumerate}
The set of $\alpha$ for which $h_\alpha$ gives such a homeomorphism is definable.
\end{cor}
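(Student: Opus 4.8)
\textit{Proposal.} The plan is to derive the corollary from Theorem~\ref{main} by a standard model-theoretic compactness argument together with definable choice, using the fact — itself part of Theorem~\ref{main} — that the two alternatives are mutually exclusive: a definable metric space possessing an infinite definable discrete subset is not definably homeomorphic to any definable set carrying its euclidean topology. Say the family $\{(X_\alpha,d_\alpha):\alpha\in R^l\}$ is defined over parameters $\bar c$, and let $B\subseteq R^l$ be the set of $\alpha$ for which $(X_\alpha,d_\alpha)$ is not definably separable. Fix enumerations $\phi_0,\phi_1,\dots$ of formulas $\phi_i(x,\bar s)$, with $x$ ranging over the ambient euclidean space of the $X_\alpha$, and $\psi_0,\psi_1,\dots$ of formulas $\psi_j(x,y,\bar s)$ (with $y$ of varying arity). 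For each $i$ let $\Theta_i(\alpha)$ be the formula with parameters $\bar c$ asserting that for some $\bar s$ the set $\{x:\phi_i(x,\bar s)\}$ is an infinite $d_\alpha$-discrete subset of $X_\alpha$; this is genuinely first order, since $d_\alpha$-discreteness unwinds to a first-order condition and, by uniform finiteness in o-minimal structures, ``$\{x:\phi_i(x,\bar s)\}$ is infinite'' is first order in $(\alpha,\bar s)$. For each $j$ let $\Xi_j(\alpha)$ assert that for some $\bar s$ the formula $\psi_j(\cdot,\cdot,\bar s)$ defines an injective map $h$ on $X_\alpha$ which is continuous as a map $(X_\alpha,d_\alpha)\to(h(X_\alpha),e)$ and has continuous inverse; this too is first order. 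Replacing each $\Theta_i$ by $\bigvee_{i'\le i}\Theta_{i'}$, and similarly each $\Xi_j$, we may assume both sequences are increasing. By construction $\Theta_i(\cR)\subseteq B$, and using the exclusivity of the two alternatives together with Theorem~\ref{main}, $\Xi_j(\cR)\subseteq R^l\setminus B$.

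Now suppose toward a contradiction that $R^l\neq\Theta_I(\cR)\cup\Xi_J(\cR)$ for every $I,J$. Since the two sequences are increasing, the partial type $\{\neg\Theta_i(\alpha)\}_i\cup\{\neg\Xi_j(\alpha)\}_j$ is finitely satisfiable in $\cR$, so by compactness it is realized by some $\alpha'$ in an elementary extension $\cR'\succ\cR$. As $\cR'$ is again an o-minimal expansion of a field, Theorem~\ref{main} applies to the $\cR'$-definable metric space $(X_{\alpha'},d_{\alpha'})$: either it has an infinite definable discrete subset, which is defined by some $\phi_i$ with some parameters and so forces $\cR'\models\Theta_i(\alpha')$, or it is definably homeomorphic to a definable set with its euclidean topology, which forces $\cR'\models\Xi_j(\alpha')$ for some $j$. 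Either way we contradict the choice of $\alpha'$. Hence $R^l=\Theta_I(\cR)\cup\Xi_J(\cR)$ for some $I,J$; since $\Theta_I(\cR)\subseteq B$ and $\Xi_J(\cR)\subseteq R^l\setminus B$ are disjoint, this is a partition and $B=\Theta_I(\cR)$. In particular $B$ is definable.

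Finally, write $\Theta_I(\alpha)=\bigvee_{i\le I}\exists\bar s\,\chi_i(\alpha,\bar s)$ with $\chi_i$ the first-order unwinding above. The set $\{(\alpha,i,\bar s):i\le I,\ \chi_i(\alpha,\bar s)\}$ is definable and projects onto $\Theta_I(\cR)$, so definable choice gives a definable map $\alpha\mapsto(i(\alpha),\bar s(\alpha))$ on $\Theta_I(\cR)$; put $A_\alpha=\{x:\phi_{i(\alpha)}(x,\bar s(\alpha))\}$ for $\alpha\in\Theta_I(\cR)$ (a finite case split over $i\le I$) and $A_\alpha=\emptyset$ otherwise. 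Likewise definable choice applied to $\Xi_J$ yields a definable family $\{h_\alpha\}$ of maps; set $Z_\alpha=h_\alpha(X_\alpha)$ for $\alpha\in\Xi_J(\cR)$ and let $h_\alpha,Z_\alpha$ be arbitrary (say constant) otherwise. For $\alpha\in\Theta_I(\cR)$, alternative (1) holds by construction and alternative (2) fails because, by Theorem~\ref{main}, $(X_\alpha,d_\alpha)$ is then not definably homeomorphic to any definable set with its euclidean topology; for $\alpha\in\Xi_J(\cR)$, alternative (2) holds by construction and (1) fails since $A_\alpha=\emptyset$. Thus exactly one alternative holds for each $\alpha$, and $\{\alpha:h_\alpha\text{ is a homeomorphism }(X_\alpha,d_\alpha)\to(Z_\alpha,e)\}=\Xi_J(\cR)$ is definable. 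The substantive input is Theorem~\ref{main}; within this proof the point deserving care is the compactness bookkeeping — in particular verifying that ``infinite'', ``$d_\alpha$-discrete'' and ``homeomorphism onto its euclidean image'' are expressible by first-order formulas uniformly across the family (immediate from o-minimal uniform finiteness) and that o-minimality is inherited by $\cR'$.
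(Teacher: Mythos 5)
Your proposal is correct and is exactly the route the paper takes: the paper offers no detailed proof, saying only that the corollary "follows by applying model-theoretic compactness and the existence of definable Skolem functions in the usual way," and your argument is a careful unwinding of precisely that — enumerating candidate formulas for discrete sets and for homeomorphisms, using compactness against an elementary extension (where Theorem~\ref{main} still applies since o-minimality passes to elementary extensions) to get a finite covering, and then definable choice to extract the uniform families $A_\alpha$, $h_\alpha$, $Z_\alpha$. The only point worth a remark is that the "increasing enumeration" bookkeeping should be phrased via closure under finite disjunctions if the language is uncountable, but this does not affect the argument.
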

Recall our standing assumption that $(X,d)$ is definably compact.
Combining Corollary~\ref{cor:contintocompact} and Theorem~\ref{main} we have:
\begin{cor}\label{cor:contodef}
Every definable metric space $(Y,d)$ is definably isometric to a definable metric space $(Z,d')$ such that $\id:(Z,d')\rightarrow (Z,e)$ is continuous.
\end{cor}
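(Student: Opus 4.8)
The plan is to read this off directly from Corollary~\ref{cor:contintocompact} together with the dichotomy in Theorem~\ref{main}. First I would apply Corollary~\ref{cor:contintocompact} to $(Y,d)$, obtaining a definable continuous injection $\iota\colon (Y,d)\to (W,d_W)$ into a definably compact metric space $(W,d_W)$. By Lemma~\ref{lem:compactissep}, every definable subset of a definably compact metric space is definably separable; in particular $(W,d_W)$ itself is definably separable, so alternative (1) of Theorem~\ref{main} fails for $(W,d_W)$. Hence alternative (2) holds: there is a definable set $Z_0$ and a definable homeomorphism $h\colon (W,d_W)\to (Z_0,e)$.

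Next I would form the composite $\phi=h\circ\iota\colon (Y,d)\to (Z_0,e)$. It is definable, continuous (a composite of continuous maps), and injective (a composite of injections, since $h$ is a homeomorphism). Put $Z=\phi(Y)$, a definable subset of $Z_0$ carrying the euclidean topology, and let $d'$ be the pushforward of $d$ by $\phi$, that is $d'(\phi(x),\phi(y))=d(x,y)$; injectivity of $\phi$ makes this a well-defined definable metric on $Z$, and by construction $\phi\colon (Y,d)\to (Z,d')$ is a definable isometry. So $(Y,d)$ is definably isometric to $(Z,d')$, and it remains only to check the continuity claim. Since $\phi$ is an isometry, $\phi^{-1}\colon (Z,d')\to (Y,d)$ is continuous, and $\phi\colon (Y,d)\to (Z,e)$ is continuous, so $\id=\phi\circ\phi^{-1}\colon (Z,d')\to (Z,e)$ is continuous, as desired.

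The hard part is, frankly, not here: everything substantive has already been done in Corollary~\ref{cor:contintocompact} (which rests on Proposition~\ref{prp:keytechnical} and the Hausdorff-graph embedding $\iota_X$) and in Theorem~\ref{main}. The only point that needs a moment's attention is ensuring that the space into which we inject lands on the favourable side of the dichotomy, which is exactly what Lemma~\ref{lem:compactissep} provides; the rest is routine bookkeeping with pushforward metrics and composites of continuous maps.
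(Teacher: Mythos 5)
Your proof is correct and follows essentially the same route as the paper's: both rest on the embedding into a definably compact space from Corollary~\ref{cor:contintocompact}, the dichotomy of Theorem~\ref{main} (with Lemma~\ref{lem:compactissep} guaranteeing the separable side), and a pushforward of the metric along the resulting continuous injection. The only cosmetic difference is that the paper applies Theorem~\ref{main} directly to $(Y,d_H)$ while you apply it to the ambient definably compact space and then restrict to the image; the factorization of $\id$ as a composite of continuous maps at the end is identical.
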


\begin{proof}
After replacing $(Y,d)$ with a definably isometric space if necessary we can suppose that $d_{H}$ is a metric on $Y$ and that $\id:(Y,d)\rightarrow (Y,d_{H})$ is continuous.
There is a definable set $Z$ and a definable homeomorphism
$$ \tau: (Y,d_{H}) \rightarrow (Z,e). $$
Let $d'$ be the pushforward of $d$ onto $Z$ by $\tau$.
Then $(Z,d')$ is definably isometric to $(Y,d)$.
We show that $\id: (Z,d') \rightarrow (Z,e)$ is continuous by factoring it as a composition of continuous maps:
$$ (Z,d') \overset{\tau^{-1}}{\longrightarrow}   (Y,d) \overset{\id}{\longrightarrow} (Y,d_{H}) \overset{\tau}{\longrightarrow} (Z,e).$$
\end{proof}

\section{Product Structure of General Definable Metric Spaces}
In this section we prove Proposition~\ref{prp:productstructure}.
As an application in the next section we prove Corollary~\ref{cor:lasttopdim}, which characterizes the topological dimension of a metric space definable in an o-minimal expansion of the real field.

\begin{prp}\label{prp:productstructure}
Almost every $x\in X$ is contained in a definable $e$- and $d$-open set $U\subseteq X$ which admits a definable homeomorphism
$$ (U,d) \rightarrow (I^n,e) \times (D,d_{\di}) $$
where $I$ is an open interval, $n \leqslant \dim(X)$, and $D$ is a definable set.
If $n = \dim(X)$ then we can take $D$ to be a singleton.
\end{prp}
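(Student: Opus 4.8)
The plan is to normalise the metric, dispose of the case where the local $d$-dimension is maximal, and otherwise realise $U$ as a union of $d$-clopen ``slices'' cut out by a threshold for $d$; the integer $n$ of the statement will be the local $d$-dimension $\dim_x(X,d)$.

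\emph{Normalisation.} By Corollary~\ref{cor:continuityidentity}(ii) we may partition $X$ into cells on each of which $\id$ is $d$-to-$e$-continuous. A top-dimensional cell of this partition is relatively $e$-open in $X$ and such cells cover $X$ outside a lower-dimensional set, so it suffices to prove the statement inside one of them; shrinking it further (the map $x\mapsto\dim_x(X,d)$ being definable) we may assume that $X$ is a cell, that $\id\colon(X,d)\to(X,e)$ is continuous, and that $\dim_x(X,d)=n$ for all $x\in X$. Put $l=\dim X$; since $B_d(x,t)\subseteq X$ we have $n\le l$. If $n=l$, then $\dim_x(X,d)=\dim X$ everywhere, so by Corollary~\ref{cor:continuityidentity}(iv) almost every $x_0\in X$ has an $e$- and $d$-open neighbourhood $V$ on which $\id\colon(V,d)\to(V,e)$ is a homeomorphism; shrinking $V$ to a cell, $(V,d)$ is definably homeomorphic to $(I^n,e)$, which is of the form $(I^n,e)\times(D,d_{\di})$ with $D$ a singleton. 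So assume $n<l$.

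\emph{From a threshold to a product neighbourhood.} Fix $x_0$ and suppose that for some $\rho,\sigma\in R^{>}$, writing $W=B_e(x_0,\rho)\cap X$, we have: (a) no two points of $W$ lie at $d$-distance in $[\sigma,2\sigma]$; and (b) for each $x\in W$ the set $C(x):=\{y\in W:d(x,y)<\sigma\}$ is definably homeomorphic to $(I^n,e)$ with the $d$- and $e$-topologies agreeing on it, and the definable family $\{C(x):x\in W\}$ is definably trivial with fibre $(I^n,e)$. By (a) and the triangle inequality the relation $x\sim y\iff d(x,y)<\sigma$ is transitive, hence an equivalence relation on $W$ whose classes are exactly the $C(x)$. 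Continuity of $\id\colon(X,d)\to(X,e)$ makes $W$ both $e$- and $d$-open in $X$; each $C(x)$ is $d$-open in $W$, and being the complement in $W$ of a union of other classes it is $d$-clopen in $W$. Hence in the $d$-topology $W$ is the disjoint union of the $C(x)$. Realising $D:=W/\!\sim$ as a definable set by elimination of imaginaries, the trivialisation in (b) becomes a definable homeomorphism $(W,d)\to(I^n,e)\times(D,d_{\di})$; as definable bijections preserve dimension, $\dim D=l-n$. Then $U:=W$ does the job for this $x_0$.

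\emph{Producing the threshold --- the main obstacle.} Using definable choice, fix a definable $g\colon X\to R^{>}$ with $\dim B_d(x,g(x))=n$ for every $x$; put $Y_x=B_d(x,g(x))$. By the triangle inequality every $p\in Y_x$ has a $d$-ball inside $Y_x$, so $\dim_p(Y_x,d)=\dim_p(X,d)=n=\dim Y_x$: the space $(Y_x,d)$ has full local $d$-dimension at all of its points, and Corollary~\ref{cor:continuityidentity}(iv), applied inside it, shows that almost every $p\in Y_x$ has a $d$-open neighbourhood in $Y_x$ which is an $n$-cell carrying the $e$-topology. A dimension count after the manner of Lemmas~\ref{lem:firstsep} and~\ref{lem:keyy}, applied to the definable set of pairs $(x,p)$ with $p$ of this good kind in $Y_x$, then shows that almost every $x_0\in X$ is good for its own ball $Y_{x_0}$; for such $x_0$ pick, definably in $x_0$, a $d$-open $S_{x_0}\ni x_0$ inside $Y_{x_0}$ carrying the $e$-topology and definably homeomorphic to an open cube (take a small open box of the cell supplied by goodness; it stays $d$-open in $X$ because the $d$- and $e$-topologies agree on it). Since $S_{x_0}$ is a $d$-neighbourhood of $x_0$, $\delta(x_0):=d(x_0,X\setminus S_{x_0})>0$, and by Lemma~\ref{lem:c} there are an $e$-open $V$ inside the good set and a $c>0$ with $\delta(x)>c$ for all $x\in V$. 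Fix $x_0\in V$. Because the $d$- and $e$-topologies agree on each $S_x$, the $d$-diameter of $S_x\cap W$ tends to $0$ with $\rho$ (uniformly in $x\in W$, by o-minimal uniformity); choose $\rho$ with $W:=B_e(x_0,\rho)\cap X\subseteq V$ and with this $d$-diameter below some $a<c/2$, and take $\sigma\in(a,c/2)$. Then any pair $y,y'\in W$ with $y'\in S_y$ has $d(y,y')<a<\sigma$, while $y'\notin S_y$ forces $d(y,y')\ge\delta(y)>c>2\sigma$, so (a) holds; moreover $C(x)\subseteq B_d(x,\sigma)\subseteq B_d(x,\delta(x))\subseteq S_x$ and in fact $C(x)=S_x\cap W$, an $e$-open piece of the cube $S_x$. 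A last shrinking of $\rho$ together with the Trivialization Theorem applied to the definable family $\{C(x):x\in W\}$ then gives (b). The genuinely hard content is exactly this paragraph: the dimension count yielding ``good for one's own $d$-ball'' almost everywhere, together with the bookkeeping needed to make the inter-slice separation, the cube structure of the slices, and the triviality of the slice family all hold uniformly near $x_0$; granting these, the first two paragraphs are formal.
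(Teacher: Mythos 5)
Your overall architecture (normalise so that $\id:(X,d)\to(X,e)$ is continuous and $\dim_x(X,d)=n$ is constant, dispose of $n=\dim X$ via Corollary~\ref{cor:continuityidentity}(iv), then exhibit a neighbourhood that splits into $d$-clopen euclidean slices) matches the paper's, but your mechanism for producing the slices --- a distance threshold $\sigma$ making $d(x,y)<\sigma$ a definable equivalence relation --- is genuinely different from the paper's, and the step you yourself flag as the main obstacle contains a real gap. You want ``almost every $x_0$ is good for its own ball $Y_{x_0}$,'' and you propose to get it by a fiber-dimension count on the set of pairs $(x,p)$ with $p$ good in $Y_x$. But Corollary~\ref{cor:continuityidentity}(iv) only gives $\dim\{p\in Y_x: p\text{ bad}\}<n$ for each fixed $x$, so the fiber lemma bounds the dimension of the bad pair set by $\dim(X)+n-1$; this says nothing about its intersection with the diagonal $\{(x,x)\}$, which could a priori be all of $X$. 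To rule that out you would need to know that the bad locus, viewed inside $X$, is lower-dimensional, and that requires knowing the balls $Y_x$ organise into a fibration over an $(\dim X-n)$-dimensional base --- which is exactly the structure you are trying to build, so the argument is circular at its core. The paper breaks this circle with the good directions lemma: it produces a linear projection $\pi:W\to R^n$ finite (hence $d$-discrete) on each ball $B_x$, fixes a single $(\dim X-n)$-dimensional fiber $F$ of $\pi$, and takes $U$ to be a union of balls centered on $F$; Lemma~\ref{lem:openinsep} is then applied uniformly only over the lower-dimensional parameter set $F$, and the discrete factor $D$ comes from $F$ rather than from a quotient by a definable equivalence relation.

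A second, smaller gap is the claim that $\operatorname{diam}_d(S_x\cap W)\to 0$ as $\rho\to 0$ uniformly in $x\in W$ ``by o-minimal uniformity.'' When $n<\dim X$ the map $\id:(X,e)\to(X,d)$ is continuous at no point of $W$ (a small $d$-ball is $n$-dimensional, so it contains no $e$-ball), so euclidean smallness of $S_x\cap W$ gives nothing for free; you need a genuine generic-uniformity argument (Lemma~\ref{lem:c} applied to the modulus of continuity of $\id:(S_x,e)\to(S_x,d)$ at $x$, as a definable function of $x$), which is plausible after further shrinking but is not supplied. By contrast the paper never needs a uniform inter-slice gap at all: it only needs each slice to be $d$-open, the slices to be pairwise disjoint (arranged by shrinking the balls' radii below half the discreteness constant $t$ of the fibers of $\pi$), and each slice to carry the euclidean topology; the clopen decomposition and the homeomorphism onto $(I^n,e)\times(D,d_{\di})$ then follow without any threshold $\sigma$ or appeal to the Trivialization Theorem. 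If you want to salvage your route, the cleanest fix is to import the paper's projection $\pi$ precisely to certify that almost every $x$ has a $d$-open euclidean $n$-cell neighbourhood --- at which point you have essentially reproduced the paper's proof and the equivalence-relation packaging becomes optional.
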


\begin{proof}
Let $\dim(X) = m$.
It suffices to prove the proposition for any definable metric space definably isometric to $(X,d)$ as any definable isometry $(X,d) \rightarrow (X',d')$ locally gives a homeomorphism $(X,e) \rightarrow (X',e)$ at almost every point.
Therefore after applying Corollary~\ref{cor:contodef} we assume that $\id:(X,d)\rightarrow (X,e)$ is continuous.
Then every $e$-open subset of $X$ is $d$-open.
It suffices to fix an $e$-open, $m$-dimensional definable $W\subseteq X$ and find an $e$-open definable $U\subseteq W$ which satisfies the conditions of the proposition. 
We suppose, after shrinking $W$ if necessary, that $\dim_x(X,d)=n$ for every $x\in W$. 
Then $\dim_x(W,d)=n$ at every $x\in W$.
If $n=m$ we use Lemma~\ref{lem:openinsep} to find an $e$-open definable $U\subseteq W$ for which $\id:(U,d) \rightarrow (U,e)$ is a homeomorphism and the proposition holds with $D$ a singleton.
We therefore assume that $n<m$.
Let $f: W \rightarrow R^>$ be a definable function such that $\dim B(p, f(p)) = n$ for all $p \in W$.
For $x \in W$ let $B_x = B(x,f(x))$.
Applying a version of the good directions lemma in the same way as in the proof Lemma~\ref{lem:firstsep} we let $\pi:W \rightarrow R^n$ be a linear projection and $W' \subseteq W$ be an $e$-open $m$-dimensional definable set such that
$$ | \pi^{-1}(w) \cap B_x| < \infty \quad \text{ for all } w \in \R^n,x \in W. $$
After replacing $W$ with $W'$ if necessary we suppose that these properties hold for $W$.
As in the proof of Lemma~\ref{lem:firstsep} each fiber of $\pi$ is $d$-discrete.
For all $x \in W$ there is a $t \in R^>$ such that $d(x,y) > t$ whenever $y \in W$ is distinct from $x$ and satisfies $\pi(x) = \pi(y)$.
After applying Lemma~\ref{lem:c} and replacing $W$ with a smaller $m$-dimensional open set if necessary we fix a $t>0$ such that if $x,y\in W$ and $\pi(x) = \pi(y)$ then $d(x,y) > t$.
After replacing $f$ with the function $\max\{ f, \frac{1}{2}{t}\}$ if necessary we suppose that each $B_x$ has radius at most $\frac{1}{2}t$.
Note that we still have $\dim(B_x) = n$ and $\dim_y(B_x)=n$ at all $y\in B_x$.
As $\diam_d(B_x)<t$ the restriction of $\pi$ to each $B_x$ is injective.
Furthermore if $B_x \cap B_{y} \neq \emptyset$ then the triangle inquality implies $d(x,y) < t$.
Thus if $\pi(x) = \pi(y)$ then $B_x$ and $B_{y}$ have empty intersection.
Injectivity of $\pi$ on $B_x$ implies $\dim \pi(B_x) = n$.
As $\dim \pi(W) = n$ the fiber lemma for o-minimal dimension gives that some fiber of $\pi$ is $(m-n)$-dimensional.
We fix an $(m-n)$-dimensional fiber $F\subseteq W$ and let $G=\bigcup_{x\in F} B_x$.
As the union is disjoint $\dim(G)=m$.
As $\dim_y(B_x,d) = n$ at every $y\in B_x$ we apply Lemma~\ref{lem:openinsep} uniformly we take $\beta: F \times (0,1)^n \rightarrow G$ to be a definable function such that for each $x\in F$, $\beta_x[(0,1)^n]$ is an $e$-open subset of $B_x$ and 
$$\beta_x: ((0,1)^n,e)\rightarrow (\beta_x(I^n),d)$$
 is a homeomorphism for each $x \in W$.
We fix a definable $D\subseteq F$ with $\dim(D)=m-n$ and an interval $I\subseteq (0,1)$ such that the restriction of $\beta$ to $D \times I^n$ gives a continuous map between euclidean topologies.
We now replace $\beta$ with the restriction of $\beta$ to $D \times I^n$.
By the o-minimal open mapping theorem, see \cite{woe} or \cite{jj}, $\beta(D \times I^n)$ is an $e$-open subset of $W$.
We let $U=\beta(D \times I^n)$. 
$U$ is a $d$-open subset of $X$.
Then $(U,d)$ is, as a topological space, the disjoint union of the sets $\beta(\{b\}\times I^n)$.
It follows that
$$ \beta: (D,d_{\di}) \times (I^n,e) \rightarrow (U,d) $$
is a homeomorphism.
\end{proof}
The following two corollaries are weakenings of the previous proposition.
We see in particular that every definable metric space is ``almost locally definably compact".

\begin{thm}\label{cor:localhomeo}
Let $(X,d)$ be an arbitrary definable metric space.
Almost every $x\in X$ has a $d$-neighborhood $U$ such that $id:(U,d)\rightarrow (U,e)$ is a homeomorphism.
\end{thm}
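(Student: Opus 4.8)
The statement to prove is Theorem~\ref{cor:localhomeo}: for an arbitrary definable metric space $(X,d)$, almost every $x \in X$ has a $d$-neighborhood $U$ such that $\id : (U,d) \to (U,e)$ is a homeomorphism. The plan is to deduce this directly from Proposition~\ref{prp:productstructure}, which has just been proved. Indeed, Proposition~\ref{prp:productstructure} says that almost every $x \in X$ lies in a definable $e$- and $d$-open set $U \subseteq X$ admitting a definable homeomorphism $\phi : (U,d) \to (I^n,e) \times (D,d_{\di})$ for an open interval $I$, some $n \leqslant \dim(X)$, and a definable set $D$. The only obstacle is that when $D$ has more than one point, the factor $(D, d_{\di})$ is discrete, so $(U,d)$ need not be $e$-locally homeomorphic to $(U,e)$ on all of $U$ — we must restrict to a single ``slice''.

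\emph{Step 1: reduce to a single slice of the product.} Fix $x \in X$ in the full-measure set provided by Proposition~\ref{prp:productstructure}, with the homeomorphism $\phi : (U,d) \to (I^n,e)\times(D,d_{\di})$. Write $\phi(x) = (s, b)$ with $s \in I^n$ and $b \in D$. Since $(D, d_{\di})$ is discrete, the set $I^n \times \{b\}$ is $d_{\di}$-clopen in $I^n \times D$, hence its preimage $U_b := \phi^{-1}(I^n \times \{b\})$ is a $d$-open (and, since $\phi$ is a homeomorphism onto the product with its natural topology, also $e$-open by the product structure) neighborhood of $x$ in $X$, on which $\phi$ restricts to a definable homeomorphism $(U_b, d) \to (I^n, e)$.

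\emph{Step 2: identify $(U_b,e)$ with $(I^n,e)$.} Here one must be slightly careful: a priori $\phi$ is a homeomorphism for the $d$-topology on $U_b$, not for the $e$-topology. But in the construction of Proposition~\ref{prp:productstructure} (after applying Corollary~\ref{cor:contodef}) one works with a representative for which $\id : (X,d) \to (X,e)$ is continuous and every $e$-open set is $d$-open; and the map $\beta = \phi^{-1}$ restricted to $D \times I^n$ was chosen to be continuous between euclidean topologies, with $U = \beta(D \times I^n)$ an $e$-open subset of $X$ by the o-minimal open mapping theorem. The same open mapping theorem applied to the restriction $\beta|_{\{b\}\times I^n}$ shows that $U_b = \beta(\{b\} \times I^n)$ is $e$-open and that $\beta|_{\{b\}\times I^n} : (I^n, e) \to (U_b, e)$ is a homeomorphism; thus $\id : (U_b,d) \to (U_b,e)$ is, up to these identifications, the identity, hence a homeomorphism. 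To handle the general $(X,d)$ rather than the special representative, note that if $\tau : (X,d) \to (X',d')$ is a definable isometry and the conclusion holds for $(X',d')$, then it holds for $(X,d)$: a definable isometry is in particular a homeomorphism $(X,d) \to (X',d')$ which, at almost every point, is a homeomorphism $(X,e) \to (X',e)$ as well (this is exactly the kind of remark used at the start of the proof of Proposition~\ref{prp:productstructure} and follows from Corollary~\ref{cor:continuityidentity}(ii) applied to $\tau$ and $\tau^{-1}$), so it transports $d$-neighborhoods with the homeomorphism property back to $X$ on a full-measure set.

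\emph{Main obstacle.} The only real subtlety is Step 2: ensuring that the homeomorphism supplied by Proposition~\ref{prp:productstructure}, which is stated only for the $d$-topology, actually also witnesses a homeomorphism between the \emph{euclidean} topologies on the slice $U_b$. This is not automatic from the bare statement of Proposition~\ref{prp:productstructure} but is immediate from its proof (the map $\beta$ is continuous between euclidean topologies and $U$ is $e$-open via the open mapping theorem), together with the reduction via Corollary~\ref{cor:contodef} to a representative on which the $e$-topology refines to the $d$-topology. Once that is in hand the theorem is a one-line corollary: take $U := U_b$.
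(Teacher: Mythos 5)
Your proof is correct and matches the paper's intended derivation: the paper states this theorem without proof, calling it a ``weakening'' of Proposition~\ref{prp:productstructure}, and your restriction to a single slice $U_b=\phi^{-1}(I^n\times\{b\})$, justified by going back into the construction of $\beta$ (equivalently via Corollary~\ref{cor:continuityidentity}(iv) and Lemma~\ref{lem:openinsep}), is exactly the missing step. One harmless imprecision: when $n<\dim(X)$ the slice $U_b$ is not $e$-open in $X$ (only $e$-open relative to the ball $B_x$), but the theorem only asks for a $d$-neighborhood, so this does not affect the argument.
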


\begin{thm}\label{cor:localhomeo2}
Almost every $x\in X$ has a $d$-neighborhood which is definably homeomorphic to an open subset of some $R^k$.
\end{thm}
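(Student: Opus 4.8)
The plan is to obtain this immediately from Proposition~\ref{prp:productstructure}, which already supplies neighborhoods of exactly the required shape. First I would apply that proposition to get a definable $A \subseteq X$ that is almost all of $X$ and such that each $x \in A$ lies in a definable $e$- and $d$-open set $U = U(x) \subseteq X$ carrying a definable homeomorphism
$$ \phi : (U,d) \longrightarrow (I^n,e) \times (D,d_{\di}), $$
with $I$ an open interval, $n \leqslant \dim(X)$, and $D$ a definable set.

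Next I would exploit that $(D,d_{\di})$ is discrete: the product $(I^n,e) \times (D,d_{\di})$ is, as a topological space, the disjoint union of the clopen sets $I^n \times \{b\}$, $b \in D$. Transporting this decomposition through $\phi$, the set $U$ becomes the disjoint union of the sets $U_b := \phi^{-1}(I^n \times \{b\})$, each of which is $d$-open in $U$, hence $d$-open in $X$ since $U$ is $d$-open in $X$. Given $x \in A$, let $b$ be the unique element of $D$ with $x \in U_b$; then $U_b$ is a $d$-open neighborhood of $x$ and $\phi$ restricts to a definable homeomorphism $(U_b,d) \to (I^n,e)$. As $I^n$ is an open subset of $R^n$, this is precisely the neighborhood asked for.

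I do not expect a real obstacle here: all the substance lies in Proposition~\ref{prp:productstructure}, and the only thing to verify is that the slices $U_b$ are open in $X$, which is immediate from $U$ being $d$-open in $X$ together with the discreteness of $d_{\di}$. If one preferred not to invoke Proposition~\ref{prp:productstructure}, the statement could instead be derived from Theorem~\ref{cor:localhomeo} by shrinking the neighborhood it provides to a cell via cell decomposition and then using the standard o-minimal fact that a $k$-dimensional cell is definably homeomorphic to the open box $(0,1)^k \subseteq R^k$; but routing through Proposition~\ref{prp:productstructure} is shorter since it hands us a euclidean-open model directly.
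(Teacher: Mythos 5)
Your proposal is correct and follows the paper's own route: the paper presents Theorem~\ref{cor:localhomeo2} as an immediate weakening of Proposition~\ref{prp:productstructure}, and your argument simply makes explicit the (easy) step of passing to the clopen slice $\phi^{-1}(I^n\times\{b\})$ containing $x$. Nothing further is needed.
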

Note that we could have $k=0$.

\section{Topological Dimension of Definable Metric Spaces}\label{section:top}
\textit{In this section we assume that $\cR$ expands the ordered field of real numbers.}
We describe the topological dimension of a definable metric space $(X,d)$.
We first recall the definition and some facts about topological dimension.
The notion of topological dimension that we use is frequently called the ``small inductive dimension''.

\subsection{Topological Dimension}
In this subsection $X$ is a topological space.
We inductively define the topological dimension, $\dim_{\typ}(X)$, of $X$:
\begin{enumerate}
\item $\dim_{\typ}(X) = -1$ if and only if $X = \emptyset$.
\item $\dim_{\typ}(X) \leqslant n$ if and only if $X$ has a neighborhood basis consisting of sets whose boundaries have topological dimension at most $n - 1$,
\end{enumerate}
and $\dim_{\typ}(X) \in \mathbb{N}\cup\{\infty\}$ is the least $n$ such that $\dim_{\typ}(X) \leqslant n$.
We will need four facts about the topological dimension.
The first can be proven with an easy inductive argument:

\begin{fact}\label{fact:monotoncetopdim}
If $A \subseteq X$ then $\dim_{\typ}(A) \leqslant \dim_{\typ}(X)$.
\end{fact}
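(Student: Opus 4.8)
The plan is to prove the statement by induction on $\dim_{\typ}(X)$, taking as inductive hypothesis exactly the assertion of the fact one level down: whenever $\dim_{\typ}(Y) \leqslant n-1$ and $B \subseteq Y$, then $\dim_{\typ}(B) \leqslant n-1$. The base case is immediate: if $\dim_{\typ}(X) = -1$ then $X = \emptyset$, so $A = \emptyset$ and $\dim_{\typ}(A) = -1$.

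For the inductive step, suppose $\dim_{\typ}(X) \leqslant n$ and let $A \subseteq X$. I want to show that each $a \in A$ has, in the subspace topology on $A$, a neighborhood basis whose members have boundary in $A$ of topological dimension at most $n-1$. So fix $a \in A$ and a neighborhood $N$ of $a$ in $A$. By definition of the subspace topology there is an open $U \subseteq X$ with $a \in U$ and $U \cap A \subseteq N$. Since $\dim_{\typ}(X) \leqslant n$, there is an open $V \subseteq X$ with $a \in V \subseteq U$ whose boundary $\bd{V} = \cl(V) \setminus V$ in $X$ satisfies $\dim_{\typ}(\bd{V}) \leqslant n-1$. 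Then $V \cap A$ is a neighborhood of $a$ in $A$ contained in $N$.

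The one computation to carry out is that the boundary of $V \cap A$ in the subspace $A$ is contained in $\bd{V} \cap A$: since $V$ is open in $X$, the set $V \cap A$ is its own interior in $A$, while its closure in $A$ is $\cl(V \cap A) \cap A \subseteq \cl(V) \cap A$, so the $A$-boundary of $V \cap A$ lies in $(\cl(V) \cap A) \setminus (V \cap A) = \bd{V} \cap A \subseteq \bd{V}$. Applying the inductive hypothesis to this subset of the space $\bd{V}$ shows its topological dimension is at most $n-1$. As the sets $V \cap A$ so obtained form a neighborhood basis at $a$ in $A$, we conclude $\dim_{\typ}(A) \leqslant n$, completing the induction. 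I do not expect any substantive obstacle; the only point needing care is keeping the ambient and subspace boundary (and closure/interior) operators straight in the bookkeeping above.
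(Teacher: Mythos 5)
Your proof is correct, and it is precisely the ``easy inductive argument'' the paper alludes to (the paper states this as a fact without writing out the induction): the only point of substance is the inclusion of the subspace boundary of $V \cap A$ in $A$ into $\partial(V) \cap A$, which you verify correctly. Nothing further is needed.
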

The second is essentially trivial:

\begin{fact}\label{fact:disjointuniontopdim}
Let $\{ X_i : i \in I \}$ be a family of topological space and suppose $X$ is the disjoint union of the $X_i$.
Then
$$ \dim_{\typ}(X) = \sup \{ \dim_{\typ}(X_i) : i \in I \}.$$
\end{fact}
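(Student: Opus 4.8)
The plan is to prove the two inequalities $\dim_{\typ}(X) \geqslant \sup_i \dim_{\typ}(X_i)$ and $\dim_{\typ}(X) \leqslant \sup_i \dim_{\typ}(X_i)$ separately. The first is immediate: each $X_i$ is (a copy of) a subspace of the coproduct $X$, so Fact~\ref{fact:monotoncetopdim} gives $\dim_{\typ}(X_i) \leqslant \dim_{\typ}(X)$ for every $i \in I$, and hence $\sup_i \dim_{\typ}(X_i) \leqslant \dim_{\typ}(X)$. This already settles the equality whenever the right-hand side is $\infty$ --- in particular when some piece has infinite topological dimension, or when the $\dim_{\typ}(X_i)$ are unbounded --- so it remains only to show that if $n \in \N$ satisfies $\dim_{\typ}(X_i) \leqslant n$ for all $i$, then $\dim_{\typ}(X) \leqslant n$.

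For this I would argue directly from the recursive definition; no induction on $n$ is actually needed. The key observation is that in the disjoint union topology each $X_i$ is clopen in $X$. Consequently, if $x \in X$ and $X_{i_0}$ is the unique piece containing $x$, then: (a) $X_{i_0}$ is itself an open neighborhood of $x$, so the open neighborhoods of $x$ that are contained in $X_{i_0}$ form a neighborhood basis of $x$ in $X$; and (b) for any $V \subseteq X_{i_0}$ which is open (equivalently, open in $X$), its frontier computed in $X$ coincides with its frontier computed in $X_{i_0}$, because $\cl_X(V) \subseteq X_{i_0}$ when $X_{i_0}$ is closed, so $\cl_X(V) = \cl_{X_{i_0}}(V)$, and $\partial(V)$ is then literally the same set carrying the same subspace topology whether regarded inside $X$ or inside $X_{i_0}$.

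Now fix $x \in X$ and an open $U \subseteq X$ with $x \in U$. Then $U \cap X_{i_0}$ is an open neighborhood of $x$ in $X_{i_0}$, so $\dim_{\typ}(X_{i_0}) \leqslant n$ furnishes an open $V$ with $x \in V \subseteq U \cap X_{i_0} \subseteq U$ whose frontier in $X_{i_0}$ has topological dimension at most $n-1$; by (b), its frontier in $X$ is this same space and so also has topological dimension at most $n-1$. Hence every point of $X$ has a neighborhood basis of open sets with frontiers of topological dimension $\leqslant n-1$, which is precisely $\dim_{\typ}(X) \leqslant n$. (If one prefers to treat $n = -1$ separately it is trivial: $\dim_{\typ}(X_i) = -1$ for all $i$ means each $X_i = \emptyset$, hence $X = \emptyset$.) Taking $n = \sup_i \dim_{\typ}(X_i)$ when this is finite finishes the proof. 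The only genuine content is the clopen-ness remark of (a)--(b), which guarantees that frontiers, and therefore their topological dimensions, are computed consistently in $X$ and in the pieces; I do not anticipate any real obstacle.
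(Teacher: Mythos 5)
Your proof is correct. The paper states this fact without proof (labelling it ``essentially trivial''), and your argument --- monotonicity for the lower bound, plus the observation that each $X_i$ is clopen so that open neighborhoods inside a piece form a neighborhood basis and their frontiers are the same topological space whether computed in $X$ or in $X_i$ --- is exactly the routine verification the authors had in mind.
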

The last two require more effort, see \cite{Hurewicz-Wallman} for a full account.

\begin{fact}
The topological dimension of $\bR^n$ equipped with the euclidean topology is $n$.
\end{fact}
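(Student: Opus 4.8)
The statement is a classical theorem of dimension theory, and in the paper I would simply cite \cite{Hurewicz-Wallman}; but here is the shape of a proof. It splits into the upper bound $\dim_{\typ}(\bR^n)\leqslant n$ and the lower bound $\dim_{\typ}(\bR^n)\geqslant n$, and only the latter uses anything substantial.

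For the upper bound the plan is to induct on $n$. The case $n=0$ is immediate, since a point has the empty set as the boundary of its only neighborhood. For the inductive step, note that the open boxes $\prod_{i=1}^{n}(a_i,b_i)$ form a neighborhood basis at every point of $\bR^n$, and the boundary of such a box is a finite union of closed ``faces'', each homeomorphic to a closed box in $\bR^{n-1}$ and hence, by the inductive hypothesis together with Fact~\ref{fact:monotoncetopdim}, of topological dimension at most $n-1$. One then needs the finite sum theorem for topological dimension — a finite union of closed subspaces each of dimension $\leqslant n-1$ has dimension $\leqslant n-1$ — which holds for separable metric spaces; this is the one extra ingredient not recorded in the excerpt, and it is proven in \cite{Hurewicz-Wallman}. (Fact~\ref{fact:disjointuniontopdim} is not enough on its own, since the faces of a box overlap along lower-dimensional edges.) This yields $\dim_{\typ}(\bR^n)\leqslant n$.

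For the lower bound it suffices, by Fact~\ref{fact:monotoncetopdim} applied to $[0,1]^n\subseteq\bR^n$, to show $\dim_{\typ}([0,1]^n)\geqslant n$. Here I would use the standard partition criterion: if $\dim_{\typ}(Y)\leqslant n-1$, then for any $n$ pairs of disjoint closed sets $(A_i,B_i)$ in $Y$ one can choose, for each $i$, a closed set $C_i$ separating $A_i$ from $B_i$ with $\bigcap_{i=1}^{n}C_i=\emptyset$. Taking $Y=[0,1]^n$ and $(A_i,B_i)$ the $i$-th pair of opposite faces, I would derive a contradiction from the fact that such separating sets cannot have empty common intersection — a cube / Poincaré--Miranda-type statement equivalent to Brouwer's fixed point theorem (equivalently, to the nonexistence of a retraction $[0,1]^n\to\partial[0,1]^n$).

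The main obstacle is precisely this last point: the inequality $\dim_{\typ}(\bR^n)\geqslant n$ cannot be obtained by elementary manipulation of the inductive definition of small inductive dimension and requires genuine topological input — Brouwer's theorem, or equivalently Sperner's lemma, the Lebesgue covering theorem, or the KKM lemma. Everything else is bookkeeping. Since all of this is entirely standard, in the paper itself I would just invoke \cite{Hurewicz-Wallman}.
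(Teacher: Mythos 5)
Your proposal is correct, and it matches the paper's treatment: the paper offers no proof of this fact, simply deferring to \cite{Hurewicz-Wallman}, exactly as you say you would do. Your sketch of the standard argument (finite sum theorem for the upper bound, the separator criterion plus Brouwer for the lower bound) is accurate and correctly identifies where the genuine topological input lies.
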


\begin{fact}\label{fact:closeduniontopdim}
Let $\{ F_i  : i \in \mathbb{N} \}$ be a family of closed sets which cover $X$.
Then 
$$ \dim_{\typ}(X) = \sup \{ \dim_{\typ}(F_i) : i \in \mathbb{N} \}. $$
\end{fact}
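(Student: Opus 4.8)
The plan is to prove the two inequalities separately, the second being the whole point. The inequality $\sup_{i}\dim_{\typ}(F_i)\leqslant\dim_{\typ}(X)$ is immediate from Fact~\ref{fact:monotoncetopdim}: each $F_i$ is a subspace of $X$, so $\dim_{\typ}(F_i)\leqslant\dim_{\typ}(X)$, and one takes the supremum over $i$. The reverse inequality is the classical \emph{countable closed sum theorem}, and I would argue as in Hurewicz--Wallman, by induction on $n:=\sup_{i}\dim_{\typ}(F_i)$ (the case $n=\infty$ being vacuous).

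The base case $n=-1$ is trivial, since then every $F_i$, and hence $X$, is empty. For the inductive step it suffices, by the definition of $\dim_{\typ}$, to show that every $x\in X$ has a neighbourhood basis of open sets $V$ with $\dim_{\typ}(\partial V)\leqslant n-1$. So fix $x$ and an open $U\ni x$. The heart of the argument is to produce an open $V$ with $x\in V\subseteq\overline{V}\subseteq U$ whose boundary meets each $F_i$ in a set of topological dimension at most $n-1$.

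This $V$ is built by a recursion on $i$. Having controlled $\partial V$ on $F_1,\dots,F_{i-1}$, one uses $\dim_{\typ}(F_i)\leqslant n$ to find, inside the metric space $F_i$, a partition of dimension $\leqslant n-1$ separating the ``inner'' part from the ``outer'' part of the construction so far, and then extends this partition to all of $X$ using normality of $X$ (metric spaces are perfectly normal) together with a shrinking argument, taking care that the extension is coherent with the stages already performed. In the limit one obtains $V$ as required, and then $\partial V=\bigcup_{i}(\partial V\cap F_i)$ is a countable union of closed subsets of $\partial V$, each of topological dimension $\leqslant n-1$; applying the inductive hypothesis to $\partial V$ gives $\dim_{\typ}(\partial V)\leqslant n-1$. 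As $x$ and $U$ were arbitrary, $\dim_{\typ}(X)\leqslant n$, which completes the induction.

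The main obstacle is exactly this coherent recursive construction of $V$: one must interleave the separation step inside $F_i$ with the extension step so that treating $F_i$ does not spoil the bound on $\partial V\cap F_j$ for $j<i$, and this is where the countability of the cover and perfect normality are genuinely used; the bookkeeping is carried out in full in \cite{Hurewicz-Wallman}. Alternatively, since the spaces of interest here are metric, one may invoke the coincidence of the small inductive dimension with the covering dimension for metric spaces and quote the sum theorem for the covering dimension directly.
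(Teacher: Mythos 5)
The paper gives no proof of this Fact; it is quoted from \cite{Hurewicz-Wallman}, so the only thing to compare against is that reference, and your sketch is precisely the standard countable closed sum theorem argument from it: the easy inequality via Fact~\ref{fact:monotoncetopdim}, then induction on $n=\sup_i\dim_{\typ}(F_i)$ with the recursive construction of a separating open set whose boundary meets each $F_i$ in dimension at most $n-1$. The outline is correct, with the hard bookkeeping deferred to the reference exactly as the paper itself does. One point worth making explicit: the sum theorem for the small inductive dimension is \emph{false} for arbitrary topological spaces, so the metrizability (indeed separable metrizability, under which the small inductive and covering dimensions coincide) that you invoke implicitly via perfect normality is a genuine hypothesis; the Fact as stated for a bare topological space should be read with that restriction, which is harmless here since it is only ever applied to separable metric spaces.
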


\subsection{Topological Dimension of Definable Metric Spaces}
In this subsection $(X,d)$ is a definable metric space.
\begin{thm}\label{cor:lasttopdim}
The topological dimension of $(X,d)$ is the maximal $m$ for which there is a definable continuous injection $(I^m,e) \rightarrow (X,d)$ for an open interval $I \subseteq \mathbb{R}$.
\end{thm}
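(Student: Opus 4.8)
The plan is to establish the two inequalities separately. For the lower bound, one observes that if there is a definable continuous injection $\iota : (I^m,e) \rightarrow (X,d)$ then $\iota$ restricted to a closed subcube $[a,b]^m \subseteq I^m$ is a continuous injection from a compact space, hence a homeomorphism onto its image; thus $(X,d)$ contains a subspace homeomorphic to $[a,b]^m$, and Fact~\ref{fact:monotoncetopdim} together with the fact that $\dim_{\typ}([a,b]^m)=m$ gives $\dim_{\typ}(X,d)\geqslant m$. So the maximal such $m$ is a lower bound for $\dim_{\typ}(X,d)$; it also shows the quantity is well-defined and at most $\dim(X)$, since a definable continuous injection from $I^m$ forces $m\leqslant\dim(X)$.

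For the upper bound, I would use Proposition~\ref{prp:productstructure}. Apply it to get a $d$-open dense $U_0\subseteq X$ every point of which has a $d$-neighborhood definably homeomorphic to $(I^n,e)\times(D,d_{\di})$ with $n\leqslant\dim(X)$. Since $D$ with the discrete metric is a disjoint union of points, such a neighborhood is a disjoint union of copies of $(I^n,e)$, so by Fact~\ref{fact:disjointuniontopdim} it has topological dimension $n\leqslant\dim(X)$. Now stratify: let $X_1 = X$, and having defined $X_i$, let $U_i$ be the $d$-open dense subset of $X_i$ supplied by Proposition~\ref{prp:productstructure} applied to $(X_i,d)$, and set $X_{i+1}=X_i\setminus U_i$, a definable set of strictly smaller o-minimal dimension. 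This terminates after at most $\dim(X)+1$ steps. Each $U_i$ is locally a disjoint union of euclidean boxes of dimension $\leqslant\dim(X)$, hence $\dim_{\typ}(U_i)\leqslant\dim(X)$; more precisely I want $\dim_{\typ}(U_i)$ to be at most the largest $n$ occurring, and then to bound the latter by the maximal $m$ with a continuous injection $(I^m,e)\rightarrow(X,d)$ — note that a coordinate box $\beta(\{b\}\times I^n)\subseteq U_i\subseteq X$ is exactly such an injection (composed with the inclusion $X_i\hookrightarrow X$, which is a homeomorphism onto a subspace), so $n\leqslant m$ for every stratum. The $X_i$ are not closed in $X$, so Fact~\ref{fact:closeduniontopdim} does not apply directly; instead one writes $X = \bigsqcup_i (X_i\setminus X_{i+1})$ (finitely many pieces) — but this is not a closed cover either. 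The fix is to use cell decomposition: refine the stratification so each piece $X_i\setminus X_{i+1}$ is a finite union of cells, observe that a cell is relatively open in its closure inside the next lower stratum, and run an induction on $\dim(X)$ combining Fact~\ref{fact:closeduniontopdim} applied to $\{\cl_d(C)\}$ over the finitely many top-dimensional cells $C$ together with the lower-dimensional frontier, using Lemma~\ref{lem:dimineq} to control the frontier. Alternatively, and more cleanly: induct on $\dim(X)$; the dense open $U_0$ has $\dim_{\typ}\leqslant m$ by the box description, the complement $X\setminus U_0$ has smaller o-minimal dimension so by induction $\dim_{\typ}(X\setminus U_0)\leqslant m$ as well (any box injecting into $X\setminus U_0$ injects into $X$), and then — since $X\setminus U_0$ is $d$-closed in $X$ — I would like a two-element version of Fact~\ref{fact:closeduniontopdim} for the cover $\{\cl_d(U_0),\, X\setminus U_0\}$, noting $\cl_d(U_0)\setminus U_0$ also has smaller o-minimal dimension.

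The main obstacle I anticipate is precisely this last bookkeeping: the decomposition produced by Proposition~\ref{prp:productstructure} is into a dense open piece plus a lower-dimensional remainder, which is an open/closed decomposition rather than a closed cover, so Fact~\ref{fact:closeduniontopdim} must be applied to closures and one must check the frontier contributes nothing new. This is handled by Lemma~\ref{lem:dimineq} (the $d$-frontier of a definable set has strictly smaller o-minimal dimension) feeding the induction on $\dim(X)$, so that at each stage the "boundary" sets that appear already satisfy the desired topological-dimension bound by the inductive hypothesis. Everything else — the lower bound, and the identification of each local piece's topological dimension via Facts~\ref{fact:disjointuniontopdim} and the computation $\dim_{\typ}(\bR^n)=n$ — is routine.
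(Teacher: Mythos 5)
Your lower bound is fine and is in fact slightly more careful than the paper's: the paper simply writes $\dim_{\typ}(g(I^m),d)=\dim_{\typ}(I^m,e)$ for a continuous injection $g$, whereas you restrict to a compact subcube to get a genuine homeomorphism onto the image before invoking Fact~\ref{fact:monotoncetopdim}. The upper bound also follows the paper's strategy (Proposition~\ref{prp:productstructure} plus induction on $\dim(X)$), but there is a genuine gap in the gluing step, which you correctly sense but do not close. Two things are left unresolved. First, the passage from ``every point of $U_0$ has a neighborhood of topological dimension $\leqslant m$'' to ``$\dim_{\typ}(U_0)\leqslant m$'' is asserted, not proved; none of Facts~\ref{fact:monotoncetopdim}--\ref{fact:closeduniontopdim} gives locality of $\dim_{\typ}$ for open covers. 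Second, and more seriously, your proposed fix for combining the open dense piece with the lower-dimensional remainder does not terminate: applying a ``two-element sum theorem'' to $\{\cl_d(U_0),\,X\setminus U_0\}$ requires $\dim_{\typ}(\cl_d(U_0))\leqslant m$, but $\cl_d(U_0)=U_0\cup\partial_d(U_0)$ is again an open-plus-lower-dimensional decomposition of the same kind, so the problem recurses rather than reduces. (The statement ``$F$ closed, $\dim_{\typ}F\leqslant n$, $\dim_{\typ}(X\setminus F)\leqslant n$ implies $\dim_{\typ}X\leqslant n$'' is true for separable metric spaces, but it is not a consequence of Fact~\ref{fact:closeduniontopdim} applied to a two-element cover; its proof is precisely the argument you are missing.)

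The missing ingredient is separability, which is where the standing hypothesis that $\cR$ expands the real field enters. Since $(U,e)$ is a separable subset of euclidean space, it can be covered by countably many definable $e$-closed sets $F_i$ (closed in $X$, e.g.\ closed balls), each contained in a single product neighborhood $V$ as in Proposition~\ref{prp:productstructure}. Then $\dim_{\typ}(F_i,d)\leqslant\dim_{\typ}(V,d)=\dim(C)<m$ by Fact~\ref{fact:monotoncetopdim} and Fact~\ref{fact:disjointuniontopdim} --- no local-to-global argument on $U$ is needed --- and $X=A\cup\bigcup_i F_i$ is a countable cover by closed sets each of topological dimension $<m$ (the piece $A$ being handled by the induction on $\dim(X)$), so Fact~\ref{fact:closeduniontopdim} applies once, to the whole cover. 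This single use of the countable closed sum theorem simultaneously disposes of both difficulties in your sketch; with that substitution your argument becomes the paper's proof.
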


\begin{proof}
Suppose $g: (I^m, e) \rightarrow (X,d)$ is a definable continuous injection.
Then
$$ \dim_{\typ}(X,d) \geqslant \dim_{\typ}(g(I^m),d) = \dim_{\typ}(I^m,e) = m. $$
We now suppose that there is no definable continuous injection $(I^m,e) \rightarrow (X,d)$ and show that $\dim_{\typ}(X,d) < m$.
We apply induction to $\dim(X)$.
Therefore we use Proposition~\ref{prp:productstructure} to partition $(X,d)$ into definable sets $U,A$ such that:
\begin{enumerate}
\item $U$ is $e$-open and $\dim(A) < \dim(X)$.
\item Every point in $U$ has an $e$-neighborhood $V$ which admits a definable homeomorphism
$$ (V,d) \rightarrow (C,e) \times (D,d_{\di}) $$ 
for definable sets $C,D$ with $\dim(C) < m$.
\end{enumerate}
There is no definable continuous injection $(I^m,e) \rightarrow (A,d)$, so the inductive assumption implies that $\dim_{\typ}(A,d) < m$.
As $(U,e)$ is separable we can cover $U$ with countably many $e$-closed definable sets $\{ F_i : i \in \mathbb{N} \}$ such that each $F_i$ is contained in an $e$-open $V\subseteq X$ which satisfies the conditions on $V$ in the statement.
If $V$ satisfies the conditions in the statement by Fact~\ref{fact:disjointuniontopdim} then $\dim_{\typ}(V,d) = \dim(C) < m$, so $\dim_{\typ}(F_i,d) < m$.
As
$$ X = A \cup \bigcup_{ i \in \mathbb{N} } F_i, $$
Fact~\ref{fact:closeduniontopdim} gives $\dim_{\typ}(X,d) \leqslant m$.
\end{proof}
As an immediate consequence we have:
\begin{cor}\label{cor:topdimhaussanalaogue}
$\dim_{\typ}(X,d) \leqslant \dim(X)$.
\end{cor}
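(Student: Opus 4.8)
The plan is to derive this directly from Theorem~\ref{cor:lasttopdim}, with essentially no further work. That theorem identifies $\dim_{\typ}(X,d)$ with the maximal $m$ for which there is a definable continuous injection $(I^m,e) \to (X,d)$ for an open interval $I \subseteq \mathbb{R}$. So I would fix such a maximal $m$ together with a witnessing injection $g \colon (I^m,e) \to (X,d)$, and it then suffices to prove the single inequality $m \leqslant \dim(X)$.

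For that, I would observe that $g(I^m)$ is a definable subset of $X$, hence $\dim(g(I^m)) \leqslant \dim(X)$, and that a definable injection preserves o-minimal dimension, so $\dim(g(I^m)) = \dim(I^m) = m$; this is the standard invariance of o-minimal dimension under definable bijections, applied to $g$ viewed as a bijection onto its image (see \cite{vdD}). Chaining these estimates gives $m \leqslant \dim(X)$, which is exactly the assertion $\dim_{\typ}(X,d) \leqslant \dim(X)$.

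The only subtlety worth a remark is that the statement of Theorem~\ref{cor:lasttopdim} presupposes that $\dim_{\typ}(X,d)$ is finite, so that ``the maximal $m$'' makes sense; but this is already part of the content of that theorem, and if one wishes to be self-contained the same computation shows that no definable continuous injection $(I^m,e)\to(X,d)$ can exist once $m>\dim(X)$, which together with the lower-bound half of Theorem~\ref{cor:lasttopdim} and Fact~\ref{fact:monotoncetopdim} rules out $\dim_{\typ}(X,d)=\infty$. I do not anticipate any genuine obstacle here: this corollary is immediate once Theorem~\ref{cor:lasttopdim} is in hand, and the only imported ingredient is the elementary fact that definable maps cannot raise o-minimal dimension and injective ones preserve it.
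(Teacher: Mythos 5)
Your proof is correct and matches the paper's intent exactly: the paper states this as an immediate consequence of Theorem~\ref{cor:lasttopdim}, with the implicit argument being precisely yours, namely that a definable continuous injection $(I^m,e)\to(X,d)$ forces $m=\dim(g(I^m))\leqslant\dim(X)$ by invariance of o-minimal dimension under definable injections. Your remark about finiteness of $\dim_{\typ}(X,d)$ is a reasonable extra care that the paper leaves tacit.
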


\bibliographystyle{uclathes}
\bibliography{bilio}

\end{document}